\theoremstyle{plain}
\newtheorem{theorem}[equation]{Theorem}
\newtheorem{lemma}[equation]{Lemma}
\newtheorem{corollary}[equation]{Corollary}
\newtheorem{proposition}[equation]{Proposition}
\theoremstyle{definition}
\newtheorem{definition}[equation]{Definition}
\theoremstyle{remark}
\newtheorem{remark}[equation]{Remark}
\numberwithin{equation}{section}
\newcommand{\eps}{\varepsilon}
\newcommand{\dint}{\int\!\!\!\int}
\newcommand{\dist}{\operatorname{dist}}
\newcommand{\tomt}{\tilde{\omega}_{\star}}
\newcommand{\re}{\mathbb{R}}
\newcommand{\ree}{\mathbb{R}^{n+1}}
\newcommand{\dd}{\mathbb{D}}
\newcommand{\A}{\mathbb{A}}
\newcommand{\F}{\mathcal{F}}
\newcommand{\W}{\mathcal{W}}
\newcommand{\T}{\mathcal{T}}
\newcommand{\pom}{\partial\Omega}
\newcommand{\hm}{\omega}
\renewcommand{\P}{\mathcal{P}}
\renewcommand{\emptyset}{\mbox{\textup{\O}}}
\DeclareMathOperator{\supp}{supp}
\DeclareMathOperator{\diam}{diam}
\DeclareMathOperator{\interior}{int}
\def\div{\mathop{\operatorname{div}}\nolimits}
\begin{document}
\allowdisplaybreaks

\title[Uniform rectifiability and harmonic measure II]{Uniform rectifiability and harmonic measure II: Poisson kernels in $L^p$ imply uniform rectifiability}

\author{Steve Hofmann}

\address{Steve Hofmann
\\
Department of Mathematics
\\
University of Missouri
\\
Columbia, MO 65211, USA} \email{hofmanns@missouri.edu}

\author{Jos\'{e} Mar\'{\i}a Martell}
\address{Jos\'{e} Mar\'{\i}a Martell
\\
Instituto de Ciencias Matem\'{a}ticas CSIC-UAM-UC3M-UCM
\\
Consejo Superior de Investigaciones Cient\'{\i}ficas
\\
C/ Nicol\'{a}s Cabrera, 13-15
\\
E-28049 Madrid, Spain} \email{chema.martell@icmat.es}

\author{Ignacio Uriarte-Tuero}

\address{Ignacio Uriarte-Tuero
\\
Department of Mathematics
\\
Michigan State University
\\
East Lansing, MI 48824, USA}
\email{ignacio@math.msu.edu}

\thanks{The first author was supported by NSF grant DMS-0801079.
The second author was supported by MINECO Grant MTM2010-16518 and ICMAT Severo Ochoa project SEV-2011-0087.
\\
\indent
This work has been possible thanks to the support and hospitality of the \textit{University of Missouri-Columbia} (USA),  the \textit{Consejo Superior de Investigaciones Cient\'{\i}ficas} (Spain), the \textit{Universidad Aut\'{o}noma de Madrid} (Spain), and the \textit{Australian National University} (Canberra, Australia). The first two authors would like to express their gratitude to these institutions.
}

\date{\today}
\subjclass[2010]{31B05, 35J08, 35J25, 42B99, 42B25, 42B37}

\keywords{Harmonic measure, Poisson kernel, uniform rectifiability,
Carleson measures, $A_\infty$ Muckenhoupt weights.}

\begin{abstract}
We present the converse to a
higher dimensional, scale-invariant version of a classical theorem of F. and M. Riesz
\cite{Rfm}.
More precisely, for $n\geq 2$,
for an ADR domain $\Omega\subset \re^{n+1}$ which satisfies the Harnack Chain condition plus
an interior (but not exterior) Corkscrew condition, we show that absolute continuity
of harmonic measure with respect to surface measure on $\pom$, with scale invariant
higher integrability of the Poisson kernel, is sufficient to imply uniformly rectifiable  of $\pom$.
\end{abstract}

\maketitle

\tableofcontents

\section{Introduction}

This paper is a sequel to the work of the first two named authors \cite {HM-I}, in which we
have presented
a higher dimensional, scale invariant version of the classical theorem of F. and M. Riesz \cite{Rfm}.
The F. and M. Riesz Theorem states that for a
simply connected domain $\Omega$ in the complex plane, with a rectifiable boundary, one has that harmonic measure is absolutely continuous with respect to arclength measure on the boundary.
In \cite{HM-I}, we proved a scale invariant version of the latter result in higher dimensions.
That is, we showed that for $\Omega\subset \ree,\, n\geq 2$, satisfying certain quantitative topological
properties, whose boundary is rectifiable in an appropriate quantitative sense, one has that harmonic measure satisfies a scale invariant version of absolute continuity with respect to surface measure
(the so-called ``weak-$A_\infty$"
property;  cf. Definition \ref{def1.ainfty} below).   To be more precise, assuming that
$\Omega$ satisfies interior ``Corkscrew" and ``Harnack Chain" conditions (these
are scale invariant versions of the topological properties of openness and path connectedness;
cf. Definitions \ref{def1.cork} and \ref{def1.hc} below), and that $\partial \Omega$ is ``Uniformly Rectifiable"  (a quantitative, scale invariant version of rectifiability;  cf. Definition
\ref{def1.UR}), we showed that harmonic measure belongs to weak-$A_\infty$ with respect to surface measure on $\pom$.  Let us note that the weak-$A_\infty$ property implies that the Poisson kernel
(i.e., the Radon-Nikodym derivative of harmonic measure with respect to surface measure),
satisfies a scale-invariant higher integrability condition (cf. \eqref{eq1.lq}.)

In the present paper, we obtain a converse to the main result of \cite{HM-I}, that is,
we show that if $\Omega\subset \ree$ satisfies
interior Corkscrew and Harnack Chain conditions, if $\pom$ is $n$-dimensional
``Ahlfors-David Regular"  (cf.  Definition \ref{def1.ADR}), and if harmonic measure is absolutely continuous with respect to surface measure, with Poisson kernel satisfying the scale invariant higher integrability condition \eqref{eq1.lq}, then $\pom$ is Uniformly Rectifiable
(cf. Theorem \ref{theorem:converse}).   We observe that this result is in the spirit of
the solution of the Painlev\'{e} problem (\cite{T}, but see also \cite{Ch},  \cite{MMV},
\cite{D} and \cite {Vo}),
in which analytic information is used to establish rectifiability properties of a set, via the use of $Tb$ theory.  In our case, we use a so called ``local $Tb$" theorem, related to the technology of the solution of the Kato square root problem \cite{HMc}, \cite{HLMc}, \cite{AHLMcT}, and proved in \cite{GM}.

We observe that our work here and in \cite{HM-I}
may be viewed as a ``large constant'' analogue of the series of papers by
Kenig and Toro \cite{KT1,KT2,KT3}.   These papers say that in the presence of a Reifenberg flatness
condition and Ahlfors-David regularity,  $\log k \in VMO$ iff $\nu \in VMO$, where $k$ is the Poisson kernel with pole at some fixed point, and
$\nu$ is the unit normal to the boundary.  Moreover, given the same background hypotheses,
the condition that $\nu \in VMO$ is equivalent to
a uniform rectifiability (UR) condition with vanishing trace,
thus $\log k \in VMO \iff vanishing \,\, UR.$  On the other hand, our
large constant version ``almost'' says  ``$\,\log k \in BMO\iff UR\,$'',
given interior Corkscrews and Harnack Chains.
Indeed,  it is well known that the $A_\infty$ condition
(i.e.,  weak-$A_\infty$ plus the doubling property) implies that $\log k \in BMO$, while
if $\log k \in BMO$ with small norm, then $k\in A_\infty$.

We also point out that another antecedent of our main result here has appeared
in the work of Lewis and Vogel \cite{LV}.  The main emphasis of the latter pair of authors concerns
so called ``$p$-harmonic measure" (the analogue of harmonic measure corresponding to the $p$-Laplacian), but specializing their results to the classical Laplacian, they show that for a domain
$\Omega$, for which harmonic measure $\omega$
is an Ahlfors-David regular measure on $\partial \Omega$
(thus, the Poisson kernel $k =d\omega/d\sigma$ is bounded between two positive constants), then
$\partial \Omega$ is uniformly rectifiable.    The assumption that $k\approx 1$ is of course the strongest form of the higher integrability/(weak) reverse H\"{o}lder conditions that we consider here.

We refer the reader to the introduction of \cite{HM-I}, for a detailed historical survey
of related work.

\subsection{Notation and Definitions}

\begin{list}{$\bullet$}{\leftmargin=0.4cm  \itemsep=0.2cm}

\item We use the letters $c,C$ to denote harmless positive constants, not necessarily
the same at each occurrence, which depend only on dimension and the
constants appearing in the hypotheses of the theorems (which we refer to as the
``allowable parameters'').  We shall also
sometimes write $a\lesssim b$ and $a \approx b$ to mean, respectively,
that $a \leq C b$ and $0< c \leq a/b\leq C$, where the constants $c$ and $C$ are as above, unless
explicitly noted to the contrary.  At times, we shall designate by $M$ a particular constant whose value will remain unchanged throughout the proof of a given lemma or proposition, but
which may have a different value during the proof of a different lemma or proposition.

\item Given a domain $\Omega \subset \ree$, we shall
use lower case letters $x,y,z$, etc., to denote points on $\partial \Omega$, and capital letters
$X,Y,Z$, etc., to denote generic points in $\ree$ (especially those in $\ree\setminus \partial\Omega$).

\item The open $(n+1)$-dimensional Euclidean ball of radius $r$ will be denoted
$B(x,r)$ when the center $x$ lies on $\partial \Omega$, or $B(X,r)$ when the center
$X \in \ree\setminus \partial\Omega$.  A ``surface ball'' is denoted
$\Delta(x,r):= B(x,r) \cap\partial\Omega.$

\item Given a Euclidean ball $B$ or surface ball $\Delta$, its radius will be denoted
$r_B$ or $r_\Delta$, respectively.

\item Given a Euclidean or surface ball $B= B(X,r)$ or $\Delta = \Delta(x,r)$, its concentric
dilate by a factor of $\kappa >0$ will be denoted
by $\kappa B := B(X,\kappa r)$ or $\kappa \Delta := \Delta(x,\kappa r).$

\item For $X \in \ree$, we set $\delta(X):= \dist(X,\partial\Omega)$.

\item We let $H^n$ denote $n$-dimensional Hausdorff measure, and let
$\sigma := H^n\big|_{\partial\Omega}$ denote the ``surface measure'' on $\partial \Omega$.

\item For a Borel set $A\subset \ree$, we let $1_A$ denote the usual
indicator function of $A$, i.e. $1_A(x) = 1$ if $x\in A$, and $1_A(x)= 0$ if $x\notin A$.

\item For a Borel set $A\subset \ree$,  we let $\interior(A)$ denote the interior of $A$.
If $A\subset \partial\Omega$, then $\interior(A)$ will denote the relative interior, i.e., the largest relatively open set in $\partial\Omega$ contained in $A$.  Thus, for $A\subset \partial\Omega$,
the boundary is then well defined by $\partial A := \overline{A} \setminus {\rm int}(A)$.

\item For a Borel set $A$, we denote by $\mathcal{C}(A)$ the space of continuous functions on
$A$, by $\mathcal{C}_c(A)$ the subspace of $\mathcal{C}(A)$
with compact support in $A$, and by $\mathcal{C}_b(A)$ the
space of bounded continuous functions on $A$.  If $A$ is unbounded, we denote by
$C_0(A)$ the space of continuous functions on $A$ converging to $0$ at infinity.

\item For a Borel subset $A\subset\partial\Omega$, we
set $\fint_A f d\sigma := \sigma(A)^{-1} \int_A f d\sigma$.

\item We shall use the letter $I$ (and sometimes $J$)
to denote a closed $(n+1)$-dimensional Euclidean cube with sides
parallel to the co-ordinate axes, and we let $\ell(I)$ denote the side length of $I$.
We use $Q$ to denote a dyadic ``cube''
on $\partial \Omega$.  The
latter exist, given that $\partial \Omega$ is ADR  (cf. \cite{DS1}, \cite{Ch}), and enjoy certain properties
which we enumerate in Lemma \ref{lemmaCh} below.

\end{list}

\begin{definition} ({\bf Corkscrew condition}).  \label{def1.cork}
Following
\cite{JK}, we say that a domain $\Omega\subset \ree$
satisfies the ``Corkscrew condition'' if for some uniform constant $c>0$ and
for every surface ball $\Delta:=\Delta(x,r),$ with $x\in \partial\Omega$ and
$0<r<\diam(\partial\Omega)$, there is a ball
$B(X_\Delta,cr)\subset B(x,r)\cap\Omega$.  The point $X_\Delta\subset \Omega$ is called
a ``Corkscrew point'' relative to $\Delta.$  We note that  we may allow
$r<C\diam(\pom)$ for any fixed $C$, simply by adjusting the constant $c$.
\end{definition}

\begin{remark}\label{remark1.2}
We note that, on the other hand, every $X\in\Omega$, with $\delta(X)<\diam(\pom)$,
may be viewed as a Corkscrew point,
relative to some surface ball $\Delta\subset\pom$.
Indeed, set $r=K \delta(X)$,  with $K>1$, fix
$x\in\pom$ such that $|X-x|=\delta(X)$, and let
$\Delta:=\Delta(x,r)$.
\end{remark}

\begin{definition}({\bf Harnack Chain condition}).  \label{def1.hc} Again following \cite{JK}, we say
that $\Omega$ satisfies the Harnack Chain condition if there is a uniform constant $C$ such that
for every $\rho >0,\, \Lambda\geq 1$, and every pair of points
$X,X' \in \Omega$ with $\delta(X),\,\delta(X') \geq\rho$ and $|X-X'|<\Lambda\,\rho$, there is a chain of
open balls
$B_1,...,B_N \subset \Omega$, $N\leq C(\Lambda)$,
with $X\in B_1,\, X'\in B_N,$ $B_k\cap B_{k+1}\neq \emptyset$
and $C^{-1}\diam (B_k) \leq \dist (B_k,\partial\Omega)\leq C\diam (B_k).$  The chain of balls is called
a ``Harnack Chain''.
\end{definition}

We remark that the Corkscrew condition is a quantitative, scale invariant
version of the fact that $\Omega$ is open, and the Harnack Chain condition is a
scale invariant version of path connectedness.


\begin{definition}\label{def1.ADR}
({\bf Ahlfors-David regular}). We say that a closed set $E \subset \ree$ is $n$-dimensional ADR (or simply ADR) if
there is some uniform constant $C$ such that
\begin{equation} \label{eq1.ADR}
\frac1C\, r^n \leq H^n(E\cap B(x,r)) \leq C\, r^n,\,\,\,\forall r\in(0,R_0),x \in E,\end{equation}
where $R_0$ is the diameter of $E$ (which may be infinite).   When $E=\partial \Omega$,
the boundary of a domain $\Omega$, we shall sometimes for convenience simply
say that ``$\Omega$ has the ADR property'' to mean that $\partial \Omega$ is ADR.
\end{definition}

\begin{definition}\label{def1.UR}
({\bf Uniform Rectifiability}). Following
David and Semmes \cite{DS1,DS2}, we say
that a closed set $E\subset \ree$ is    $n$-dimensional UR (or simply UR) (``Uniformly Rectifiable''), if
it satisfies the ADR condition \eqref{eq1.ADR}, and if for some uniform constant $C$ and for every
Euclidean ball $B:=B(x_0,r), \, r\leq \diam(E),$ centered at any point $x_0 \in E$, we have the Carleson measure estimate
\begin{equation}\label{eq1.sf}\dint_{B}
|\nabla^2 \mathcal{S}1(X)|^2 \,\dist(X,E) \,dX \leq C r^n,
\end{equation}
where $\mathcal{S}f$ is the single layer potential of $f$, i.e.,
\begin{equation}\label{eq1.layer}
\mathcal{S}f(X) :=c_n\, \int_{E} |X-y|^{1-n} f(y) \,dH^n(y).
\end{equation}
Here, the normalizing constant $c_n$ is chosen so that
$\mathcal{E}(X) := c_n |X|^{1-n}$ is the usual fundamental solution for the Laplacian
in $\ree.$  When $E=\partial \Omega$,
the boundary of a domain $\Omega$, we shall sometimes for convenience simply
say that ``$\Omega$ has the UR property'' to mean that $\partial \Omega$ is UR.
\end{definition}

We note that there are numerous characterizations of uniform rectifiability
given in \cite{DS1,DS2}. Let us note that, by ``$T1$ reasoning'', the Carleson measure condition \eqref{eq1.sf} is
equivalent to the global $L^2$ bound
\begin{equation}\label{eqA.6a}
\dint_{\ree}|\nabla^2\mathcal{S}f (X)|^2\,\delta(X)\, dX
\,\leq\, C\,\|f\|^2_{L^2(\partial\Omega)}.
\end{equation}
The condition \eqref{eq1.sf} will be most useful for our purposes, and
appears in \cite[Chapter 3,  Part III]{DS2}.
We remark that the UR sets
are precisely those for which all ``sufficiently nice'' singular integrals
are bounded on $L^2$ (see \cite{DS1}).

\begin{definition}\label{def1.bp} {\bf (``Big Pieces'')}.
Given  a closed set $E\subset\ree$ such that $E$ is $n$-dimensional ADR, and a collection $\mathcal{S}$ of domains in $\ree$, we say that
$E$ has ``big pieces of boundaries of $\mathcal{S}\,$'' (denoted
$E\in BP(\partial\mathcal{S})$)
if there is a constant $0<\alpha\le 1$ such that for
every $x\in E$, and $0<r<\diam(E)$, there is a domain $\Omega' \in \mathcal{S}$ such that
$$
H^n(\pom'\cap B(x,r)\cap E) \geq \alpha H^n(B(x,r)\cap E) \approx \alpha r^n.
$$
\end{definition}

\begin{lemma}\label{lemmaCh}({\bf Existence and properties of the ``dyadic grid''})
\cite{DS1,DS2}, \cite{Ch}.
Suppose that $E\subset \ree$ satisfies
the ADR condition \eqref{eq1.ADR}.  Then there exist
constants $ a_0>0,\, \eta>0$ and $C_1<\infty$, depending only on dimension and the
ADR constants, such that for each $k \in \mathbb{Z},$
there is a collection of Borel sets (``cubes'')
$$
\mathbb{D}_k:=\{Q_{j}^k\subset E: j\in \mathfrak{I}_k\},$$ where
$\mathfrak{I}_k$ denotes some (possibly finite) index set depending on $k$, satisfying

\begin{list}{$(\theenumi)$}{\usecounter{enumi}\leftmargin=.8cm
\labelwidth=.8cm\itemsep=0.2cm\topsep=.1cm
\renewcommand{\theenumi}{\roman{enumi}}}

\item $E=\cup_{j}Q_{j}^k\,\,$ for each
$k\in{\mathbb Z}$

\item If $m\geq k$ then either $Q_{i}^{m}\subset Q_{j}^{k}$ or
$Q_{i}^{m}\cap Q_{j}^{k}=\emptyset$.

\item For each $(j,k)$ and each $m<k$, there is a unique
$m$ such that $Q_{j}^k\subset Q_{i}^m$.

\item Diameter $\left(Q_{j}^k\right)\leq C_12^{-k}$.

\item Each $Q_{j}^k$ contains some ``surface ball'' $\Delta \big(x^k_{j},a_02^{-k}\big):=
B\big(x^k_{j},a_02^{-k}\big)\cap E$.

\item $H^n\left(\left\{x\in Q^k_j:{\rm dist}(x,E\setminus Q^k_j)\leq \tau \,2^{-k}\right\}\right)\leq
C_1\,\tau^\eta\,H^n\left(Q^k_j\right),$ for all $k,j$ and for all $\tau\in (0,a_0)$.
\end{list}
\end{lemma}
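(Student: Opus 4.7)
The plan is to adapt the construction of Michael Christ \cite{Ch}, which produces a dyadic grid on any space of homogeneous type, specialized here to the ADR set $E \subset \ree$. I would begin, for each $k \in \ZZ$, by selecting a maximal $2^{-k}$-separated net $\{x_j^k\}_{j \in \mathfrak{I}_k} \subset E$: its points have mutual distances at least $2^{-k}$, and every point of $E$ lies within $2^{-k}$ of some net point. Next I would impose a tree structure by assigning to each $x_j^{k+1}$ a parent $p(x_j^{k+1}) \in \{x_i^k\}_i$, chosen to be a closest level-$k$ net point with ties broken by a fixed total order on $\bigcup_k \mathfrak{I}_k$. Iterating, every net point acquires a unique ancestor at each coarser level. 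A first approximation of $Q_j^k$ is then the set $\widetilde{Q}_j^k$ consisting of the closure in $E$ of all net points $x_i^m$, $m \geq k$, whose level-$k$ ancestor is $x_j^k$.

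Properties (i)–(iii) fall out of this tree construction by definition. Property (iv), $\diam(\widetilde{Q}_j^k) \lesssim 2^{-k}$, follows since any $x \in \widetilde{Q}_j^k$ is connected to $x_j^k$ by a chain of parents of total length at most $\sum_{m \geq k} 2^{-m} \lesssim 2^{-k}$. For property (v), the separation of the net ensures that no point of $E$ lying in a sufficiently small ball $B(x_j^k, a_0 2^{-k})$ can have any other level-$k$ net point as its nearest ancestor, so $B(x_j^k, a_0 2^{-k}) \cap E \subset \widetilde{Q}_j^k$, and the comparability $H^n(\widetilde{Q}_j^k) \approx 2^{-kn}$ is immediate from \eqref{eq1.ADR}.

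The main obstacle is the small-boundary property (vi), which the naive construction need not satisfy: the boundaries of the $\widetilde{Q}_j^k$ can be fractal and carry a large fraction of the mass. The fix, due to Christ, is to randomize the parent assignment. One introduces a probability space on which, for each $k$, the parent of each $x_j^{k+1}$ is chosen by a rule whose distribution spreads on scale $2^{-k}$ (for instance, among all level-$k$ net points within distance $C\, 2^{-k}$ with a suitably smooth law). A direct geometric argument shows that for fixed $x \in E$ and fixed $m$, the probability that the random level-$m$ choice places $x$ in a $\tau 2^{-m}$-neighborhood of the resulting cube boundary is at most $C \tau^\eta$, with $\eta > 0$ depending only on the ADR constant. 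Fubini, together with a geometric series in $m$, gives
\begin{equation*}
\mathbb{E}\, H^n\bigl(\{x \in \widetilde{Q}_j^k : \dist(x, E \setminus \widetilde{Q}_j^k) \leq \tau 2^{-k}\}\bigr) \leq C \tau^\eta H^n(\widetilde{Q}_j^k),
\end{equation*}
and a standard extraction then yields a deterministic grid satisfying (vi) for all cubes simultaneously, with constants depending only on dimension and the ADR constants.
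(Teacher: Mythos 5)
This lemma is not proved in the paper; the authors simply cite Christ~\cite{Ch} and David--Semmes~\cite{DS1,DS2}, with the remark that Christ's construction works for a general ratio $\delta\in(0,1)$ and that upgrading to $\delta=1/2$ is routine under the ADR hypothesis. Your sketch follows the broad shape of Christ's net-and-tree construction, but there are several genuine problems.

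The most serious is the misattribution of the mechanism behind the small-boundary property~(vi). Christ's argument is \emph{deterministic}: for a sufficiently small ratio $\delta$ he proves~(vi) by an iterative geometric argument exploiting the separation and density of the nets together with the doubling property, with no probability involved. Randomized dyadic grids on metric spaces are a much later development (Nazarov--Treil--Volberg in the Euclidean translation setting, and Hyt\"onen--Martikainen for general geometrically doubling metric spaces), and if you want to use them you still face a nontrivial extraction step: a bound in expectation over a single cube does not automatically yield one realization in which~(vi) holds for every cube at every scale and every $\tau$ simultaneously. Saying ``a standard extraction'' hides the actual work. Beyond that, two earlier steps are glossed over. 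First, defining $\widetilde Q_j^k$ as the closure in $E$ of the descendant net points produces sets that overlap along their boundaries, so~(ii) fails for $m=k$; one must carefully redistribute boundary points (this is where the total order would be used, but then the cubes are no longer closed and the claim that (i)--(iii) ``fall out by definition'' is not accurate). Second, the inner-ball property~(v) does \emph{not} follow from the separation argument at ratio $1/2$: the total length of a parent chain from level $m$ down to level $k$ is $\sum_{l\ge k}2^{-l}=2\cdot 2^{-k}$, which already exceeds the level-$k$ separation $2^{-k}$, so the ``nearest ancestor'' reasoning cannot rule out $y$ near $x_j^k$ having a different level-$k$ ancestor. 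Christ works with a small parameter $\delta$ precisely so that $\sum_{l\ge k}\delta^l=\delta^k/(1-\delta)$ is small relative to the separation $\delta^k$; passing to ratio $1/2$ in the ADR setting (e.g.\ by regrouping Christ cubes at a small base $\delta=2^{-M}$ across $M$ consecutive levels and filling in the intermediate scales) is exactly the ``routine matter'' the paper alludes to, and should be addressed rather than elided.
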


A few remarks are in order concerning this lemma.

\begin{list}{$\bullet$}{\leftmargin=0.4cm  \itemsep=0.2cm}

\item In the setting of a general space of homogeneous type, this lemma has been proved by Christ
\cite{Ch}.  In that setting, the
dyadic parameter $1/2$ should be replaced by some constant $\delta \in (0,1)$.
It is a routine matter to verify that one may take $\delta = 1/2$ in the presence of the Ahlfors-David
property (\ref{eq1.ADR}) (in this more restrictive context, the result already appears in \cite{DS1,DS2}).

\item  For our purposes, we may ignore those
$k\in \mathbb{Z}$ such that $2^{-k} \gtrsim {\rm diam}(E)$, in the case that the latter is finite.

\item  We shall denote by  $\mathbb{D}=\mathbb{D}(E)$ the collection of all relevant
$Q^k_j$, i.e., $$\mathbb{D} := \cup_{k} \mathbb{D}_k,$$
where, if $\diam (E)$ is finite, the union runs
over those $k$ such that $2^{-k} \lesssim  {\rm diam}(E)$.

\item Properties $(iv)$ and $(v)$ imply that for each cube $Q\in\mathbb{D}_k$,
there is a point $x_Q\in E$, a Euclidean ball $B(x_Q,r)$ and a surface ball
$\Delta(x_Q,r):= B(x_Q,r)\cap E$ such that
$r\approx 2^{-k} \approx {\rm diam}(Q)$
and \begin{equation}\label{cube-ball}
\Delta(x_Q,r)\subset Q \subset \Delta(x_Q,Cr),\end{equation}
for some uniform constant $C$.
We shall denote this ball and surface ball by
\begin{equation}\label{cube-ball2}
B_Q:= B(x_Q,r) \,,\qquad\Delta_Q:= \Delta(x_Q,r),\end{equation}
and we shall refer to the point $x_Q$ as the ``center'' of $Q$.

\item Let us now specialize to the case that  $E=\pom$, with $\Omega$ satisfying the Corkscrew condition.  Given $Q\in \mathbb{D}(\partial\Omega)$,
we
shall sometimes refer to a ``Corkscrew point relative to $Q$'', which we denote by
$X_Q$, and which we define to be the corkscrew point $X_\Delta$ relative to the ball
$\Delta:=\Delta_Q$ (cf. \eqref{cube-ball}, \eqref{cube-ball2} and Definition \ref{def1.cork}).  We note that
\begin{equation}\label{eq1.cork}
\delta(X_Q) \approx \dist(X_Q,Q) \approx \diam(Q).
\end{equation}
\end{list}

\begin{list}{$\bullet$}{\leftmargin=0.4cm  \itemsep=0.2cm}

\item For a dyadic cube $Q\in \mathbb{D}_k$, we shall
set $\ell(Q) = 2^{-k}$, and we shall refer to this quantity as the ``length''
of $Q$.  Evidently, $\ell(Q)\approx \diam(Q).$

\item For a dyadic cube $Q \in \mathbb{D}$, we let $k(Q)$ denote the ``dyadic generation''
to which $Q$ belongs, i.e., we set  $k = k(Q)$ if
$Q\in \mathbb{D}_k$; thus, $\ell(Q) =2^{-k(Q)}$.

\end{list}

\begin{definition} ({\bf $A_\infty$, $A_\infty^{\rm dyadic}$ and weak-$A_\infty$}).  \label{def1.ainfty}
Given a surface ball
$\Delta= B\cap\pom$, a Borel measure $\hm$ defined on $\partial \Omega$
is said to belong to the class $A_\infty(\Delta)$ if there are positive constants $C$ and $\theta$
such that for every $\Delta'=B'\cap\pom$ with $B'\subseteq B$,
\and every Borel set $F\subset\Delta'$, we have
\begin{equation}\label{eq1.ainfty}
\hm (F)\leq C \left(\frac{\sigma(F)}{\sigma(\Delta')}\right)^\theta\,\hm (\Delta').
\end{equation}
If we replace the surface balls $\Delta$
and $\Delta'$  by a dyadic cube $Q$
and its dyadic subcubes $Q'$, with $F\subset Q'$,
then we say that $\hm\in A_\infty^{\rm dyadic}(Q)$:
\begin{equation}\label{eq1.ainftydyadic}
\hm (F)\leq C \left(\frac{\sigma(F)}{\sigma(Q')}\right)^\theta\,\hm (Q').
\end{equation}
Similarly, $\hm \in$ weak-$A_\infty(\Delta)$, with $\Delta =B\cap\pom$, if for every
$\Delta'=B'\cap\pom$ with $2B'\subseteq B$,
we have
\begin{equation}\label{eq1.wainfty}
\hm (F) \leq C \left(\frac{\sigma(F)}{\sigma(\Delta')}\right)^\theta\,\hm (2 \Delta')
\end{equation}
\end{definition}
As is well known  \cite{CF}, \cite{GR}, \cite{Sa},
the $A_\infty$ (resp. weak-$A_\infty$) condition is equivalent to
the property that the measure
$\hm$ is absolutely continuous with respect to $\sigma$, and that its density
satisfies a reverse H\"{o}lder (resp. weak reverse H\"{o}lder) condition.
In this paper, we are interested in the case that $\hm = \omega^{X}$,
the harmonic measure with
pole at $X$.  In that setting, we let
$k^X:= d\omega^X/d\sigma$ denote
the Poisson kernel, so that \eqref{eq1.ainfty} is equivalent to the reverse H\"{o}lder estimate
\begin{equation}\label{eq1.RH}
\left(\fint_{\Delta'} \left(k^{X}\right)^q d\sigma\right)^{1/q} \leq C\fint_{\Delta'}k^{X}\,d\sigma
\,,
\end{equation}
for some $q>1$
and for some uniform constant $C$.
In particular, when $\Delta'=\Delta$, and $X= X_\Delta$, a Corkscrew point relative to $\Delta$,
the latter estimate reduces to
\begin{equation}\label{eq1.lq}
\int_{\Delta} \left(k^{X_\Delta}\right)^q d\sigma \leq C\, \sigma(\Delta)^{1-q}.
\end{equation}
Similarly, \eqref{eq1.wainfty} is equivalent to
\begin{equation}\label{eq1.WRH}
\left(\fint_{\Delta'} \left(k^{X}\right)^q d\sigma\right)^{1/q} \leq
C\fint_{2\Delta'}k^{X}\,d\sigma
\,.
\end{equation}
Assuming that the latter bound holds with $\Delta'=\Delta$, and with $X=X_\Delta$,
then one again obtains \eqref{eq1.lq}.

\subsection{Statement of the Main Result}

Our main result is as follows. We shall use the terminology that a connected open set
$\Omega\subset \ree$ is a {\bf 1-sided} NTA domain if it satisfies {\bf interior} (but not necessarily
exterior) Corkscrew and Harnack Chain conditions.

\begin{theorem}\label{theorem:converse}
Let $\Omega \subset \ree,\,n\geq2,$
be a 1-sided NTA domain, whose boundary
is $n$-dimensional ADR. Suppose also that  harmonic measure
$\omega$ is absolutely continuous with respect to surface measure
and that the Poisson kernel $k=d\omega/d\sigma$ satisfies the scale invariant  estimate
\begin{equation}\label{eqn:scale-inva:converse}
\int_{\Delta} \left(k^{X_\Delta}\right)^p d\sigma \leq C\, \sigma(\Delta)^{1-p},
\end{equation}
for some $1<p<\infty$ and for all surface balls $\Delta$. Then $\partial \Omega$ is UR.
\end{theorem}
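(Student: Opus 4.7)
The strategy is to establish UR of $\partial\Omega$ by verifying the Carleson measure condition \eqref{eq1.sf}, or equivalently the global $L^2$ bound \eqref{eqA.6a}, for the gradient of the single layer potential $\nabla\mathcal{S}$. This $L^2$ estimate will be obtained by applying the local $Tb$ theorem of \cite{GM} mentioned in the introduction, whose virtue is that the test functions need only satisfy a scale-invariant $L^p$ bound rather than an $L^\infty$ bound---precisely the information available on the Poisson kernel under the hypothesis \eqref{eqn:scale-inva:converse}.

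Concretely, for each dyadic cube $Q \in \mathbb{D}(\partial\Omega)$ one needs a test function $b_Q$ supported in $Q$ satisfying (i) $\int_Q |b_Q|^p\, d\sigma \lesssim \sigma(Q)$; (ii) $|\int_Q b_Q\, d\sigma| \gtrsim \sigma(Q)$; and (iii) a suitably truncated version of $\nabla\mathcal{S} b_Q$ satisfies a comparable $L^q$ bound on $Q$ for some $q>1$. The natural candidate is
\[
b_Q \;:=\; \sigma(\Delta_Q)\, k^{X_Q}\, \mathbf{1}_{Q},
\]
where $X_Q$ is the corkscrew point relative to $Q$ afforded by the 1-sided NTA condition. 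Condition (i) is an immediate restatement of \eqref{eqn:scale-inva:converse}, and condition (ii) reduces to the Bourgain-type lower bound $\omega^{X_Q}(\Delta_Q)\gtrsim 1$, which in the 1-sided NTA plus ADR setting follows from the maximum principle, Harnack's inequality chained from $X_Q$, and the ADR lower bound on $\sigma(\Delta_Q)$.

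For (iii), I would exploit the classical representation
\[
\int_{\partial\Omega} \mathcal{E}(X-y)\, d\omega^{X_Q}(y) \;=\; \mathcal{E}(X - X_Q) - G(X, X_Q), \qquad X\in\Omega,
\]
where $G$ is the Green function. Since $b_Q\, d\sigma = \sigma(\Delta_Q)\, \mathbf{1}_Q\, d\omega^{X_Q}$, inserting this identity, splitting the integral over $\Delta_Q$ versus $\partial\Omega\setminus\Delta_Q$, and differentiating in $X$ decomposes $\nabla \mathcal{S} b_Q(X)$ into three pieces. The ``pole term'' $\sigma(\Delta_Q)\,\nabla_X \mathcal{E}(X-X_Q)$ is $\lesssim 1$ pointwise near $Q$, since $|X-X_Q|\gtrsim \ell(Q)$ by \eqref{eq1.cork}, so its $L^q$ norm is $\lesssim \sigma(Q)^{1/q}$. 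The ``Green-function term'' $\sigma(\Delta_Q)\nabla_X G(X, X_Q)$ has boundary trace essentially $\sigma(\Delta_Q)\, k^{X_Q}$ by the Poisson kernel relation, so its $L^q$ contribution on $Q$ for $1<q\le p$ is controlled by H\"older and \eqref{eqn:scale-inva:converse}. The ``far-field term'' $\sigma(\Delta_Q)\int_{\partial\Omega\setminus\Delta_Q}\nabla_X\mathcal{E}(X-y)\, k^{X_Q}(y)\, d\sigma(y)$ is handled by dyadic annular decomposition around $x_Q$, together with another application of \eqref{eqn:scale-inva:converse} in each annulus and summation.

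The main obstacle lies in the technical execution of step (iii) in the absence of exterior access to $\partial\Omega$: the classical jump relations for layer potentials and the passage to non-tangential boundary traces of $\nabla \mathcal{S}$ are delicate in a 1-sided NTA domain. The likely remedy is to reformulate condition (iii) as a Carleson-type bound for $|\nabla \mathcal{S} b_Q(X)|^2\, \delta(X)\, dX$ on a Whitney/Carleson region above $Q$, which via tent-space and square-function techniques encodes the requisite boundary control, and then to verify that the version of the local $Tb$ theorem of \cite{GM} accepts this interior form of the hypothesis. Managing this reformulation, together with the dyadic summation of the far-field contribution and the careful identification of the Green function term with the Poisson kernel on a 1-sided NTA boundary, is where the bulk of the technical work must live.
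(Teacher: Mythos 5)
Your proposal correctly identifies the core strategy---the local $Tb$ theorem of \cite{GM}, test functions built from the Poisson kernel, and the decomposition of $\nabla\mathcal{S}b_Q$ into a pole term, a Green-function term, and a far-field/cutoff term---and this is indeed the skeleton of the paper's argument. Conditions (i) and (ii) are handled essentially as you say, with the caveat that the paper uses a smooth cutoff $\eta_Q$ at scale $\approx\hat{B}_Q$ and a ``far'' corkscrew point $\hat{X}_Q$ (a corkscrew point for a dilate $\hat{\Delta}_Q$ chosen so that $\hat{X}_Q\notin 6\tilde{B}_Q$), rather than the sharp cutoff $\mathbf{1}_Q$ and the nearby corkscrew $X_Q$; these choices matter, since the far pole keeps $G(\cdot,\hat X_Q)$ harmonic in a fixed enlargement of the Whitney/Carleson region over $Q$, and the smooth cutoff allows a clean divergence-theorem computation.

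The genuine gap is in how you propose to handle condition (iii), and it is not merely technical. You correctly anticipate that the Green-function piece must be controlled by a square-function estimate, and that the absence of exterior access is the obstruction. But the remedy you sketch---reformulating (iii) as a Carleson bound on $|\nabla\mathcal{S}b_Q|^2\delta\,dX$ and appealing to tent-space technology---does not address the actual difficulty. The heart of the paper's verification of (iii) is a good-$\lambda$ inequality of Dahlberg--Jerison--Kenig type bounding the (conical) square function of $u=\nabla G(\cdot,\hat X_Q)$ by its non-tangential maximal function, which in turn is controlled by the Hardy--Littlewood maximal function of $k^{\hat X_Q}$ and hence by \eqref{eqn:scale-inva:converse}. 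The good-$\lambda$ argument relies on: (a) the full comparison principle of \cite[Lemma 4.10]{JK} in sawtooth subdomains; (b) non-tangential convergence of bounded harmonic functions in those subdomains via \cite[Theorem 6.4]{JK}; and (c) doubling and $A_\infty$-type properties of harmonic measure. None of these is available a priori in a purely 1-sided NTA domain with ADR boundary---indeed, absolute continuity of $\omega$ and Wiener regularity of boundary points are not known qualitatively in that generality.

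The missing idea is the approximation scheme that occupies Step 1 of the paper: replace $\Omega$ by the sawtooth domains $\Omega_N$ obtained by removing all dyadic scales below $2^{-N}$. Each $\Omega_N$ is a 1-sided NTA domain with ADR boundary (uniformly in $N$) that additionally enjoys a \emph{qualitative} exterior Corkscrew condition at scales $\lesssim 2^{-N}$; this is enough to make every boundary point Wiener regular, to give $\omega_N\ll\sigma_N$ by \cite{DJe}, and to unlock the comparison principle and non-tangential convergence results needed for the good-$\lambda$. The nontrivial analytic input (Proposition~\ref{prop:RHp-scale-N} and the subsequent case analysis) is that the hypothesis \eqref{eqn:scale-inva:converse} transfers to each $\Omega_N$ with constants uniform in $N$, possibly after decreasing $p$ to some $\tilde{p}$; this requires comparing $\omega_N$ with $\omega$ and with harmonic measure for local polyhedral NTA domains $\mathbb{P}$, and is not automatic. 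One then runs the $Tb$ argument in each $\Omega_N$ to get UR of $\partial\Omega_N$ with uniform bounds, and finally transfers UR back to $\partial\Omega$ via a singular-integral comparison argument (Section~\ref{section:UR-general}). Without this approximating-domain layer, the square-function estimate you need in (iii) does not get off the ground, so the argument as written would not close.
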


\begin{remark}
As mentioned above \eqref{eqn:scale-inva:converse} is (apparently) weaker than $\omega^{X_\Delta}$ being in weak-$A_\infty(\Delta)$. However, \textit{a posteriori}, we obtain that these two conditions are equivalent. Namely, Theorem \ref{theorem:converse} shows that $\partial \Omega$ is UR and therefore we can apply \cite[Theorem 1.26]{HM-I} to obtain that $\omega^{X_\Delta}$ belongs to weak-$A_\infty(\Delta)$.
\end{remark}

Theorem \ref{theorem:converse} leads to an immediate ``self-improvement'' of itself,
in which the hypotheses are assumed to hold only in an appropriate ``big pieces'' sense.

\begin{theorem}\label{theorem:converse-BP}
Let $E\subset\ree$ be a closed set and assume that $E$ is $n$-dimensional ADR.
Suppose further that
$E \in BP(\partial\mathcal{S})$ (cf. Definition \ref{def1.bp}), where
$\mathcal{S}$ is a collection of 1-sided NTA domains with $n$-dimensional ADR boundaries,
with uniform control of all of the relevant Corkscrew, Harnack Chain and ADR constants. Assume also that there exist $1<p<\infty$ and $C\ge 1$ such that for every $\Omega'\in \mathcal{S}$ we have that harmonic measure $\omega_{\Omega'}$ is absolutely continuous with respect to surface measure
and that the Poisson kernel $k_{\Omega'}=d\omega_{\Omega'}/d\sigma_{\Omega'}$ satisfies the scale invariant  estimate
\begin{equation}\label{eqn:scale-inva:converse:BP}
\int_{\Delta'} \left(k_{\Omega'}^{X_{\Delta'}}\right)^p d\sigma_{\Omega'} \leq C\, \sigma_{\Omega'}(\Delta')^{1-p},
\end{equation}
and for all surface balls $\Delta'$ on  $\Omega'$. Then $E$ is UR.
\end{theorem}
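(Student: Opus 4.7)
The proof will be a short deduction from Theorem \ref{theorem:converse} combined with a classical permanence result of David and Semmes. The plan is as follows.

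First, I would apply Theorem \ref{theorem:converse} separately to each domain $\Omega'\in\mathcal{S}$. By hypothesis, every $\Omega'$ is a 1-sided NTA domain whose boundary is $n$-dimensional ADR, with all relevant Corkscrew, Harnack Chain, and ADR constants uniformly controlled; moreover, the Poisson kernel of $\Omega'$ satisfies the scale-invariant $L^p$ bound \eqref{eqn:scale-inva:converse:BP} with the \emph{same} exponent $p$ and the \emph{same} constant $C$. Since the UR constants produced by Theorem \ref{theorem:converse} depend only on the ``allowable parameters'' (dimension, the NTA/ADR constants, $p$, and $C$), one obtains that each $\partial\Omega'$ is UR with a \emph{uniform} UR constant that depends only on the data of the family $\mathcal{S}$ and not on the individual $\Omega'$.

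Next, I would reinterpret the hypothesis $E\in BP(\partial\mathcal{S})$ in light of the previous step: for every $x\in E$ and $0<r<\diam(E)$ there is a choice of $\Omega'\in\mathcal{S}$, and hence a closed set $F:=\partial\Omega'$ which is UR with uniform constants, such that
\[
H^n\bigl(F\cap B(x,r)\cap E\bigr)\,\geq\,\alpha\, H^n\bigl(B(x,r)\cap E\bigr)\,\approx\,\alpha\, r^n.
\]
In other words, $E$ has ``big pieces of UR sets'' in the sense of David and Semmes, with uniform constants.

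Finally, I would invoke the classical theorem of David--Semmes \cite{DS1,DS2} asserting that an $n$-dimensional ADR set which has big pieces of uniformly rectifiable sets is itself uniformly rectifiable (the so-called \emph{BPUR implies UR} principle). Applied to $E$, this immediately yields that $E$ is UR, completing the proof of Theorem \ref{theorem:converse-BP}.

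The only substantive point to verify is the uniformity of the UR constants in the first step; this is not really an obstacle but merely requires checking that all constants obtained in the proof of Theorem \ref{theorem:converse} depend solely on the allowable parameters, which is built into the statement of that theorem. Once this uniformity is in hand, the argument is essentially a one-line invocation of the David--Semmes permanence theorem, so there is no significant technical difficulty beyond correctly quoting and combining the two results.
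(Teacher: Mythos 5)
Your proposal is correct and follows the same route as the paper: apply Theorem \ref{theorem:converse} to each $\Omega'\in\mathcal{S}$ to conclude that each $\partial\Omega'$ is UR with uniform constants, observe that $E\in BP(\partial\mathcal{S})$ then means $E$ has big pieces of UR sets, and invoke the David--Semmes stability theorem (BPUR implies UR, \cite{DS2}). There is no substantive difference from the argument in the paper.
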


The proof of this result is as follows. Let $x\in E$ and $0<r<\diam(E)$. Under the hypotheses of Theorem
\ref{theorem:converse-BP}, there is  $\Omega'$ satisfying
the hypotheses of Theorem \ref{theorem:converse}, with the property that
for some $0<\alpha\le 1$, we have
\begin{equation}\label{eqn-bp-contact}
\sigma\left(\pom' \cap B(x,r)\cap E\right)\,\geq\, \alpha\, \sigma(B(x,r)\cap E).
\end{equation}
By Theorem \ref{theorem:converse}, \eqref{eqn:scale-inva:converse:BP} implies that $\pom'$ is UR with uniform control of the constants (since the ADR and 1-sided NTA constants are uniformly controlled, and also $p$ and $C$ in \eqref{eqn:scale-inva:converse:BP} are independent of $\Omega'$).  This and \eqref{eqn-bp-contact} implies that $E$ has big pieces of UR sets and therefore $E$ is UR, see
\cite{DS2}.

\section{Proof of Theorem \ref{theorem:converse}}

\subsection{Preliminaries}

We collect some of the definition and auxiliary results from \cite{HM-I} that will be used later.
In the sequel, $\Omega \subset \ree,\, n\geq 2,$ will be a connected, open set, $\omega^X$ will denote
harmonic measure for $\Omega$, with pole at $X$, and $G(X,Y)$
will be the Green function. At least in the case that $\Omega$ is bounded, we may, as usual,
define $\omega^X$ via the maximum principle and the Riesz representation theorem,
after first using the method of Perron (see, e.g., \cite[pp. 24--25]{GT}) to construct a harmonic function ``associated'' to arbitrary continuous boundary
data.\footnote{Since we have made no assumption as regards
Wiener's regularity criterion, our harmonic function is a generalized solution, which
may not be continuous up to the boundary.}  For unbounded $\Omega$, we may still
define harmonic measure via a standard approximation scheme, see \cite[Section 3]{HM-I} for more details.  We note for future reference that $\omega^X$
is a non-negative, finite, outer regular Borel measure.


The Green function may now be constructed
by setting
\begin{equation}\label{eq2.greendef}
G(X,Y):= \mathcal{E}\,(X-Y) - \int_{\partial\Omega}\mathcal{E}\,(X-z)\,d\omega^Y(z),
\end{equation}
where $\mathcal{E}\,(X):= c_n |X|^{1-n}$ is the usual fundamental solution for the Laplacian
in $\ree$.  We choose the normalization that makes $\mathcal{E}$ positive.

\begin{lemma}[Bourgain \cite{B}]\label{Bourgainhm}  Suppose that
$\partial \Omega$ is $n$-dimensional ADR.  Then there are uniform constants $c\in(0,1)$
and $C\in (1,\infty)$,
such that for every $x \in \partial\Omega$, and every $r\in (0,\diam(\partial\Omega))$,
if $Y \in \Omega \cap B(x,cr),$ then
\begin{equation}\label{eq2.Bourgain1}
\omega^{Y} (\Delta(x,r)) \geq 1/C>0 \;.
\end{equation}
In particular, if $\Omega$ satisfies the Corkscrew and Harnack Chain conditions,
then for every surface ball $\Delta$, we have
\begin{equation}\label{eq2.Bourgain2}
\omega^{X_\Delta} (\Delta) \geq 1/C>0 \;.
\end{equation}
\end{lemma}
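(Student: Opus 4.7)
The plan is to prove \eqref{eq2.Bourgain1} via Bourgain's classical Newton-potential argument: build a bounded harmonic function in $\Omega$ that is bounded below at $Y$ and whose boundary values decay away from $\Delta(x,r)$, and compare it to $\omega^Y$ via the harmonic-measure representation. The estimate \eqref{eq2.Bourgain2} then follows by connecting a Corkscrew point to $X_\Delta$ through a Harnack chain.

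By dilation we may assume $r=1$. Fix a small parameter $\eta\in(0,1)$ to be determined. Since ADR gives $H^n(\Delta(x,\eta))\gtrsim\eta^n$, Frostman's lemma furnishes a positive Borel measure $\mu$ supported in $\Delta(x,\eta)$ with $\mu(\ree)\gtrsim\eta^n$ and the growth estimate $\mu(B(z,t))\leq C\,t^n$ for all $z\in\ree$, $t>0$. Consider its Newton potential
\[
u(Y)\,:=\,\int_{\partial\Omega}|Y-z|^{1-n}\,d\mu(z).
\]
Since $n\geq 2$, the kernel $|\cdot|^{1-n}$ is harmonic off the origin; as $\supp\mu\subset\partial\Omega$, the function $u$ is harmonic in $\Omega$. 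A dyadic-annulus estimate using the Frostman growth shows that $u$ is continuous on $\ree$ with $\|u\|_\infty\lesssim\eta$. Two further pointwise bounds drive the argument: for $Y\in\Omega\cap B(x,\eta/2)$ the triangle inequality gives $|Y-z|\leq 3\eta/2$ for $z\in\supp\mu$, so $u(Y)\gtrsim\eta$; and for $y\in\partial\Omega$ with $|y-x|\geq 2\eta$ the separation $|y-z|\geq |y-x|/2$ yields $u(y)\lesssim\eta^n|y-x|^{1-n}$.

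Since $u$ is continuous on $\ree$, bounded, and harmonic in $\Omega$, and every point of $\partial\Omega$ is Wiener regular under ADR, the representation $u(Y)=\int_{\partial\Omega}u\,d\omega^Y$ holds for $Y\in\Omega$. For $Y_0\in\Omega\cap B(x,\eta/2)$, splitting at $\Delta(x,2)$ and applying the two pointwise upper bounds yields
\[
c_1\eta\,\leq\,u(Y_0)\,\leq\,C\eta\,\omega^{Y_0}(\Delta(x,2))+C\eta^n\!\!\int_{\partial\Omega\setminus\Delta(x,2)}|y-x|^{1-n}\,d\omega^{Y_0}(y)\,\leq\,C\eta\,\omega^{Y_0}(\Delta(x,2))+C\,2^{1-n}\eta^n,
\]
using $|y-x|\geq 2$ and $\omega^{Y_0}(\partial\Omega)\leq 1$ in the final step. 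Dividing through by $\eta$ gives $c_1\leq C\omega^{Y_0}(\Delta(x,2))+C\,2^{1-n}\eta^{n-1}$; since $n\geq 2$, choosing $\eta$ small enough (depending only on ADR constants and $n$) absorbs the tail into $c_1/2$, producing $\omega^{Y_0}(\Delta(x,2))\gtrsim 1$. Undoing the dilation then delivers \eqref{eq2.Bourgain1}.

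The main obstacle is balancing scales: the tail contribution $\eta^n$ competes with the lower bound $\eta$ at $Y_0$, and the argument would collapse without a strictly smaller power of $\eta$ in the tail. Supporting $\mu$ on the small ball $\Delta(x,\eta)$ rather than on $\Delta(x,1)$ is precisely what creates the favorable ratio and lets $\eta^{n-1}$ be driven to zero. Finally, \eqref{eq2.Bourgain2} is immediate: the Corkscrew condition provides $Y\in\Omega\cap B(x,cr)$ with $\delta(Y)\gtrsim r$, which \eqref{eq2.Bourgain1} places in the set where $\omega^Y(\Delta(x,r))\geq 1/C$; a Harnack chain of uniformly bounded length then connects $Y$ to $X_\Delta$, and Harnack's inequality for the positive harmonic function $Z\mapsto\omega^Z(\Delta(x,r))$ transfers the lower bound to $X_\Delta$.
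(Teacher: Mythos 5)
Your proposal is correct and is essentially the classical Bourgain argument; the paper itself does not supply a proof of this lemma but simply cites Bourgain's paper \cite{B}, which contains exactly the Newton-potential/Frostman-measure construction you describe, so the two approaches coincide. Two small remarks: on an ADR set one can dispense with Frostman's lemma and simply take $\mu = H^n\big|_{\Delta(x,\eta)}$, whose upper growth $\mu(B(z,t))\lesssim t^n$ is the upper ADR bound and whose total mass $\gtrsim \eta^n$ is the lower ADR bound; and for the Harnack chain at the end one should first produce a corkscrew point $Y$ for the smaller ball $\Delta(x,cr)$ (so that $Y\in B(x,cr)$ with $\delta(Y)\gtrsim r$) rather than attempt to apply \eqref{eq2.Bourgain1} directly to $X_\Delta$, which need not lie in $B(x,cr)$ — this is implicit in your last paragraph but worth making explicit.
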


We next introduce some terminology.
\begin{definition}\label{qualextCS}
A domain $\Omega$ satisfies the \textbf{qualitative exterior
Corkscrew} condition if there exists $N\gg 1$ such that $\Omega$ has exterior corkscrew points at all scales smaller than $2^{-N}$. That is, there exists a constant $c_N$ such that for every surface ball $\Delta=\Delta(x,r)$, with $x\in \pom$  and $r\le 2^{-N}$, there is a ball $B(X_\Delta^{ext},c_N\,r)\subset B(x,r)\cap\Omega_{ext}$. \end{definition}
Let us observe that if
$\Omega$ satisfies the qualitative exterior
Corkscrew condition, then every point in $\pom$ is regular in the sense of Wiener.
Moreover, for $1$-sided NTA domains, the qualitative
exterior Corkscrew points allow local H\"{o}lder continuity
at the boundary (albeit with bounds which may depend badly on $N$).

\begin{lemma}[{\cite[Lemma 3.11]{HM-I}}]\label{lemma2.green} There are positive, finite constants $C$, depending only on dimension,
and $c(n,\theta)$, depending on dimension and $\theta \in (0,1),$
such that the Green function satisfies
\begin{eqnarray}\label{eq2.green}
&G(X,Y) \leq C\,|X-Y|^{1-n}\\[4pt]\label{eq2.green2}
& c(n,\theta)\,|X-Y|^{1-n}\leq G(X,Y)\,,\quad {\rm if } \,\,\,|X-Y|\leq \theta\, \delta(X)\,, \,\, \theta \in (0,1)\,.
\end{eqnarray}
Moreover, if every point on $\pom$  is regular in the sense of Wiener, then
\begin{equation}\label{eq2.green3}
G(X,Y)\geq 0
\end{equation}
\begin{equation}\label{eq2.green4}
G(X,Y)=G(Y,X)\,,\qquad \forall X,Y\in\Omega\,,\, X\neq Y\,.
\end{equation}
\end{lemma}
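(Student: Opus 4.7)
The plan is to verify the four claims in turn, arguing directly from the definition \eqref{eq2.greendef} together with elementary potential-theoretic facts.

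First, the upper bound \eqref{eq2.green}: the integral $\int_{\partial\Omega}\mathcal{E}(X-z)\,d\omega^Y(z)$ is non-negative because $\mathcal{E}$ and $\omega^Y$ are both non-negative, whence $G(X,Y)\le \mathcal{E}(X-Y)=c_n|X-Y|^{1-n}$. For the lower bound \eqref{eq2.green2}, observe that every $z\in\partial\Omega$ satisfies $|X-z|\ge\delta(X)$, so $\mathcal{E}(X-z)\le c_n\,\delta(X)^{1-n}$ uniformly on $\partial\Omega$; together with $\omega^Y(\partial\Omega)\le 1$ (which itself follows from the maximum principle applied to the harmonic extension of $1$), this bounds the integral by $c_n\,\delta(X)^{1-n}$. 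The constraint $|X-Y|\le\theta\,\delta(X)$ gives $\mathcal{E}(X-Y)\ge c_n\,\theta^{1-n}\,\delta(X)^{1-n}$, and subtracting yields $G(X,Y)\ge c_n(1-\theta^{n-1})|X-Y|^{1-n}$, which is strictly positive for $\theta\in(0,1)$ and $n\ge 2$.

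For the non-negativity \eqref{eq2.green3}, the key observation is that $Y\mapsto u(Y):=\int_{\partial\Omega}\mathcal{E}(X-z)\,d\omega^Y(z)$ is the Perron--Wiener--Brelot solution in $\Omega$ for the boundary datum $z\mapsto\mathcal{E}(X-z)$, which is continuous and bounded on $\partial\Omega$ because $X\in\Omega$. Under the Wiener regularity hypothesis, $u$ extends continuously to $\partial\Omega$ with the prescribed values, so $G(X,\cdot)=\mathcal{E}(X-\cdot)-u$ extends continuously to $0$ on $\partial\Omega$. Since $G(X,\cdot)$ is harmonic on $\Omega\setminus\{X\}$ with a positive singularity at $X$, the minimum principle applied on $\Omega\setminus\overline{B(X,\varepsilon)}$ (after exhausting by bounded subdomains and using the decay of $\mathcal{E}$ at infinity in the unbounded case) yields $G(X,Y)\ge 0$.

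Symmetry \eqref{eq2.green4} then follows by the classical application of Green's second identity to $G(X,\cdot)$ and $G(Y,\cdot)$ on the punctured region $\Omega\setminus(\overline{B(X,\varepsilon)}\cup\overline{B(Y,\varepsilon)})$: the volume terms cancel since both functions are harmonic on this region, the integrals over $\partial\Omega$ vanish in the limit thanks to the continuous boundary extension of $G$ established in Step~3, and passage to $\varepsilon\to 0$ in the small-sphere terms reproduces $G(Y,X)$ and $G(X,Y)$. The main technical obstacle lies in the rigorous execution of Steps~3 and~4: one must upgrade the a priori only ``generalized'' solution $u$ to a genuine continuous extension via the Wiener regularity hypothesis, and in the unbounded case additional bookkeeping is needed to thread the argument through the approximation scheme underpinning the definition of $\omega^Y$ in \cite[Section~3]{HM-I}, together with care regarding decay at infinity when applying the minimum principle and Green's identity on unbounded regions. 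Once this boundary behaviour is under control, the remaining pieces are standard.
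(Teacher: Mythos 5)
Your treatment of \eqref{eq2.green}, \eqref{eq2.green2}, and \eqref{eq2.green3} is sound and is essentially the standard direct argument from the definition \eqref{eq2.greendef}: the upper bound follows because the subtracted integral is non-negative; the lower bound follows from the pointwise estimate $\mathcal{E}(X-z)\le c_n\,\delta(X)^{1-n}$ for $z\in\partial\Omega$ together with $\omega^Y(\partial\Omega)\le 1$, which gives $c(n,\theta)=c_n\,(1-\theta^{n-1})>0$; and non-negativity follows from the minimum principle once Wiener regularity supplies the continuous boundary extension of $G(X,\cdot)$, with the usual caveats about controlling behaviour at infinity when $\Omega$ is unbounded, which you acknowledge.

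The symmetry step \eqref{eq2.green4}, however, has a genuine gap as written. Applying Green's second identity on $\Omega\setminus(\overline{B(X,\varepsilon)}\cup\overline{B(Y,\varepsilon)})$ presupposes that the divergence theorem is available on that region, which requires $\partial\Omega$ to have at least Lipschitz-type regularity and requires $G(X,\cdot)$, $G(Y,\cdot)$ to be $C^1$ up to $\partial\Omega$. Wiener regularity alone furnishes neither: it is a purely qualitative, pointwise condition, and in this generality the boundary terms $\int_{\partial\Omega}(u\,\partial_\nu v - v\,\partial_\nu u)\,d\sigma$ are not even a priori meaningful, let alone dismissible by ``the integrals over $\partial\Omega$ vanish thanks to continuity.'' You flag a difficulty only in the unbounded case (``when applying \ldots Green's identity on unbounded regions''), but the obstruction is already present for a bounded $\Omega$ whose boundary is merely Wiener regular. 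The standard repair is an exhaustion argument: take a sequence of smooth bounded subdomains $\Omega_k\nearrow\Omega$, note that each Green function $G_k$ is symmetric by the classical identity valid on smooth domains, show $G_k(X,\cdot)\nearrow G(X,\cdot)$ via the maximum principle (Wiener regularity entering to identify the limit with the $G$ defined by \eqref{eq2.greendef}), and pass the symmetry of $G_k$ to the limit. That mechanism replaces the direct Green's identity application entirely, and it is the missing ingredient in your Step~4.
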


\begin{lemma}[{\cite[Lemma 3.30]{HM-I}}] \label{lemma2.cfms}
Let $\Omega$ be a 1-sided NTA domain with $n$-dimensional ADR boundary, and suppose that every $x \in \partial \Omega$ is regular in the sense of Wiener. Fix
$B_0:=B(x_0,r_0)$  with $x_0\in\pom$, and $\Delta_0 := B_0\cap\partial\Omega$.
Let $B:=B(x,r)$,  $x\in \pom$, and
$\Delta:=B\cap\pom$, and suppose that $2B\subset B_0.$  Then for
$X\in \Omega\setminus B_0$ we have
\begin{equation}\label{eq2.CFMS1}
r^{n-1}G(X_\Delta,X) \leq C \omega^X(\Delta).
\end{equation}
If, in addition,  $\Omega$ satisfies the qualitative exterior corkscrew condition, then
\begin{equation}\label{eq2.CFMS2}
\omega^X(\Delta)\leq C r^{n-1}G(X_\Delta,X).
\end{equation}
The constants in \eqref{eq2.CFMS1} and \eqref{eq2.CFMS2} depend {\bf only} on
dimension and on the constants in the $ADR$ and 1-sided NTA
conditions.
\end{lemma}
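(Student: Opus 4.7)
The plan is to prove both estimates by maximum-principle arguments, comparing the two positive harmonic functions $u(Y):=\omega^Y(\Delta)$ and $v(Y):=r^{n-1}G(Y,X)$ with $X\in\Omega\setminus B_0$ fixed. Both vanish continuously on $\pom$ thanks to the Wiener regularity hypothesis (for $v$; $u$ has boundary values $\chi_\Delta$), and by \eqref{eq2.green4} I may use the symmetry $G(X,Y)=G(Y,X)$ freely. The three main tools are the Green function bounds of Lemma \ref{lemma2.green}, Bourgain's non-degeneracy estimate $\omega^{X_\Delta}(\Delta)\ge 1/C$ from Lemma \ref{Bourgainhm}, and the Harnack Chain condition, which transfers such comparisons between interior points at comparable distance to $\pom$. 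The qualitative exterior Corkscrew hypothesis enters only in the proof of \eqref{eq2.CFMS2}, where it provides H\"older continuity of harmonic functions up to $\pom$.

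For the upper bound \eqref{eq2.CFMS1} I view both sides as functions of the variable $X$. On $\Omega\setminus\overline{B_0}$ the combination $X\mapsto r^{n-1}G(X,X_\Delta)-C\omega^X(\Delta)$ is harmonic (since $X_\Delta\in B_0$) and vanishes continuously on $\pom\setminus\overline{B_0}$, because that piece of boundary is disjoint from $\Delta\subset\overline{B_0}$, so both $G(\cdot,X_\Delta)$ and $\omega^{\cdot}(\Delta)$ tend to zero there. By the maximum principle it would suffice to verify $r^{n-1}G(X,X_\Delta)\le C\omega^X(\Delta)$ on the remaining piece $\Omega\cap\partial B_0$. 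On that sphere $|X-X_\Delta|\ge r$ (a consequence of $2B\subset B_0$ and $X_\Delta\in B$), so \eqref{eq2.green} gives $r^{n-1}G(X,X_\Delta)\le C$ immediately; what is needed is a matching lower bound $\omega^X(\Delta)\gtrsim 1$ uniformly in $X\in\Omega\cap\partial B_0$. This is the step I expect to be the main obstacle: when $r_0\gg r$, Bourgain's lemma does not apply directly to such $X$, and one must instead propagate the non-degeneracy estimate outward from $X_\Delta$ through a scale-by-scale chain of overlapping interior corkscrew points joined by the Harnack Chain condition. This iteration, which uses the 1-sided NTA structure in an essential way, is the technical heart of the argument.

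For the lower bound \eqref{eq2.CFMS2} I will use the Green representation formula. Fix a smooth cutoff $\varphi\in C^\infty_c(\tfrac{3}{2}B)$ with $\varphi\ge \chi_\Delta$ on $\pom$, $0\le\varphi\le 1$, and $|\Delta\varphi|\le Cr^{-2}$ supported in the annulus $A:=\tfrac{3}{2}B\setminus B$. Since $X\in\Omega\setminus B_0$ lies outside $\supp\varphi$, the representation formula yields
\begin{equation*}
\omega^X(\Delta)\,\le\,\int\varphi\,d\omega^X\,=\,-\int_{\Omega}G(X,Y)\,\Delta\varphi(Y)\,dY\,\le\,\frac{C}{r^2}\int_{A\cap\Omega}G(X,Y)\,dY.
\end{equation*}
I then decompose $A\cap\Omega$ into a deep part $\{Y\in A:\delta(Y)\ge\eta r\}$ and a shallow part $\{Y\in A:\delta(Y)<\eta r\}$ for a small $\eta>0$. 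On the deep part, the Harnack Chain condition gives $G(X,Y)\approx G(X,X_\Delta)$ (both $Y$ and $X_\Delta$ are corkscrews at scale $r$, joined by a chain of bounded length inside $\Omega\setminus\{X\}$), so that contribution is at most $Cr^{n+1}G(X,X_\Delta)$. On the shallow part, the qualitative exterior Corkscrew hypothesis supplies H\"older continuity of $G(X,\cdot)$ at $\pom$, giving $G(X,Y)\le C(\delta(Y)/r)^\alpha G(X,X_\Delta)$; combining with the ADR-based volume estimate $|\{Y\in A:\delta(Y)<s\}|\lesssim s\,r^n$ for $0<s\le r$ controls the shallow contribution also by $Cr^{n+1}G(X,X_\Delta)$. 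Summing and invoking the symmetry $G(X,X_\Delta)=G(X_\Delta,X)$ yields \eqref{eq2.CFMS2}.
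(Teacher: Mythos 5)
The paper offers no proof of this lemma: it is imported verbatim from \cite{HM-I} (Lemma 3.30 there), so your argument has to stand on its own, and the proof you propose for \eqref{eq2.CFMS1} rests on a claim that is simply false. After running the maximum principle in $\Omega\setminus\overline{B_0}$ you need $\omega^X(\Delta)\gtrsim 1$ uniformly for $X\in\Omega\cap\partial B_0$. When $r_0\gg r$ this fails in every reasonable domain: already in a half-space one has $\omega^X(\Delta(x,r))\approx (r/|X-x|)^{n}$ for $|X-x|\gg r$, so the harmonic measure of a fixed small surface ball decays to zero as the pole recedes, and the ``outward propagation'' you describe as the technical heart cannot rescue this --- each dyadic scale costs a fixed multiplicative factor and there are $\log(r_0/r)$ of them. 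The correct move is to excise a \emph{small} ball about the pole rather than the large ball $B_0$: the function $w(Y)=r^{n-1}G(Y,X_\Delta)-C\,\omega^Y(\Delta)$ is harmonic in $\Omega\setminus\overline{B(X_\Delta,\delta(X_\Delta)/2)}$; on the small sphere one has $r^{n-1}G(Y,X_\Delta)\le C$ by \eqref{eq2.green} and $\delta(X_\Delta)\approx r$, while $\omega^Y(\Delta)\ge 1/C$ by Lemma \ref{Bourgainhm} applied at a Corkscrew point of $\Delta(x,cr)$ followed by a bounded Harnack chain (such $Y$ satisfy $\delta(Y)\gtrsim r$ and $|Y-x|\le \tfrac32 r$); on $\pom$ one has $\limsup w\le 0$ since $G$ vanishes by Wiener regularity and $\omega^Y(\Delta)\ge 0$; and both terms vanish at infinity. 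The maximum principle then gives $w\le 0$ everywhere outside the small ball, in particular at every $X\in\Omega\setminus B_0$, using only ADR, interior Corkscrews and Harnack Chains, as the uniformity assertion requires.

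Your argument for \eqref{eq2.CFMS2} is the classical CFMS computation and is structurally reasonable, but it proves a weaker statement than the one claimed. The boundary H\"older continuity you use on the shallow set comes from the \emph{qualitative} exterior Corkscrew condition, whose constants ($N$ and $c_N$) are not allowable parameters; the paper warns immediately after Definition \ref{qualextCS} that the resulting H\"older bounds ``may depend badly on $N$.'' Since the entire point of this lemma here is that \eqref{eq2.CFMS2} holds for the approximating domains $\Omega_N$ with constants independent of $N$ (it is used exactly this way in Proposition \ref{prop:RHp-scale-N} and in \eqref{eq2.32}), a constant that degenerates with $N$ is fatal. The exterior hypothesis may enter only qualitatively (regularity of boundary points, symmetry and positivity of $G$, validity of the representation formula); the quantitative control of the shallow layers $\{Y\in A:\ \delta(Y)\approx 2^{-k}\eta r\}$ has to come from elsewhere, for instance by viewing each such $Y$ as a Corkscrew point for $\Delta(y_Y,K\delta(Y))$ (Remark \ref{remark1.2}) and invoking the already-established \eqref{eq2.CFMS1} at scale $\delta(Y)$, then summing a bounded-overlap cover of each layer. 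Two smaller points: your deep-part comparison $G(X,Y)\approx G(X,X_\Delta)$ needs a word on why the Harnack chain avoids $X$, since $X$ may lie within $r/2$ of $\tfrac32 B$; and the representation formula reads $\int\varphi\,d\omega^X=\varphi(X)+\int_\Omega G(X,Y)\Delta\varphi(Y)\,dY$ with the paper's sign conventions (harmless here since you take absolute values).
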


\begin{corollary}[{\cite[Corollary 3.36]{HM-I}}]\label{cor2.double}
Suppose that $\Omega$ is a  1-sided NTA domain with $n$-dimensional ADR boundary and that it also satisfies the qualitative exterior
Corkscrew condition.  Let $B:=B(x,r)$,  $x\in \pom$,
$\Delta:= B\cap\partial\Omega$ and $X\in \Omega\setminus 4B.$
Then there is a uniform constant $C$ such that
$$\omega^X(2\Delta)\leq C\omega^X(\Delta).$$
\end{corollary}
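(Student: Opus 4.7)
The plan is to combine the two directions of the CFMS comparison (Lemma~\ref{lemma2.cfms}) with an interior Harnack inequality for the Green function. Concretely, write
\[
\omega^X(2\Delta) \,\lesssim\, r^{n-1}\,G(X_{2\Delta},X)
\;\approx\; r^{n-1}\,G(X_\Delta,X) \,\lesssim\, \omega^X(\Delta).
\]
The outer inequalities are CFMS; the middle one is Harnack. Both applications of Lemma~\ref{lemma2.cfms} are permissible with the choice $B_0=4B$, since $2(2B)=4B=B_0$ and $X\in\Omega\setminus 4B=\Omega\setminus B_0$ by hypothesis (and of course $2B\subset B_0$ as well). This yields
\[
\omega^X(2\Delta) \,\leq\, C\,(2r)^{n-1}\,G(X_{2\Delta},X)
\quad\text{and}\quad
r^{n-1}\,G(X_\Delta,X) \,\leq\, C\,\omega^X(\Delta).
\]

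For the middle comparison, I would use the Harnack Chain condition to produce a chain of balls $B_1,\dots,B_N\subset\Omega$, with $N$ uniformly bounded, joining $X_\Delta$ to $X_{2\Delta}$, each satisfying $\diam(B_k)\approx \dist(B_k,\pom)$. This is possible because $X_\Delta, X_{2\Delta}\in 2B$ with $\delta(X_\Delta),\delta(X_{2\Delta})\approx r$ and $|X_\Delta-X_{2\Delta}|\lesssim r$, so $N$ depends only on the Harnack Chain constants. The qualitative exterior Corkscrew condition ensures Wiener regularity at every boundary point, so by \eqref{eq2.green3} and \eqref{eq2.green4} the function $Y\mapsto G(Y,X)$ is non-negative and harmonic on $\Omega\setminus\{X\}$. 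Thus applying Harnack's inequality along the chain gives $G(X_{2\Delta},X)\approx G(X_\Delta,X)$, and combining the three estimates yields the claim.

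The one point that deserves care, and is really the only substantive step, is verifying that the Harnack chain stays a definite distance away from the pole $X$, where $G(\cdot,X)$ is singular. Here is precisely where the hypothesis $X\in \Omega\setminus 4B$ (rather than, say, $X\notin 2B$) is used: every ball $B_k$ in the chain is contained in a neighborhood of size $O(r)$ of the segment between $X_\Delta$ and $X_{2\Delta}$, hence lies inside a fixed dilate of $2B$, while $|X-x|\geq 4r$ forces $\dist(X,B_k)\gtrsim r$. Consequently each $B_k$ is contained in the domain of harmonicity of $G(\cdot,X)$, and Harnack's inequality may be iterated a uniformly bounded number of times, yielding a constant that depends only on the ADR, 1-sided NTA, and dimensional parameters.
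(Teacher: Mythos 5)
The overall strategy --- two applications of Lemma~\ref{lemma2.cfms} sandwiching a change-of-pole comparison for the Green function --- is the right one, and the CFMS applications with $B_0 = 4B$ are correctly set up (for $\Delta$ one needs $2B\subset B_0$ and $X\notin B_0$; for $2\Delta$ one needs $4B\subset B_0$ and $X\notin B_0$; both hold with $B_0=4B$). However, the justification of the middle step has a real gap. You assert that each Harnack chain ball $B_k$ lies inside a fixed dilate of $2B$, and that $|X-x|\geq 4r$ therefore forces $\dist(X,B_k)\gtrsim r$. It is true that the chain balls have diameters $\lesssim r$ and lie within distance $\lesssim r$ of $X_\Delta$ (this follows by iterating $\dist(B_{k+1},\pom)\leq \dist(B_k,\pom)+\diam(B_k)$ and using the bound $N\leq C(\Lambda)$). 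But the resulting constant $M$ for which $\bigcup_k B_k\subset MB$ is determined by the Harnack Chain and Corkscrew constants, and there is nothing in Definition~\ref{def1.hc} pinning it below $4$. For a domain whose Harnack Chain constant is moderately large, the chain is only guaranteed to stay in $MB$ for some $M>4$, and for $X\in MB\setminus 4B$ the chain could pass through the pole, where $G(\cdot,X)$ is singular. So the hypothesis $X\notin 4B$ alone does not legitimize iterating Harnack along the chain. (The constant $4$ is exactly what is needed for the CFMS step applied to $2\Delta$; it is not automatically enough for the chain.)

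There is a simple fix that removes the chain entirely and is presumably closer to the intended argument. Observe that $X_\Delta$ is itself a Corkscrew point relative to $2\Delta$: since $B(X_\Delta,cr)\subset B(x,r)\cap\Omega\subset B(x,2r)\cap\Omega$ and $cr=(c/2)(2r)$, the point $X_\Delta$ satisfies the Corkscrew condition for $\Delta(x,2r)$ with constant $c/2$. The constants in Lemma~\ref{lemma2.cfms} depend only on the ADR and 1-sided NTA parameters (in particular on the relevant Corkscrew constant), so one may apply \eqref{eq2.CFMS2} to $2\Delta$ with the Corkscrew point taken to be $X_\Delta$, and \eqref{eq2.CFMS1} to $\Delta$ with the same point, both with $B_0=4B$. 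This gives
\[
\omega^X(2\Delta)\,\lesssim\, (2r)^{n-1}G(X_\Delta,X)\,\approx\, r^{n-1}G(X_\Delta,X)\,\lesssim\,\omega^X(\Delta)\,,
\]
with no Harnack chain required. Your use of Wiener regularity, \eqref{eq2.green3} and \eqref{eq2.green4} is fine but becomes superfluous once the chain is eliminated.
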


We next introduce some ``discretized'' and ``geometric'' sawtooth and Carleson regions from \cite[Section 3]{HM-I}.
Given a ``dyadic cube''
$Q\in \dd(\partial\Omega)$, the {\bf discretized Carleson region} $\dd_Q$ is defined to be
\begin{equation}\label{eq2.discretecarl}
\dd_Q:= \left\{Q'\in \dd: Q'\subseteq Q\right\}.
\end{equation}
Given a family $\mathcal{F}$ of disjoint cubes $\{Q_j\}\subset \mathbb{D}$, we define
the {\bf global discretized sawtooth} relative to $\F$ by
\begin{equation}\label{eq2.discretesawtooth1}
\dd_{\F}:=\dd\setminus \bigcup_{\F} \dd_{Q_j}\,,
\end{equation}
i.e., $\dd_{\F}$ is the collection of all $Q\in\dd$ that are not contained in any $Q_j\in\F$.
Given some fixed cube $Q$,
the {\bf local discretized sawtooth} relative to $\F$ by
\begin{equation}\label{eq2.discretesawtooth2}
\dd_{\F,Q}:=\dd_Q\setminus \bigcup_{\F} \dd_{Q_j}=\dd_\F\cap\dd_Q.
\end{equation}

We also introduce the ``geometric'' Carleson regions and sawtooths.
Let us first recall that we write $k=k(Q)$ if $Q\in \mathbb{D}_k$
(cf. Lemma \ref{lemmaCh}), and in that case the ``length'' of $Q$ is denoted
by $\ell(Q)=2^{-k(Q)}$.
We also recall that there is a Corkscrew point $X_Q$, relative to each $Q\in\dd$
(in fact, there are many such, but we just pick one).
Let $\mathcal{W}=\W(\Omega)$ denote a collection
of (closed) dyadic Whitney cubes of $\Omega$, so that the cubes in $\mathcal{W}$
form a pairwise non-overlapping covering of $\Omega$, which satisfy
\begin{equation}\label{eq4.1}
4 \diam(I)\leq
\dist(4I,\partial\Omega)\leq \dist(I,\pom) \leq 40\diam(I)\,,\qquad \forall\, I\in \mathcal{W}\,\end{equation}
and also
$$(1/4)\diam(I_1)\leq\diam(I_2)\leq 4\diam(I_1)\,,$$
whenever $I_1$ and $I_2$ touch.
Let $\ell(I)$ denote the side length of $I$,
and write $k=k_I$ if $\ell(I) = 2^{-k}$.
There are $C_0\ge 1000\sqrt{n}$ and $m_0\ge 0$ large enough (depending only on the constants in the Corkscrew condition and in the
dyadic cube construction) so that for every cube $Q\in\dd$
\begin{equation}\label{eq2.whitney1}
\mathcal{W}_Q := \left\{I\in \mathcal{W}: \,k(Q) -m_0\,\leq \,k_I\,\leq \,k(Q)+1\,,\,{\rm and}
\dist(I,Q)\leq C_0\, 2^{-k(Q)}\right\}\,,
\end{equation}
satisfies that $X_Q\in I$ for some $I\in\W_Q$,
and for each dyadic child $Q^{j}$ of $Q$,
the respective Corkscrew points $X_{Q^{j}}\in I^{j}$ for some $I^{j}\in\W_Q$.
Moreover, we may always find an $I\in \W_Q$ with the slightly more precise property
that $k(Q)-1\leq k_I\leq k(Q)$ and
\begin{equation}\label{eq2.whitney2**}
\W_{Q_1}\cap \W_{Q_2}\neq\emptyset\,,\,\, {\rm whenever}\,\,1\leq
\frac{\ell(Q_2)}{\ell(Q_1)}\leq 2\,,\,\, {\rm and}\,\dist(Q_1,Q_2)\leq1000\ell(Q_2)\,.
\end{equation}

We introduce some notation:  given a subset
$A\subset \Omega$, we write $ X\rightarrow_A Y$
if the interior of $A$ contains all the balls in
a Harnack Chain (in $\Omega$), connecting $X$ to $Y$, and if, moreover,
for any point $Z$ contained
in any ball in the Harnack Chain, we have $
\dist(Z,\pom) \approx \dist(Z,\Omega\setminus A)\,,
$
with uniform control of the implicit constants.   We denote by $X(I)$ the center
of a cube $I\in \ree$, and we recall that $X_Q$ denotes a designated Corkscrew point relative
to $Q$ that can be assumed to be the center of some Whitney cube
$I$ such that $I\subset B_Q\cap\Omega$ and $\ell(I)\approx\ell(Q)\approx \dist(I,Q)$.

For each $I\in \W_Q$, we form a Harnack Chain, call it $H(I)$,
from the center $X(I)$ to the Corkscrew point $X_Q$.   We now denote by $\W(I)$
the collection of all Whitney cubes which meet at least one ball in the chain $H(I)$,
and we set
$$\W_Q^* := \bigcup_{I\in \W_Q} \W(I).$$
We also define, for $\lambda\in(0,1)$ to be chosen momentarily,
\begin{equation}\label{eq2.whitney3}
U_Q := \bigcup_{\mathcal{W}^*_Q} (1+\lambda)I=:
\bigcup_{I\in\,\mathcal{W}^*_Q} I^*\,.
\end{equation}
By construction, we then have that
\begin{equation}\label{eq2.whitney2*}
 \mathcal{W}_Q\subset \mathcal{W}^*_Q\subset\mathcal{W}\,\quad {\rm and}\quad
X_Q\in U_Q\,,\,\, X_{Q^j} \in U_Q\,,
\end{equation}
for each child $Q^j$ of $Q$.
It is also clear that there are uniform constants $k^*$ and $K_0$ such that
\begin{eqnarray}\label{eq2.whitney2}
& k(Q)-k^*\leq k_I \leq k(Q) + k^*\,, \quad \forall I\in \mathcal{W}^*_Q\\\nonumber
&X(I) \rightarrow_{U_Q} X_Q\,,\quad\forall I\in \mathcal{W}^*_Q\\ \nonumber
&\dist(I,Q)\leq K_0\,2^{-k(Q)}\,, \quad \forall I\in \mathcal{W}^*_Q\,,
\end{eqnarray}
where $k^*$, $K_0$ and the implicit constants in the condition $X(I)\to_{U_Q} X_Q$,
depend only  on the ``allowable
parameters'' (since $m_0$ and $C_0$ also have such dependence)
and on $\lambda$.
Thus, by the addition of a few nearby
Whitney cubes of diameter also comparable to that of $Q$,
we can ``augment'' $ \mathcal{W}_Q$ so that the Harnack Chain condition
holds in $U_Q$.

We fix the parameter $\lambda$ so that
for any $I,J\in\W$,
\begin{equation}\label{eq2.29*}
\begin{split}
\dist(I^*,J^*)& \approx \dist(I,J) \\
\interior(I^*)\cap\interior(J^*)\neq \emptyset& \iff \partial I \cap\partial J \neq \emptyset
\end{split}
\end{equation}
(the fattening thus ensures overlap of $I^*$ and $J^*$ for
any pair $I,J \in\W$ whose boundaries touch, so that the
Harnack Chain property then holds locally, with constants depending upon $\lambda$, in $I^*\cup J^*$).  By choosing $\lambda$ sufficiently small,
we may also suppose that there is a $\tau\in(1/2,1)$ such that for distinct $I,J\in\W$,
\begin{equation}\label{eq2.30*}
\tau J\cap I^* = \emptyset\,.
\end{equation}

\begin{remark}\label{remark_II.2.25}
We note that any sufficiently small choice of $\lambda$ (say $0<\lambda\le \lambda_0$) will do for our purposes.
\end{remark}

Of course, there may be some flexibility in the choice of additional Whitney cubes
which we add to form the augmented collection $\mathcal{W}^*_Q$, but having made such a choice
for each $Q \in \mathbb{D}$, we fix it for all time.

We may then define  the {\bf Carleson box}
associated to $Q$ by
\begin{equation}\label{eq2.box}
T_Q:={\rm int}\left( \bigcup_{Q'\in \dd_Q} U_{Q'}\right).
\end{equation}
Similarly, we may define geometric sawtooth regions as follows.
As above, give a family $\mathcal{F}$ of disjoint cubes $\{Q_j\}\subset \mathbb{D}$,
we define the {\bf global sawtooth} relative to $\mathcal{F}$ by
\begin{equation}\label{eq2.sawtooth1}
\Omega_{\mathcal{F}}:= {\rm int } \left( \bigcup_{Q'\in\dd_\F} U_{Q'}\right)\,,
\end{equation}
and again given some fixed $Q\in \dd$,
the {\bf local sawtooth} relative to $\mathcal{F}$ by
\begin{equation}\label{eq2.sawtooth2}
\Omega_{\mathcal{F},Q}:=  {\rm int } \left( \bigcup_{Q'\in\dd_{\F,Q}} U_{Q'}\right)\,.
\end{equation}
We also define as follows the ``Carleson box'' $T_\Delta$
associated to a surface ball $\Delta:=\Delta(x_\Delta,r)$.
Let $k(\Delta)$ denote the unique $k \in \mathbb{Z}$ such that $2^{-k-1}<200\,r\leq 2^{-k}$,
and set
\begin{equation}\label{eq2.box3}
\mathbb{D}^\Delta:= \{Q \in \mathbb{D}_{k(\Delta)}: Q\cap2\Delta \neq \emptyset\}.
\end{equation}
We then define
\begin{equation}\label{eq2.box2}
T_\Delta :={\rm int}\left(\bigcup_{Q\in \mathbb{D}^\Delta} \overline{T_Q}\right).
\end{equation}
Given a
surface ball $\Delta:=\Delta(x,r)$, let $B_\Delta := B(x,r)$, so that
$\Delta =B_\Delta\cap\partial\Omega$.
Then,
\begin{equation}\label{eq2.ball-box}
\frac54 B_\Delta\cap\Omega\subset T_\Delta.
\end{equation}
and
there exists $\kappa_0$ large enough such that
\begin{equation}\label{eq2.contain}
\overline{T_\Delta}\subset \kappa_0B_\Delta\cap\overline{\Omega}.\end{equation}

\medskip

\begin{lemma}[{\cite[Lemma 3.61]{HM-I}}]\label{lemma2.30}  Suppose that $\Omega$ is a  1-sided NTA domain
with an ADR boundary.
Then all of its  Carleson boxes
$T_Q$ and $T_\Delta$, and sawtooth regions
$ \Omega_{\mathcal{F}}$, and $\Omega_{\mathcal{F},Q}$ are also 1-sided NTA domains
with ADR boundaries.
 In all cases, the implicit constants are uniform, and
depend only on dimension and on the corresponding constants for $\Omega$.
\end{lemma}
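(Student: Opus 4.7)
My plan is first to reduce the four statements to a single one. All of $T_Q$, $T_\Delta$, $\Omega_{\F}$, and $\Omega_{\F,Q}$ are constructed as the interior of a union $\bigcup_{Q'} U_{Q'}$ of augmented Whitney regions $U_{Q'} = \bigcup_{I \in \W^*_{Q'}} I^*$, indexed over a sub-collection of $\dd$ (namely $\dd_Q$, $\dd^\Delta$, $\dd_{\F}$, or $\dd_{\F,Q}$, respectively). The local sawtooth $\Omega_{\F,Q}$ covers the others as special cases (e.g.\ $T_Q = \Omega_{\emptyset,Q}$, and $T_\Delta$ is essentially a finite union of $T_{Q'}$'s over $\dd^\Delta$ which is itself amenable to the same argument), so I focus on $\Omega_{\F,Q}$. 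Throughout, all estimates will be uniform because the only parameters that enter are the ADR/Corkscrew/Harnack constants of $\Omega$ together with the fixed geometric parameters $\lambda, k^*, K_0, m_0, C_0$.

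For the interior Corkscrew condition, given $x \in \partial\Omega_{\F,Q}$ and $0 < r < \diam(\partial\Omega_{\F,Q})$, I split into two cases. If $x$ lies on a face of a fattened Whitney cube $I^*$ with $I \in \W^*_{Q'}$ for some $Q' \in \dd_{\F,Q}$ and $\ell(I) \gtrsim r$, then a slightly interior point of $I$ already provides a corkscrew at scale $r$. Otherwise $x \in \pom$ and I locate a cube $Q'' \in \dd_{\F,Q}$ near $x$ with $\ell(Q'') \approx r$; the Corkscrew point $X_{Q''}$ lies in some $I \in \W_{Q''} \subset \W^*_{Q''}$ by \eqref{eq2.whitney1}, and so sits at distance $\gtrsim r$ from $\partial\Omega_{\F,Q}$. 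For the Harnack Chain, the very design of $U_{Q'}$ guarantees $X(I) \to_{U_{Q'}} X_{Q'}$ for each $I \in \W^*_{Q'}$, while \eqref{eq2.whitney2**} ensures that whenever $Q_1, Q_2 \in \dd_{\F,Q}$ are dyadic neighbors of comparable size, $U_{Q_1} \cap U_{Q_2} \neq \emptyset$. Given $X, Y \in \Omega_{\F,Q}$ with $\delta_{\Omega_{\F,Q}}(X), \delta_{\Omega_{\F,Q}}(Y) \geq \rho$ and $|X-Y| \leq \Lambda \rho$, I chain $X \to X_{Q_X}$ inside $U_{Q_X}$, walk through a path of dyadic cubes in $\dd_{\F,Q}$ joining $Q_X$ to $Q_Y$ using the overlaps of adjacent $U_{Q'}$'s, and finish with $X_{Q_Y} \to Y$ inside $U_{Q_Y}$; the chain length is $O(\log \Lambda)$.

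The main obstacle is the ADR property of $\partial\Omega_{\F,Q}$. The upper bound $H^n(\partial\Omega_{\F,Q} \cap B(x,r)) \lesssim r^n$ reduces to a Whitney packing argument: the portion of $\partial\Omega_{\F,Q}$ interior to $\Omega$ is a union of faces of fattened cubes $I^*$ (with $I \in \W^*_{Q'}$, $Q' \in \dd_{\F,Q}$) that are exposed to the complement of the sawtooth, and by the bounded overlap \eqref{eq2.29*}--\eqref{eq2.30*} together with $\ell(I) \approx \ell(Q') \lesssim r$ and $\dist(I,x) \lesssim r$, a standard geometric-series estimate bounds $\sum \ell(I)^n \lesssim r^n$; the portion of $\partial\Omega_{\F,Q}$ lying on $\pom$ is controlled by ADR of $\pom$. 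For the lower bound $H^n(\partial\Omega_{\F,Q} \cap B(x,r)) \gtrsim r^n$, I argue by cases: if $x \in \pom \cap \partial\Omega_{\F,Q}$, then the stopping-time structure of $\F$ ensures that a definite portion of $\pom \cap B(x,r)$ is not absorbed by any $Q_j \in \F$ and hence survives as boundary of the sawtooth (using property $(vi)$ of Lemma \ref{lemmaCh} to rule out pathological clustering); if instead $x$ sits on a Whitney face $\partial I^* \cap \partial J$ with $I \in \W^*_{Q'}$ and $J$ lying outside the sawtooth, the entire face contributes $\approx \ell(I)^n \gtrsim r^n$. Combining these cases gives the full ADR property with uniform constants depending only on the allowable parameters, completing the proof.
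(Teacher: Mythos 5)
This lemma is imported from \cite[Lemma 3.61]{HM-I} and the present paper contains no proof of it, so your attempt can only be judged on its own merits.

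Your overall organizational scheme---reduce to the local sawtooth $\Omega_{\F,Q}$, then verify Corkscrew, Harnack Chain, and ADR separately---is the right plan, and the Corkscrew and Harnack Chain arguments are plausible in outline (though the Corkscrew case ``$x\in\pom$'' silently assumes the existence of a non-stopped cube $Q''\in\dd_{\F,Q}$ near $x$ at the scale $\approx r$, which needs justification when $x$ borders a large cube of $\F$).

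The genuine gap is in the ADR lower bound. You assert that ``the stopping-time structure of $\F$ ensures that a definite portion of $\pom \cap B(x,r)$ is not absorbed by any $Q_j\in\F$,'' but this is simply false in general: $\F$ is an arbitrary pairwise disjoint family, and for $x$ lying on (say) the relative boundary of one very large stopped cube $Q_j$, it is perfectly possible that $\sigma$-almost all of $\pom\cap B(x,r)$ is covered by $Q_j$ (and its stopped neighbors). In that situation essentially none of $\pom\cap B(x,r)$ survives as part of $\partial\Omega_{\F,Q}$, and the entire lower bound must come from the Whitney faces that ``cap off'' $Q_j$ and its neighbors inside $B(x,r)$. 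Your case dichotomy (surviving surface measure on $\pom$, or a single face with $\ell(I)\gtrsim r$) misses exactly this configuration, where $x\in\pom$ yet the boundary near $x$ consists of a cloud of faces of Whitney cubes of sizes $\ll r$. Showing that this cloud of small faces carries total $H^n$-measure $\gtrsim r^n$ is the heart of the lemma; it requires a genuine geometric argument (roughly, that the projection of the exposed Whitney faces onto $\pom$ covers a proportion of $\pom\cap B(x,r)$, together with ADR of $\pom$ and the Whitney geometry \eqref{eq2.whitney2}, \eqref{eq2.29*}--\eqref{eq2.30*}) which your sketch does not supply. The invocation of property $(vi)$ of Lemma~\ref{lemmaCh} to ``rule out pathological clustering'' does not fill this hole: that property controls the thin boundary layer of a single $Q_j$, not the global coverage of $\pom\cap B(x,r)$ by the family $\F$.

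Additionally, the reduction of $T_\Delta$ to the sawtooth case is glossed over: $T_\Delta$ is defined via \eqref{eq2.box2} as the interior of a union of closures $\overline{T_Q}$ over $Q\in\dd^\Delta$, not as a sawtooth $\Omega_{\F,Q}$, and while the same techniques apply, the union-of-closures and the ``finitely many neighboring $Q$'s'' structure need a short dedicated argument rather than a one-line appeal to the sawtooth case.
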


\medskip

\begin{lemma}[{\cite[Lemma 3.62]{HM-I}}]\label{lemma:inher-quali}
Suppose that $\Omega$ is a 1-sided NTA domain with an ADR boundary and that $\Omega$ also satisfies the qualitative exterior
Corkscrew condition. Then all of its  Carleson boxes
$T_Q$ and $T_\Delta$, and sawtooth regions $ \Omega_{\mathcal{F}}$, and $\Omega_{\mathcal{F},Q}$ satisfy the qualitative exterior
Corkscrew condition. In all cases, the implicit constants are uniform, and
depend only on dimension and on the corresponding constants for $\Omega$.
\end{lemma}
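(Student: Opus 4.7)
The goal is to verify Definition \ref{qualextCS} for each of $T_Q$, $T_\Delta$, $\Omega_{\F}$ and $\Omega_{\F,Q}$. Since each of these is the interior of a union of fattened Whitney cubes $\bigcup_{I\in\mathcal{G}}I^*$ for an appropriate collection $\mathcal{G}$ (namely $\bigcup_{Q'\in\dd_Q}\W^*_{Q'}$, $\bigcup_{Q'\in\dd_\F}\W^*_{Q'}$, etc.), it suffices to run a single argument parameterized by $\mathcal{G}$. I will write it out for $\Omega'=T_Q$. Let $N$ and $c_N$ be the qualitative exterior Corkscrew constants of $\Omega$; I will choose a threshold $N'\ge N$ and a new constant $c_{N'}$ below. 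Fix $y\in\partial T_Q$ and $0<r\le 2^{-N'}$, and set $M=100$ (say). The plan is to split on whether $y$ is close to $\pom$ on scale $r$.

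\textbf{Case 1:} $\delta_\Omega(y)\le r/M$. Pick $y^*\in\pom$ with $|y-y^*|=\delta_\Omega(y)$. Applying the qualitative exterior Corkscrew condition of $\Omega$ to the surface ball $\Delta(y^*,r/2)$ (which is admissible as soon as $N'\ge N+1$) I obtain a ball $B(X^{ext},c_N r/2)\subset B(y^*,r/2)\cap\Omega_{\mathrm{ext}}$. Since $B(y^*,r/2)\subset B(y,r)$ for $M\ge 2$, and $\Omega_{\mathrm{ext}}\subset (T_Q)_{\mathrm{ext}}$, this ball is an exterior corkscrew for $T_Q$ at $y$ at scale $r$.

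\textbf{Case 2:} $\delta_\Omega(y)>r/M$. Then $y\in\Omega\cap\partial T_Q$. Since $T_Q$ is open, $y$ cannot lie in any $I^*$ with $I\in\mathcal{G}$ unless $y\in\partial I^*$, and even then the adjacent Whitney cube $J$ across the face of $I^*$ on which $y$ sits must satisfy $J\notin\mathcal{G}$: otherwise by the overlap property \eqref{eq2.29*} the point $y$ would lie in the interior of $I^*\cup J^*\subset T_Q$, contradicting $y\in\partial T_Q$. By \eqref{eq2.30*}, $\tau J\cap I^{*}=\emptyset$ for every Whitney cube $I\ne J$, so in fact $\tau J\subset (T_Q)_{\mathrm{ext}}$. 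The Whitney property \eqref{eq4.1} gives $\ell(I)\approx\ell(J)\approx\delta_\Omega(y)\gtrsim r/M$, so $\tau J$ is a cube of side $\tau\ell(J)$ lying within bounded distance ($\lesssim \lambda\ell(I)$) of $y$. Translating $y$ by a distance of order $r$ along the outward normal to $I^*$ lands inside $\tau J$, giving a ball $B(X^{ext},c^* r)\subset B(y,r)\cap\tau J\subset B(y,r)\cap(T_Q)_{\mathrm{ext}}$, where $c^*$ depends only on $M$, $\lambda$, $\tau$, and the Whitney and ADR constants.

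Combining the two cases produces exterior corkscrew points at every scale $r\le 2^{-N'}$ with a uniform constant, which is the required condition. The arguments for $\Omega_{\F}$ and $\Omega_{\F,Q}$ are identical, with $\mathcal{G}$ replaced by the corresponding Whitney collection. For $T_\Delta$ one first notes from \eqref{eq2.box2} that $T_\Delta$ is likewise the interior of a union of fattened Whitney cubes (over $\bigcup_{Q\in\dd^\Delta}\dd_Q$), then applies the same dichotomy.

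The one genuine point to verify carefully is Case 2: the existence of a ``missing'' neighbor $J\notin\mathcal{G}$ along whose $\tau$-shrunk copy we have genuine exterior room. Both pieces — the existence of $J$, and the disjointness $\tau J\cap T_Q=\emptyset$ — are immediate consequences of the carefully calibrated fattening/shrinking of Whitney cubes recorded in \eqref{eq2.29*}--\eqref{eq2.30*}. Everything else is bookkeeping: choose $N'$ large enough that $r\le 2^{-N'}$ triggers the $\Omega$-side hypothesis in Case 1, and set $c_{N'}=\min(c_N/2,c^*)$.
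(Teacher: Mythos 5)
Your reduction to a single collection $\mathcal{G}$ is fine, Case~1 is fine, and the observation that the Whitney cube $J\ni y$ cannot lie in $\mathcal{G}$ (hence $\tau J\subset(\Omega')_{\mathrm{ext}}$ by \eqref{eq2.30*}) is correct. The gap is in the last step of Case~2, and it is genuine. Case~2 gives only the one-sided bound $\ell(J)\approx\delta_\Omega(y)>r/M$; it does \emph{not} bound $\ell(J)$ above in terms of $r$, and in fact for any fixed $y$ with $\delta_\Omega(y)>0$ all sufficiently small $r$ fall into Case~2 while $\ell(J)$ stays fixed, so $r\ll\ell(J)$ is the generic situation there. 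But $\tau J$ lies at distance comparable to $(1-\tau)\ell(J)$ from $\partial J$, and since the very inequality $1-\tau>C\lambda$ needed for \eqref{eq2.30*} forces this distance to be $\gtrsim\ell(J)$, once $r\ll\ell(J)$ the ball $B(y,r)$ is \emph{disjoint} from $\tau J$. Thus the sentence ``translating $y$ by a distance of order $r$ along the outward normal to $I^*$ lands inside $\tau J$'' is simply false in the main subcase, and the claimed bound $\dist(y,\tau J)\lesssim\lambda\ell(I)$ is also wrong in general (only the normal coordinate is controlled at that scale; in the tangential coordinates $y$ can be a full $\ell(J)$ from $\tau J$). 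A secondary imprecision: if $y$ is at an edge or corner of $I^*$, ``the adjacent Whitney cube across the face'' is not a well-defined object; the safe choice is $J=$ the Whitney cube containing $y$, as you use elsewhere.

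What is actually needed when $r\ll\ell(J)$ is not to reach $\tau J$ but to escape the locally finite family of fattened boxes directly. Since $y\in\partial\Omega'$, $y$ is not interior to any $(I')^*$ with $I'\in\mathcal{G}$. By \eqref{eq2.29*}, the only $(I')^*$ with $I'\in\mathcal{G}$ that can meet $B(y,r)$ when $r\ll\ell(J)$ come from Whitney neighbors of $J$; these are at most $C(n)$ boxes, all of size $\approx\ell(J)\gg r$ with axis-parallel faces, each having $y$ on its boundary rather than its interior. Two such boxes cannot be separated by a gap of width $\ll\ell(J)$ (again by \eqref{eq2.29*}), and their union does not cover a neighborhood of $y$ (else $y\in\Omega'$), so at scale $r$ their complement near $y$ is a ``polyhedral cone'' of uniformly positive solid angle, which supplies the exterior corkscrew ball. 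This polyhedral/``octant'' step is precisely the content of the lemma and is what is missing from your Case~2. (Compare the paper's sketch for $\mathbb{P}$ in the proof of Lemma~\ref{lemma:Poly-NTA}, where the corkscrew is found on the segment from $y$ toward the center of $J$ by escaping the ``bites'' of neighboring $(J')^*$ --- an argument of this type, not one that reaches $\tau J$.)
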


\begin{corollary}[{\cite[Corollary 3.69]{HM-I}}]\label{cor2.poles}
Suppose that $\Omega$ is a  1-sided NTA domain with $n$-dimensional ADR boundary and that it also satisfies the qualitative exterior
Corkscrew condition. There is a uniform constant $C$ such that for every pair of surface balls
$\Delta:=B\cap\partial\Omega$,
and $\Delta':= B'\cap\partial\Omega$,  with $B'\subseteq B,$ and
for every $X\in \Omega\setminus 2\kappa_0B,$
where $\kappa_0$ is the constant in \eqref{eq2.contain} below, we have
$$\frac1C\, \omega^{X_\Delta}(\Delta') \,\leq\, \frac{\omega^X(\Delta')}{\omega^X(\Delta)}\,
\leq \,C\, \omega^{X_\Delta}(\Delta').$$
\end{corollary}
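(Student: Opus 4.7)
The plan is to combine three applications of the CFMS estimates of Lemma~\ref{lemma2.cfms} with a boundary-Harnack--type comparison. Since $X\in\Omega\setminus 2\kappa_0 B$ and $B'\subseteq B$, both $X\notin 2B$ and $X\notin 2B'$ hold, so two applications of Lemma~\ref{lemma2.cfms} yield
\[
\omega^X(\Delta)\approx r^{\,n-1}G(X_\Delta,X),\qquad \omega^X(\Delta')\approx (r')^{\,n-1}G(X_{\Delta'},X).
\]

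I would then split into two regimes according to the ratio $r'/r$. In the comparable case $r'\ge c_0\, r$, for a fixed $c_0$ depending only on the Corkscrew constant, both corkscrew points $X_\Delta$ and $X_{\Delta'}$ lie in $B\cap\Omega$ at distance $\approx r$ from $\partial\Omega$ and within distance $\lesssim r$ of each other. The Harnack Chain condition then produces a bounded-length Harnack chain joining them in $\Omega$, so Harnack's inequality forces $G(X_\Delta,X)\approx G(X_{\Delta'},X)$ and hence $\omega^X(\Delta')/\omega^X(\Delta)\approx 1$. A companion Harnack + Bourgain argument (applying Lemma~\ref{Bourgainhm} to a surface ball contained in $\Delta'$ for which a Harnack-chain neighbor of $X_\Delta$ serves as a corkscrew point) produces $\omega^{X_\Delta}(\Delta')\approx 1$, which closes this regime.

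In the complementary regime $r'<c_0\, r$, we have $\delta(X_\Delta)\gtrsim r>2r'$, so $X_\Delta\notin 2B'$; a third application of Lemma~\ref{lemma2.cfms}, now with pole $X_\Delta$, then gives
\[
\omega^{X_\Delta}(\Delta')\approx (r')^{\,n-1}G(X_{\Delta'},X_\Delta).
\]
Substituting the three resulting CFMS identities, the entire chain of inequalities of the corollary becomes equivalent to the single Green's function comparison
\[
G(X_{\Delta'},X)\;\approx\;\omega^X(\Delta)\cdot G(X_{\Delta'},X_\Delta).
\]

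The main obstacle is this last comparison. The natural route is a boundary Harnack principle for the two positive harmonic functions $Y\mapsto G(X_{\Delta'},Y)$ and $Y\mapsto \omega^Y(\Delta)$ on the region $\Omega\setminus(\overline{2B}\cup\{X_{\Delta'}\})$: both vanish on $\partial\Omega\setminus\overline{\Delta}$ thanks to the Wiener regularity supplied by the qualitative exterior Corkscrew hypothesis, and BHP would then make the ratio nearly constant there, so that evaluating at $Y=X$ versus $Y=X_\Delta$ and normalizing by $\omega^{X_\Delta}(\Delta)\approx 1$ (Lemma~\ref{Bourgainhm}) yields the estimate. To keep all constants independent of the qualitative parameter $N$ of Definition~\ref{qualextCS}, I would replace the BHP step by an argument relying only on Lemma~\ref{lemma2.cfms}, the doubling of Corollary~\ref{cor2.double}, and Harnack-chain manipulations along the interior boundary of a suitable Carleson box $T_\Delta$ (at which every point has $\delta\approx r$ and lies within a bounded Harnack chain of $X_\Delta$); this $N$-independent implementation is the route I expect the authors take in \cite[Corollary 3.69]{HM-I}.
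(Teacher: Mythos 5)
Your opening reduction is correct and well organized: two applications of Lemma~\ref{lemma2.cfms} convert both $\omega^X(\Delta)$ and $\omega^X(\Delta')$ into Green function values (the hypothesis $X\notin 2\kappa_0 B$ and $B'\subseteq B$ does supply the needed separation for CFMS in both cases), the comparable regime $r'\gtrsim r$ is correctly dispatched by Harnack chains plus Lemma~\ref{Bourgainhm}, and a third CFMS application in the regime $r'\ll r$ correctly reduces the whole corollary to the change-of-pole estimate
\[
G(X_{\Delta'},X)\;\approx\;\omega^X(\Delta)\cdot G(X_{\Delta'},X_\Delta)\,,
\qquad X\in\Omega\setminus 2\kappa_0 B\,.
\]
Your concern about the $N$-dependence of any bound extracted directly from the NTA comparison principle \cite[Lemma 4.10]{JK} is also legitimate: the qualitative exterior corkscrew constant is allowed to degenerate with $N$, so any route that invokes that lemma in $\Omega$ verbatim would not give uniform constants.

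The genuine gap is that you never actually prove the displayed estimate. You correctly suspect that it can be established with $N$-independent constants from the paper's ``uniform'' tools, and you name some of the right ingredients (Bourgain's lemma, Harnack chains along $\partial T_\Delta\cap\Omega$, where $\delta\approx r$ and every point connects to $X_\Delta$ by a bounded chain), but you omit the one step that makes the argument close: the maximum principle applied in $\Omega\setminus\overline{T_\Delta}$. Concretely, set $u(Y)=G(X_{\Delta'},Y)$ and $v(Y)=G(X_{\Delta'},X_\Delta)\,\omega^Y(\Delta)$. Both are nonnegative, harmonic in $\Omega\setminus\overline{T_\Delta}$ (the poles $X_{\Delta'},X_\Delta$ lie inside $\overline{T_\Delta}$ when $r'\ll r$), and vanish on $\pom\setminus\overline{T_\Delta}$ by Wiener regularity. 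On the interior boundary $\partial T_\Delta\cap\Omega$, each point $Y$ has $\delta(Y)\approx r$ and is joined to $X_\Delta$ by a bounded Harnack chain avoiding the pole $X_{\Delta'}$ (which sits at depth $\approx r'\ll r$), so $u(Y)\approx G(X_{\Delta'},X_\Delta)$, while $\omega^Y(\Delta)\approx 1$ by Lemma~\ref{Bourgainhm} and Harnack, so $v(Y)\approx G(X_{\Delta'},X_\Delta)$ too. Thus $u\approx v$ on all of $\partial(\Omega\setminus\overline{T_\Delta})$, and the maximum principle (together with decay at infinity if $\Omega$ is unbounded, handled as in the paper's Step~1) propagates this to $X\in\Omega\setminus 2\kappa_0 B\subset\Omega\setminus\overline{T_\Delta}$, using \eqref{eq2.contain}. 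Without this max-principle step, knowing comparability on $\partial T_\Delta\cap\Omega$ says nothing at the distant pole $X$. (Note also that neither Lemma~\ref{lemma2.cfms} nor Corollary~\ref{cor2.double} is actually what does the work at this final stage, despite being on your list.) In fact, essentially the same maximum-principle argument, applied directly to $\omega^Y(\Delta')$ versus $\omega^{X_\Delta}(\Delta')\,\omega^Y(\Delta)$ on $\Omega\setminus\overline{T_\Delta}$, proves the corollary in one stroke, without the CFMS reduction or the case split, which would be the more economical route.
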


\begin{remark}\label{remark:lambda0}  Let us recall that the dilation factor $\lambda$, defining the fattened Whitney boxes $I^*$ and Whitney regions $U_Q$,
is allowed to be any fixed positive number no larger than some small $\lambda_0$
(cf. \eqref {eq2.whitney3} and Remark \ref{remark_II.2.25}).
For the rest of the paper we now fix $0<\lambda\le \lambda_0/2$, so that, in particular, the previous results apply not only to the Carleson boxes and sawtooths corresponding to $U_Q$ as defined above,  but also to those corresponding to ``fatter" Whitney regions
$U_Q^*=\cup_{\mathcal{W}^*_Q} I^{**}$ with $I^{**}=(1+2\,\lambda)\,I$.
We will work with these fatter regions in Sections \ref{section:good-lambda}
and \ref{section:proof-S} below.
\end{remark}

\subsection{Step 1: Passing to the approximating domains}

We first observe that without loss of generality we can assume that $p\le 2$: If $p>2$, \eqref{eqn:scale-inva:converse} and H\"{o}lder's inequality imply the same estimate with $p=2$.

We define approximating domains as follows.
For each large integer $N$,  set $\F_N := \dd_N$.  We then let
$\Omega_N := \Omega_{\F_N}$ denote the usual (global) sawtooth with respect to the family
$\F_N$ (cf. \eqref{eq2.whitney2}, \eqref{eq2.whitney3} and \eqref{eq2.sawtooth1}.)  Thus,
\begin{equation}\label{eq7.on}
\Omega_N =\interior\left(\bigcup_{Q\in \dd:\,\ell(Q)\geq 2^{-N+1}}U_Q\right),
\end{equation}
so that $\overline{\Omega_N}$ is the union of fattened Whitney cubes $I^*=(1+\lambda)I$, with $\ell(I)\gtrsim 2^{-N}$, and the boundary of $\Omega_N$ consists of portions of faces of $I^*$ with $\ell(I)\approx 2^{-N}$.
By virtue of Lemma \ref{lemma2.30}, each $\Omega_N$ satisfies the ADR,
Corkscrew and Harnack Chain properties.  We note that, for each of these properties, the constants are uniform in $N$, and depend only on dimension and on the corresponding constants for $\Omega$.

By construction $\Omega_N$ satisfies the qualitative exterior corkscrew condition
(Definition \ref{qualextCS}) since it has exterior corkscrew points at
all scales $\lesssim 2^{-N}$.  By Lemma \ref{lemma:inher-quali} the same statement applies to the Carleson boxes $T_Q$ and $T_\Delta$, and to the sawtooth domains $\Omega_\F$ and $\Omega_{\F,Q}$  (all of them relative to $\Omega_N$)  and even to Carleson boxes within sawtooths.

We write $\omega_N$ for the corresponding harmonic measure and $k_N$ for the corresponding Poisson kernel (we know by \cite{DJe} that $\omega_N$ is absolutely continuous with respect to surface measure, since $\Omega_N$ enjoys a {\it qualitative} 2-sided Corkscrew condition, i.e.,
it has exterior Corkscrew points at all scales $\lesssim 2^{-N}$).
We are going to show that the scale invariant  estimate \eqref{eqn:scale-inva:converse} passes
uniformly to the approximating domains.
To be precise, for all surface balls
$\Delta^N$, defined with respect to the approximating domain $\Omega_N$  (i.e., $\Delta^N=B\cap \pom_N$ with $B$ centered at $\pom_N$), we have
\begin{equation}\label{eqn:scale-inva:converse-approx}
\int_{\Delta^N} \left(k_N^{X_{\Delta^N}}\right)^{\tilde{p}} d\sigma_N \leq \widetilde{C}\, \sigma_N(\Delta^N)^{1-{\tilde{p}}},
\end{equation}
where $\widetilde{C}$ and $1<\tilde{p}\le p$ are independent of $N$.  To prove \eqref{eqn:scale-inva:converse-approx},
we shall first need to establish several preliminary facts.

Given $X\in\Omega_N\subset\Omega$  we write $\hat{x}$ for a point in $\pom$ such that $\delta(X)=|X-\hat{x}|$, analogously $\hat{x}_N\in\pom_N$  with $\delta_N(X)=|X-\hat{x}_N|$ (here $\delta_N$ stands for the distance to the boundary of $\Omega_N$). Set $\Delta_X=B(\hat{x},\delta(X))\cap\pom$ and $\Delta_X^N=B(\hat{x}_N,\delta_N(X))\cap\pom_N$.

Let  $X\in \Omega_N\subset\Omega$ with $\delta_N(X)\approx 2^{-N}$, thus $\delta(X)\approx 2^{-N}$. We claim that for every $Y\in \Omega_N$ we have
\begin{equation}\label{CFMS-approx}
\frac{\omega_N^Y (\Delta_X^N)}{\sigma_N(\Delta_X^N)}
\le
C\,\frac{\omega^Y (\Delta_X)}{\sigma(\Delta_X)},
\end{equation}
where $C$ is independent of $N$.

We first consider the case $\Omega_N$ bounded. For fixed $Y\in \Omega_N$ we set $u(Y)=\omega_N^Y(\Delta_X^N)$ and $v(Y)=\omega^Y(\Delta_X)$. Note that $u$, $v$ are harmonic in $\Omega_N$. For every $Y\in \Delta_X^N$, we have $u(Y)=1\lesssim \omega^Y(\Delta_X)=v(Y)$ by Lemma \ref{Bourgainhm} and the Harnack chain condition. Also, if $Y\in \pom_N\setminus \Delta_X^N$ we have $u(Y)=0\le \omega^Y(\Delta_X)=v(Y)$. Thus, maximum principle yields that $u(Y)\lesssim v(Y)$ for every $Y\in \Omega_N$ and then we obtain as desired
$$
\frac{\omega_N^Y (\Delta_X^N)}{\sigma_N(\Delta_X^N)}
\le
C\,\frac{\omega^Y (\Delta_X)}{\sigma(\Delta_X)},
$$
where $C$ is independent of $N$ (notice that we have used that $\sigma_N(\Delta_X^N)\approx \delta_N(X)^n\approx\delta(X)^n\approx\sigma(\Delta_X)$.)

Let us treat the case $\Omega_N$ unbounded. Let $M\gg 1$ and set $B^N_M=B(\hat{x}_N,M\,\delta_N(X))$ and $\Delta^N_M=B^N_M\cap \pom_N$. Take $\Omega_{N,M}=T_{\Delta^N_M}$ where $T_{\Delta^N_M}\subset \Omega_N$ denotes the Carleson box corresponding to $\Delta^N_M$ for the domain $\Omega_N$. By \eqref{eq2.ball-box} and \eqref{eq2.contain} (with $\Omega$ replaced by $\Omega_N$) we have
$$
\frac54 B_N^M\cap\Omega_N\subset \Omega_{N,M},
\qquad\qquad
\overline{\Omega_{N,M}}\subset \kappa_0 B_N^M \cap\overline{\Omega_N}.
$$
These and the fact that $M\gg 1$ allow us to obtain that $X\in \Omega_{N,M}$, $\hat{x}_N\in \pom_N\cap \pom_{N,M}$, $\delta_{\Omega_{N,M}}(X)=\delta_N(X)\approx  2^{-N}$, and
$$
\Delta_X^N
=
B(\hat{x}_N,\delta_N(X))\cap\pom_N
=
B(\hat{x}_N,\delta_N(X))\cap\pom_{N,M}.
$$
Then, we proceed as in the previous case and consider $u_M(Y)=\omega_{\Omega_{N,M}}^Y(\Delta_X^N)$ and $v(Y)=\omega^Y(\Delta_X)$ for $Y\in \Omega_{N,M}$. The same argument above and the maximum principle in the bounded domain $\Omega_{N,M}$ yields $u_M(Y)\lesssim v(Y)$ for every $Y\in \Omega_{N,M}$ with constant independent on $N$, $M$ and $X$. On the other hand notice that the solutions $u_M(Y)= \omega_{\Omega_{N,M}}^Y(\Delta_X^N)$ are monotone increasing on any fixed $\Omega_{N,M_0}$, as $M_0\leq M\to\infty$, by the maximum principle.  We then obtain that $u_M(Y)\to u(Y):=\hm_N^X(\Delta_X^N)$,
uniformly on compacta, by Harnack's convergence theorem. Thus, $u(Y)\lesssim v(Y)$ for every $Y\in \Omega_N$ and then the previous argument leads to the desired estimate.

\begin{proposition}\label{prop:RHp-scale-N}
There exists $1<\tilde{p}\le p$ such that for all $N\gg 1$ if we let $\Delta^N=B\cap\Omega_N$ be a surface ball with $r(B)\approx 2^{-N}$, then
\begin{equation}\label{RHp-scale-N-cks}
\left(\fint_{\Delta^N} \left(k_N^{X_{\Delta^N}}\right)^{\tilde{p}}\,d\sigma_N\right)^{\frac1{\tilde{p}}}
\lesssim
\sigma_{N}(\Delta^N)^{-1}
\end{equation}
where the constant is uniform in $N$. Furthermore, for every  $Y\in\Omega_N\setminus 2\kappa_0B$ we have
\begin{equation}\label{RHp-scale-N}
\left(\fint_{\Delta^N} \left(k_N^Y\right)^{\tilde{p}}\,d\sigma_N\right)^{\frac1{\tilde{p}}}
\lesssim
\frac{\omega_N^Y(\Delta^N)}{\sigma_N(\Delta^N)},
\end{equation}
where the constant is uniform in $N$.
\end{proposition}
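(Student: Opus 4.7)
My plan is to reduce the full statement \eqref{RHp-scale-N} to its special case \eqref{RHp-scale-N-cks} via change of pole in $\Omega_N$, and then to prove \eqref{RHp-scale-N-cks} by combining the hypothesis \eqref{eqn:scale-inva:converse} on $\partial\Omega$ with the comparison estimate \eqref{CFMS-approx} and a uniform-in-$N$ reverse H\"older estimate for $k_N$ at the cutoff scale $2^{-N}$ coming from the local flatness of $\partial\Omega_N$ at sub-Whitney scales.

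For the reduction: $\Omega_N$ is 1-sided NTA and ADR, and by Lemma \ref{lemma:inher-quali} also satisfies the qualitative exterior corkscrew condition, so Corollary \ref{cor2.poles} applied to $\Omega_N$ gives $\omega_N^Y(F) \approx \omega_N^Y(\Delta^N)\,\omega_N^{X_{\Delta^N}}(F)$ for every Borel $F \subset \Delta^N$ and every $Y \in \Omega_N \setminus 2\kappa_0 B$. Since Lemma \ref{Bourgainhm} applied to $\Omega_N$ yields $\omega_N^{X_{\Delta^N}}(\Delta^N) \gtrsim 1$, differentiating in $\sigma_N$ produces $k_N^Y \approx \omega_N^Y(\Delta^N)\,k_N^{X_{\Delta^N}}$ on $\Delta^N$, so \eqref{RHp-scale-N} follows at once from \eqref{RHp-scale-N-cks}. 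Setting $X := X_{\Delta^N}$, we have $\delta(X) \approx \delta_N(X) \approx 2^{-N}$ (because $\partial\Omega_N$ lies within distance $\lesssim 2^{-N}$ of $\partial\Omega$), so $X$ plays the role of a corkscrew in $\Omega$ for the surface ball $\Delta^* := \Delta(\hat x, K\cdot 2^{-N}) \subset \partial\Omega$; hypothesis \eqref{eqn:scale-inva:converse} at $\Delta^*$, together with Harnack to replace $X_{\Delta^*}$ by $X$, then gives $\int_{\Delta^*}(k^X)^p\,d\sigma \lesssim \sigma(\Delta^*)^{1-p}$.

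To transfer this bound to $\partial\Omega_N$, cover $\Delta^N$ by a bounded-overlap collection $\{\Delta^N_j\}_j$ of surface balls in $\partial\Omega_N$ of radius $c\cdot 2^{-N}$ (with $c$ suitably small), and let $X_j$ be an $\Omega_N$-corkscrew for $\Delta^N_j$. Since $\delta_N(X_j) \approx 2^{-N}$, \eqref{CFMS-approx} is directly applicable with $X_j$ in the role of ``$X$'' and $Y = X$ the pole, yielding
\[
\omega_N^X(\Delta^N_j) \lesssim \omega^X(\Delta^*_j),
\]
where $\Delta^*_j \subset \partial\Omega$ is a companion surface ball of radius $\approx 2^{-N}$ centered at the $\partial\Omega$-projection of the center of $\Delta^N_j$; the family $\{\Delta^*_j\}_j$ also has bounded overlap and fits inside an enlarged surface ball $\Delta^{**} \subset \partial\Omega$ of radius $\approx 2^{-N}$. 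This controls the $\sigma_N$-average of $k_N^X$ on each $\Delta^N_j$ by the $\sigma$-average of $k^X$ on $\Delta^*_j$.

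To promote these averages to $L^{\tilde p}$ norms, I would use that at scales $\lesssim 2^{-N}$ the boundary $\partial\Omega_N$ is a union of flat faces of fattened Whitney cubes of side $\approx 2^{-N}$, so inside each $B(x_j, C\cdot 2^{-N})$ the domain $\Omega_N$ is Lipschitz with Lipschitz character depending only on the Whitney geometry. Rescaling by the factor $2^N$ reduces to a unit-scale Lipschitz setting in which Dahlberg's theorem furnishes a reverse H\"older estimate for $k_N^X$ on $\Delta^N_j$ at some exponent $q > 1$ uniform in $N$. Setting $\tilde p := \min(p, q) > 1$ and using Jensen to drop from $q$ to $\tilde p$ yields
\[
\int_{\Delta^N_j}(k_N^X)^{\tilde p}\,d\sigma_N \lesssim \omega_N^X(\Delta^N_j)^{\tilde p}\,\sigma_N(\Delta^N_j)^{1-\tilde p} \lesssim \omega^X(\Delta^*_j)^{\tilde p}\,\sigma(\Delta^*_j)^{1-\tilde p}.
\]
Summing over $j$, applying Jensen on each $\Delta^*_j$ and using bounded overlap, bounds the total by $\int_{\Delta^{**}}(k^X)^{\tilde p}\,d\sigma$, which is at most $C\,\sigma(\Delta^{**})^{1-\tilde p} \approx \sigma_N(\Delta^N)^{1-\tilde p}$ by \eqref{eqn:scale-inva:converse} at $\Delta^{**}$ (combined with H\"older to pass from $p$ to $\tilde p$). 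The main obstacle is justifying the local reverse H\"older estimate for $k_N^X$ at scale $2^{-N}$ with constants independent of $N$; an alternative bypass is to establish the pointwise bound $k_N^X(x) \lesssim 2^{Nn}$ on $\Delta^N$ via the comparison $G_N \le G$ and a Hopf-type estimate exploiting the local flatness of $\partial\Omega_N$, which would also permit $\tilde p = p$.
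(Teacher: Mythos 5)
Your reduction of \eqref{RHp-scale-N} to \eqref{RHp-scale-N-cks} via Corollary \ref{cor2.poles} matches the paper, and your use of \eqref{CFMS-approx} to compare $\omega_N^X$ and $\omega^X$ at scale $2^{-N}$ is also the right idea. However, the core of \eqref{RHp-scale-N-cks}---a reverse H\"older inequality for $k_N^{X_{\Delta^N}}$ at the cutoff scale, uniform in $N$---is exactly where your proposal leaves a genuine gap, which you yourself flag as ``the main obstacle.'' The difficulty is not merely technical: Dahlberg's theorem (or a rescaled version of it) would deliver a reverse H\"older estimate for the Poisson kernel of the \emph{local} domain you cut out near $\partial\Omega_N$, not for $k_N$, which is the Poisson kernel of the entire approximating domain $\Omega_N$. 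Transferring the estimate from one kernel to the other is precisely the missing step. Two further inaccuracies: near $\partial\Omega_N$ the domain is a union of fattened axis-parallel Whitney boxes, which is polyhedral NTA but need not be locally a Lipschitz graph, so the relevant black box is David--Jerison for NTA domains with ADR boundary, or Verchota--Vogel for polyhedral NTA domains, not Dahlberg; and your proposed ``bypass'' via a pointwise bound $k_N^X\lesssim 2^{Nn}$ would require a Hopf-type boundary estimate at small scales with constants uniform in $N$, which is itself a nontrivial claim that is not established by the comparison $G_N\le G$ alone.

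The paper closes this gap as follows. It builds a local auxiliary domain $\mathbb{P}=\interior\big(\bigcup_{Q\in\dd^\Delta}\overline{\Omega_{\F_N,Q}}\big)$, a finite union of sawtooth regions of diameter $\approx 2^{-N}$, and proves in Lemma \ref{lemma:Poly-NTA} that $\mathbb{P}$ is a genuine NTA domain with ADR boundary, with all constants independent of $N$; it then invokes David--Jerison \cite{DJe} (or alternatively Verchota--Vogel \cite{VV}, since $\mathbb{P}$ is polyhedral) to obtain $k_\mathbb{P}\in RH_{\hat p}$ with $\hat p$ and the $RH$ constant uniform in $N$. The crucial transfer step is a Green function comparison: for $y\in\Delta^N$ the paper localizes further to $\mathbb{P}_{I^*}$ (the union of the fattened Whitney boxes composing $\mathbb{P}$ that overlap $I^*$, where $y\in\partial I^*$), checks that $\mathbb{P}_{I^*}$ is also uniformly NTA, and applies the Jerison--Kenig comparison principle \cite[Lemma 4.10]{JK} there to the two nonnegative harmonic functions $G_N(\cdot,X_{\Delta^N})$ and $G_\mathbb{P}(\cdot,X_{\Delta^N})$, both vanishing near $y$. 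This yields $G_N\approx G_\mathbb{P}$ in a small ball about $y$, and then via Lemma \ref{lemma2.cfms} and Lebesgue differentiation, $k_N^{X_{\Delta^N}}\approx k_\mathbb{P}^{X_{\Delta^N}}$ a.e.\ on $\Delta^N$, so that the reverse H\"older estimate transfers from $k_\mathbb{P}$ to $k_N$. If you want to salvage your covering scheme, the missing ingredient is precisely this kind of Green function comparison in a uniformly NTA sub-domain straddling $\Delta_j^N$; once you have $k_N^X\approx k_{\mathbb{P}}^X$ on each piece, the rest of your summation argument is fine, though the paper avoids the covering altogether by establishing the pointwise comparison directly on all of $\Delta^N$.
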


\begin{proof}
Let us momentarily assume \eqref{RHp-scale-N-cks} and we obtain \eqref{RHp-scale-N}.  As observed above $\Omega_N$ is a $1$-sided NTA domain with ADR boundary and satisfies the qualitative exterior corkscrew condition.  Thus we can use Corollary \ref{cor2.poles}: we take $\Delta^N=B\cap\Omega_N$ with $r(B)\approx 2^{-N}$ and
for every  $Y\in\Omega_N\setminus 2\kappa_0B$ we have
$$
\fint_{(\Delta^N)'} k_N^Y\,d\sigma_N
\approx
\omega_N^Y(\Delta^N)\,\fint_{(\Delta^N)'} k_N^{X_{\Delta^N}}\,d\sigma_N,
\qquad B'\subset B.
$$
Then for $\sigma_N$-a.e. $x\in\Delta^N$ we take $B'=B(x,r)$ and let $r\to 0$ to obtain $k_N^Y(x)\approx \omega_N^Y(\Delta^N)\,k_N^{X_{\Delta^N}}(x)$ with constants that do not depend on $N$. Consequently, we have
$$
\left(\fint_{\Delta^N} \left(k_N^Y\right)^{\tilde{p}}\,d\sigma_N\right)^{\frac1{\tilde{p}}}
\lesssim
\omega_N^Y(\Delta^N)
\,
\left(\fint_{\Delta^N} \left(k_N^{X_{\Delta^N}}\right)^{\tilde{p}}\,d\sigma_N\right)^{\frac1{\tilde{p}}},
\qquad
Y\in\Omega_N\setminus 2\kappa_0B.
$$
This and \eqref{RHp-scale-N-cks} clearly give \eqref{RHp-scale-N}.

We next show \eqref{RHp-scale-N-cks}. Write $\Delta^N=\Delta^N(x_N,r)=B(x_N,r)\cap \pom_N$ with $x_N\in \pom_N$ and $r\approx 2^{-N}$. We take $x\in \pom$
such that $\delta(x_N)=|x_N-x|\approx 2^{-N}$ and set
$\Delta=\Delta(x,M\,2^{-N})$ with  $M$ (independent of $N$) large enough.
Note that
\begin{equation}\label{cover}
B(x_N,4r)\cap\Omega_N \subset \cup_I\,I^*\,,
\end{equation}
where the union
runs over a collection of Whitney boxes $I\in\W(\Omega)$,
of uniformly bounded cardinality, all with ${\rm int }(I^*)\subset \Omega_N$, $\ell(I)\approx2^{-N}$,
and $B(x_N,4r)\cap I^*\neq\emptyset$. Notice that we have $B(x_N,4r)\cap\partial\Omega_N\subset \cup_I \partial I^*$.
For each such $I$,
there is a $Q_I\in\dd_{\F_N}$ such that
$$
\ell(I)\approx \ell(Q_I)\approx 2^{-N}\approx\dist(I,Q_I)\,,
$$
so by definition $I\in \W_{Q_I}^*$.
Thus, for any such $I$ we have that
\begin{equation}\label{cover2}
|x_{Q_I}-x|
\lesssim
\ell(Q_I)+d(Q_I,I)+\ell(I)+4r+|x_N-x|
\le
M\,2^{-N}\,,
\end{equation}
for $M$ chosen large enough, and therefore $x_{Q_I}\in \Delta$.

Let $k(\Delta)$ denote the unique $k \in \mathbb{Z}$ such that $2^{-k-1}<  M\,2^{-N}
\leq 2^{-k}$ (below we may need to make $M$ larger). We define
$$
\mathbb{P}:={\rm int}\left(\bigcup_{Q\in \dd^\Delta} \overline{\Omega_{\F_N,Q}}\right).
$$
where $\dd^\Delta$ is defined in \eqref{eq2.box3} (note that $k(\Delta)$ is slightly different.)

Let us make some observations about this set $\mathbb{P}$. Note first that $\dd_{\F_N}$ is the collection of all dyadic cubes whose side length are at least  $2^{-N+1}$. Thus for every $Q\in \dd^\Delta$, we have that $\ell(Q)=2^{-k(\Delta)}$ and $\dd_{\F_N,Q}$ consists of all dyadic subcubes of $Q$ whose side length is at least  $2^{-N+1}$, i.e., those $Q'\in\dd_Q$ with $2^{-N+1}\le \ell(Q)\le 2^{-k(\Delta)}$ and these are a finite number. Note also that  the cardinality of $\dd^\Delta$ is finite and depends only on
$M$ and the ADR constants,
and therefore $\overline{\mathbb{P}}$ is the union of a finite number
(the cardinality depends only on  $M$ and on the parameters that appear in the definition of $\W_Q^*$) of fattened Whitney boxes that are ``close'' to $\Delta$ and have size comparable to $2^{-N}$.
Note also, that $\mathbb{P}\subset \Omega_N$. It is easy to see that  $B(x_N,4r)\cap \Omega_N\subset \mathbb{P}$.
Namely,  for every $Y\in B(x_N,4r)\cap \Omega_N$ we have shown above using \eqref{cover}  that there exists $I^*\ni Y$ with $\ell(I)\approx 2^{-N}$, $I\in \W_{Q_I}^*$ and $x_{Q_I}\in \Delta$. Therefore there is $Q^*\in\dd^\Delta$ such that $Q_I\subset Q^*$. Thus, $Y\in \overline{\Omega_{\F_N,Q_I}}\subset \bigcup_{Q\in \dd^\Delta} \overline{\Omega_{\F_N,Q}}$.
Notice that $B(x_N, 4  r)
\cap \Omega_N$ is open so there exists a ball $B_Y$ centered at $y$ that is contained in this set. For every $Z\in B_Y$ we have just shown that $z\in \bigcup_{Q\in \dd^\Delta} \overline{\Omega_{\F_N,Q}}$, this eventually leads to $Y\in\mathbb{P}$.

\begin{lemma}\label{lemma:Poly-NTA}
$\mathbb{P}$ is an NTA domain  with ADR boundary,   that is, it satisfies the interior {\bf and }exterior corkscrew conditions, the (interior) Harnack chain condition,   and $\partial\mathbb{P}$ is $n$-dimensional ADR;   moreover, all of the corresponding constants  are independent of $N$.
Consequently, $\omega_\mathbb{P}$, the harmonic measure for the domain $\mathbb{P}$, is absolutely continuous with respect to surface measure and its Poisson kernel, $k_\mathbb{P}$, satisfies the scale invariant estimate
\begin{equation}\label{eqn:scale-inva:P}
\int_{\Delta^\mathbb{P}} \left(k_\mathbb{P}^{X_{\Delta^\mathbb{P}}}\right)^{\hat{p}} d\sigma_{\mathbb{P}} \leq C\, \sigma_{\mathbb{P}}(\Delta^{\mathbb{P}})^{1-\hat{p}},
\end{equation}
for some $1<\hat{p}<\infty$ and for all surface balls $\Delta^{\mathbb{P}}$, where $C$ and $\hat{p}$ are independent of $N$.
\end{lemma}

The proof of this result is given below.

Fix $y\in \Delta^N=B(x_N,r)\cap\pom_N$. Then $y\in \partial I^*$ with $I\in\W(\Omega)$,  $\ell(I)\approx 2^{-N}$ and ${\rm int}(I^*)\subset   \Omega_N$. We recall that $\overline{\mathbb{P}}$ is a finite union  of fattened Whitney boxes $J^*\in\W(\Omega)$, and we define $\mathbb{P}_{I^*}$ to be the interior of the union of those boxes $J^*\subset\mathbb{P}$ that overlap $I^*$.
It is easy to see that $I\subset \mathbb{P}_{I^*}\subset \mathbb{P}$.
We shall see from the proof of Lemma \ref{lemma:Poly-NTA} that
$\mathbb{P}_{I^*}$ is an NTA domain
with constants independent of $N$.  Moreover, we claim that taking $M$ above larger we have that $\mathbb{P}_{I^*}$ is {\it strictly} contained in $\mathbb{P}$,
 and in fact $\mathbb{P}\setminus \mathbb{P}_{I^*}$ contains some Whitney box $J\in\W(\Omega)$.  We may verify this claim as follows.
We fix $Q_I\in\F_N$ with $\ell(Q_I)\approx\ell(I)\approx 2^{-N}$ and $\dist(I,Q_I)\approx 2^{-N}$.  Notice that, for $M$ large enough, there is some $Q\in \dd^{\Delta}$ such that
$Q_I\subset Q$
(cf. \eqref{cover}-\eqref{cover2}.)
For this $Q$,
take $J\in \W_{Q}^*$ which is clearly in $\mathbb{P}$. Note that
$$
\ell(J)\approx\ell(Q)\approx M\,2^{-N}\approx M\,\ell(I)
$$
and therefore for $M$ large enough we have that $I^*\cap J^*=\emptyset$,
by properties of Whitney cubes (since
otherwise $\ell(I)\approx \ell(J)$). This shows that $\mathbb{P}_{I^*}\subsetneq \mathbb{P}$.

Set $u(Z)=G_N(Z,X_{\Delta^N})$ and $v(Z)=G_{\mathbb{P}}(Z,X_{\Delta^N})$ where $G_N$, $G_{\mathbb{P}}$ are respectively the Green functions associated with the domains $\Omega_N$ and $\mathbb{P}$. Notice that $u$ and $v$ are non-negative harmonic functions in $\mathbb{P}_{I^*}$ (since $\mathbb{P}_{I^*}\subset \mathbb{P}\subset \Omega_N$) and we can assume that $X_{\Delta^N}\notin \mathbb{P}_{I^*}$:  indeed, by the Harnack chain condition,
and our observation above that  $\mathbb{P}_{I^*}\subsetneq \mathbb{P}$,
we may assume that $X_{\Delta^N}$ is the center of a Whitney box  $J\subset
\mathbb{P}\setminus \mathbb{P}_{I^*}$.
Notice that $u\big|_{2\,\Delta^N}=v\big|_{2\,\Delta^N}=0$
(since $2\,\Delta^N\subset\pom_N$ and since $B(x_N,4\,r)\cap\Omega_N\subset \mathbb{P}$ implies that $2\,\Delta^N=B(x_N,2\,r)\cap\partial \mathbb{P}$). Therefore $u=v\equiv 0$ in $B(y,2^{-N-N_1})\cap\partial\mathbb{P}_{I^*}$ (here $N_1$ is a fixed big integer such that $B(y,2^{-N-N_1})\cap\partial\mathbb{P}_{I^*}\subset 2\Delta^N$. As mentioned before, $\mathbb{P}_{I^*}$ is an NTA
domain with constants independent of $N$ and we have the (full) comparison principle (see \cite[Lemma 4.10]{JK}):
$$
\frac{u(Z)}{v(Z)}\approx \frac{u(X(I))}{v(X(I))},
$$
for every $Z\in B(y, 2^{-N-N_1}/C)\cap \mathbb{P}_{I^*}$ with $C$ large independent of $N$. We remind the reader that $X(I)$ stands for the center of the Whitney box $I$. Notice that
$$|X(I)-X_{\Delta^N}|\approx \dist(X(I),\partial\mathbb{P})\approx
\dist(X_{\Delta^N},\partial\mathbb{P})\approx 2^{-N}\,.$$
Thus, the Harnack chain condition and the size estimates for the Green functions imply that $u(X(I))/v(X(I))\approx 1$. Hence $u(Z)\approx v(Z)$ for all $Z\in B(y, 2^{-N-N_1}/C)\cap \mathbb{P}_{I^*}$.


As observed above $\Omega_N$ is a 1-sided NTA with ADR boundary and also satisfies the qualitative exterior corkscrew condition, so that, in particular, we may apply Lemma \ref{lemma2.cfms}.
We take   $s\ll 2^{-N}$ and write $\Delta^N(y,s)=B(y,s)\cap \Omega_N$ to obtain
$$
\frac{u(X_{\Delta^N(y,s)})}{s}=\frac{G_N(X_{\Delta^N(y,s)},X_{\Delta^N})}{s}\approx \frac{\omega_N^{X_{\Delta^N}}(\Delta^N(y,s))}{\sigma_N(\Delta^N(y,s))}.
$$
The same can be done with $G_{\mathbb{P}}$ (indeed $\mathbb{P}$ is NTA) after observing that  $\Delta^N(y,s)$ is also a surface ball for $\mathbb{P}$ and that after using Harnack if needed $X_{\Delta^N(y,s)}$ is a corkscrew point with respect to $\mathbb{P}$ for that surface ball. Then,
$$
\frac{v(X_{\Delta^N(y,s)})}{s}=\frac{G_\mathbb{P}(X_{\Delta^N(y,s)},X_{\Delta^N})}{s}\approx \frac{\omega_\mathbb{P}^{X_{\Delta^N}}(\Delta^N(y,s))}{\sigma_\mathbb{P}(\Delta^N(y,s))}
$$
Note that if $s$ is small enough then $Z=X_{\Delta^N(y,s)}\in B(y, 2^{-N-N_1}/C)\cap \mathbb{P}_{I^*}$. This allows us to gather the previous estimates and conclude that
$$
\frac{\omega_N^{X_{\Delta^N}}(\Delta^N(y,s))}{\sigma_N(\Delta^N(y,s))}
\approx
\frac{\omega_\mathbb{P}^{X_{\Delta^N}}(\Delta^N(y,s))}{\sigma_\mathbb{P}(\Delta^N(y,s))}.
$$
Next we use the Lebesgue differentiation theorem and obtain that $k_N^{X_{\Delta^N}}(y)\approx k_\mathbb{P}^{X_{\Delta^N}}(y)$ for a.e. $y\in \Delta^N$.  Then if we set $\tilde{p}=\min\{p,\hat{p}\}$ and use \eqref{eqn:scale-inva:P} we obtain
\begin{multline*}
\left(\fint_{\Delta^N} \left(k_N^{X_{\Delta^N}}\right)^{\tilde{p}}\,d\sigma_N\right)^{\frac1{\tilde{p}}}
\lesssim
\left(\fint_{\Delta^N} \left(k_\mathbb{P}^{X_{\Delta^N}}\right)^{\tilde{p}}\,d\sigma_{\mathbb{P}}\right)^{\frac1{\tilde{p}}}
\\
\le
\left(\fint_{\Delta^N} \left(k_\mathbb{P}^{X_{\Delta^N}}\right)^{\hat{p}}\,d\sigma_{\mathbb{P}}\right)^{\frac1{\hat{p}}}
\lesssim
\sigma_{\mathbb{P}}(\Delta^N)^{-1}
\approx
\sigma_{N}(\Delta^N)^{-1}.
\end{multline*}
This completes the proof of Proposition \ref{prop:RHp-scale-N}.
\end{proof}

\begin{proof}[Proof of Lemma \ref{lemma:Poly-NTA}]
The proof is very elementary. The corkscrew conditions are as follows. Fix $\Delta_{\mathbb{P}}=\Delta_{\mathbb{P}}(x,r)$ with $0<r<\diam(\partial \mathbb{P})\approx 2^{-N}$. Since $x\in \partial\mathbb{P}$, which is a finite union of fattened boxes of size comparable to $2^{-N}$, $x\in \partial I^*$ with $\ell(I)\approx 2^{-N}$. The fact that $I^*$ is a cube yields easily that you can always find a corkscrew point in the segment that joints $x$ and $X(I)$ (the center of $I$) with constants independent of $N$ and this is an interior corkscrew point for $\mathbb{P}$. For the exterior corkscrew condition we notice that $\partial I^*$ can be covered by the union of the Whitney boxes that are neighbors of $I$ (i.e., whose boundaries touch). Then $x\in J$ with $\partial I\cap\partial J\neq\emptyset$. Note that $J\subset{\rm int}(J^*)$ and hence $J^*$ cannot be one of the Whitney boxes that define $\mathbb{P}$. Take $J'$ a neighbor of $J$: if ${\rm int}((J')^*)\subset \mathbb{P}$, then $(J')^*$ ``bites'' a small portion of  $J$ (since $(J')^*$ is a small dilation of $J'$); otherwise $(J')^*$ does not ``bite'' $J$. Eventually, we see that the part of
$J$ minus all these ``bites'' is contained in $\mathbb{P}^c$. Note that $x$ is in one of this portions and therefore we can find a corkscrew point in the segment between $x$ and the center of $J$. This is possible since $r\lesssim 2^{-N}$.

We show the Harnack chain condition. Let $X_1, X_2\in\mathbb{P}$. Then, for each $i=1,2$ there exists $Q_i\in\dd^{\Delta}$ and $Q_i'\in\dd_{\F_N,Q_i}$ with $X_i\in {\rm int}(I_i^*)$ where $I_i\in \W^*_{Q_i'}$. Note that $\ell(Q_i)\approx \ell(Q_i')\approx \ell(I_i)\approx 2^{-N}$. If $I_1^*$, $I_2^*$ overlap then the condition is clear. Otherwise, $\dist(I_1^*,I_2^*)\approx\dist(I_1, I_2)\approx 2^{-N}$ and $|X_1-X_2|\approx 2^{-N}$. Moreover, we can construct an appropriate chain in each fattened box between
$X_i$ and $X(I_i)$.  In fact, if $\delta_{\mathbb{P}}(X_i)\approx 2^{-N}$
the number of balls is bounded by
a dimensional constant, whereas
if $\delta_{\mathbb{P}}(X_i)\ll 2^{-N}$ then we simply use that each cube $I_i^*$ has the Harnack chain property with uniform constants. Therefore, it suffices to assume that $X_i$  is the center of $I_i^*$. In such a case $\delta_{\mathbb{P}}(X_i)\approx 2^{-N}$, $i=1,2$, $|X_1-X_2|\approx 2^{-N}$ and therefore we need to find a Harnack chain whose cardinality is uniformly bounded (in $N$).

Therefore we fix two fattened Whitney boxes $I_1^*$, $I_2^*$ and we want to find a Harnack chain between the centers of such boxes. As just mentioned, if $I_1^*$, $I_2^*$ overlap we can easily construct a finite (depending only on $\lambda$) chain between $X(I_1)$ and $X(I_2)$ (with respect to the domain $I_1^*\cup I_2^*$) such that $\diam (B_i)\approx 2^{-N}$, $\dist(B_i, \partial (I_1^*\cup I_2^*))\approx 2^{-N}$. Let us observe that
in such a case
$$
\dist(B_i,\partial\mathbb{P})\le \diam(\mathbb{P})
\approx 2^{-N}
\approx
\dist(B_i, \partial (I_1^*\cup I_2^*))
\le
\dist(B_i,\partial\mathbb{P})
$$
Thus the constructed Harnack chain is valid for the domain $\mathbb{P}$. If  $I_1^*$, $I_2^*$ do not overlap it suffices to find a chain of fattened boxes $J_1^*,\dots,J_K^*$ in $\mathbb{P}$ such that $J_1^*=I_1^*$, $J_K^*=I_2^*$ and with the property that $J_k^*$ and $J_{k+1}^*$ overlap. Notice that
$K$ is uniformly bounded  since $\mathbb{P}$ is a finite union of fattened Whitney boxes with
cardinality bounded uniformly in $N$.
Notice that if $Q_1$, $Q_2\in \dd^{\Delta}$
we have $\ell(Q_1)=\ell(Q_2)$ and $\dist(Q_1,Q_2)\le 4\,\ell(Q_2)$.
Therefore, the construction of the sets $\W_{Q}$ guarantees that
$\W_{Q_1}\cap \W_{Q_2}$ is non-empty (cf. \eqref{eq2.whitney2**})
and there exists $I\in \W_{Q_1}\cap \W_{Q_2}$.
This leads to show that $I\subset {\rm int}(U_{Q_1})\cap {\rm int}(U_{Q_2})$ which in turn gives $I\subset \Omega_{\F_N,Q_1}\cap \Omega_{\F_N,Q_2}$. To conclude we just observe that each $\Omega_{\F_N,Q_i}$ enjoys the Harnack chain property  by Lemma \ref{lemma2.30}  and this means that we can connect any box in $\Omega_{\F_N,Q_i}$ with $I$ as before.

To show that $\partial\mathbb{P}$ is ADR we proceed as follows. Notice that $\partial\mathbb{P}\subset\cup_{Q\in\dd^{\Delta}} \partial \Omega_{\F_N,Q}$ where each $\partial \Omega_{\F_N,Q}$ is ADR by Lemma \ref{lemma2.30} and the cardinality of $\dd^\Delta$ is finite and depends only on $M$ and the ADR constants of $\Omega$. Thus the upper bound of the ADR condition for $\partial\mathbb{P}$ follows at once with constants that are independent of $N$. For the lower bound, let $B=B(x,r)$ with $x\in \partial\mathbb{P}$ and $r\le\diam \mathbb{P}\approx 2^{-N}$. As $\partial\mathbb{P}$ is comprised of partial faces of fattened Whitney cubes $I^*$ with $\ell(I)\approx 2^{-N}$, then $x$ lies in a subset $F$ of a (closed)
face of $I^*$,  with $F\subset B\cap\partial\mathbb{P}$, and $H^n(F\cap B)=H^n(F)\gtrsim r^n$,
as desired. Again the constants are independent of $N$.

Having shown that $\mathbb{P}$ is an NTA domain with ADR boundary,   with constants independent of $N$, we may invoke \cite{DJe} to deduce absolute continuity of harmonic measure,
along with the desired scale invariant estimate \eqref{eqn:scale-inva:P} for the Poisson kernel, with constants independent of $N$.
\end{proof}

With these preliminaries in hand, we now turn to the proof of
\eqref{eqn:scale-inva:converse-approx}.  To this end, we fix $\Delta^N$, and
consider three cases. If $r(\Delta^N)\approx 2^{-N}$ then
\eqref{RHp-scale-N-cks}
gives the desired estimate.  Assume now that  $r(\Delta^N)\gg 2^{-N}$. We cover $\Delta^N$ by those dyadic cubes $Q^N\in \mathbb{D}_N(\pom_N)$ that meet $\Delta^N$ (we remind the reader that $\mathbb{D}_N(\pom_N)$ are the dyadic cubes associated to the ADR set $\pom_N$ with $\ell(Q_N)=2^{-N}$). Note that for each of those cubes we have $X_{\Delta^N}\in\Omega_N\setminus 2\kappa_0 B(x_{Q^N}, C\,2^{-N})$ and then we can use \eqref{RHp-scale-N} with $Y=X_{\Delta^N}$ and the surface ball $\Delta^N(x_{Q^N}, C\,2^{-N})$:
\begin{align*}
\int_{\Delta^N} \left(k_N^{X_{\Delta^N}}\right)^{\tilde{p}}\,d\sigma_N
&\le
\sum_{Q^N} \sigma_N(Q^N)\,\fint_{Q^N} \left(k_N^{X_{\Delta^N}}\right)^{\tilde{p}}\,d\sigma_N
\\
&\lesssim
\sum_{Q^N} \sigma_N(Q^N)\,\fint_{\Delta^N(x_{Q^N}, C\,2^{-N})} \left(k_N^{X_{\Delta^N}}\right)^{\tilde{p}}\,d\sigma_N
\\
&\lesssim
\sum_{Q^N} \sigma_N(Q^N)\,\left(\frac{\omega_N^{X_{\Delta^N}}\big(\Delta^N(x_{Q^N}, C\,2^{-N})\big)}{\sigma_N\big(\Delta^N(x_{Q^N}, C\,2^{-N})\big  )}
\right)^{\tilde{p}}
\\
&\lesssim
\sum_{Q^N} \sigma_N(Q^N)\,\left(\frac{\omega_N^{X_{\Delta^N}}(\Delta^N_{X_{Q^N}} )}{\sigma_N(\Delta^N_{X_{Q^N}})}\right)^{\tilde{p}},
\end{align*}
where in the last estimate we have used that $\omega_N$ is doubling: let us observe that $\Delta^N(x_{Q^N}, C\,2^{-N})$ and $\Delta^N_{X_{Q^N}}$ are balls whose radii are comparable to $2^{-N}$ and the distance of their centers is controlled by $2^{-N}$, and that $\delta_N(X_{\Delta^N})\approx r(\Delta^N)\gg 2^{-N}$.  Let us observe that $X_{Q^N}\in \Omega_N$ and
$\delta(X_{Q^N})\approx \delta_N(X_{Q^N})\approx 2^{-N}\ll\delta_N(X_{\Delta^N})\approx \delta(X_{\Delta^N})$. Then we use \eqref{CFMS-approx} to obtain
\begin{equation}\label{RHp-discretize-N}
\int_{\Delta^N} \left(k_N^{X_{\Delta^N}}\right)^{\tilde{p}}\,d\sigma_N
\lesssim
\sum_{Q^N} \sigma_N(Q^N)\,\left(\frac{\omega^{X_{\Delta^N}}(\Delta_{X_{Q^N}} )}{\sigma(\Delta_{X_{Q^N}})}\right)^{\tilde{p}}
\end{equation}

Given $Q^N$, its center $x_{Q^N}$ is in $\pom_N\subset\Omega$ and therefore there exist $\tilde{x}_{Q^N}\in\pom$ such that $\delta(x_{Q^N})=|x_{Q^N}-\tilde{x}_{Q^N}|\approx 2^{-N}$. Then using the dyadic cube structure in $\pom$, there exists a unique $R(Q^N)\in \mathbb{D}_N(\pom)$ that contains $\tilde{x}_{Q^N}$. We note that $\Delta_{X_{Q^N}} $ and $\Delta_{R(Q^N)}$ are surface balls in $\pom$ with radii comparable to $2^{-N}$ and whose centers are separated at most $C\,2^{-N}$. This means that $\Delta_{X_{Q^N}} \subset C\,\Delta_{R(Q^N)}$. On the other hand, one can show that $C\,\Delta_{R(Q^N)}\subset C'\,\Delta_{X_{\Delta^N}}$ by using that $Q^N$ meets $\Delta^N$ and that $r(\Delta^N)\gg 2^{-N}$.

Next we want to control the overlapping of the family $\{C\,\Delta_{R(Q^N)}\}_{Q^N}$. It is easy to see that if $C\,\Delta_{R(Q^N) }\cap C\,\Delta_{R(Q_1^N)}\neq\emptyset$
then $|x_{Q^N}-x_{Q_1^N}|\lesssim 2^{-N}=\ell(Q_1^{N})=\ell(Q^{N})$. Thus the fact that the cubes $Q^N$ are all dyadic disjoint with side length $2^{-N}$ yields
$$
\#\{Q^N: C\,\Delta_{R(Q^N)}\cap C\,\Delta_{R(Q_1^N)}\neq\emptyset \}
\le
\#\{Q^N: |x_{Q^{N}}-x_{Q_1^{N}}|\lesssim 2^{-N}\}\le C.
$$

Gathering the previous facts we obtain that
$$
\sum_{Q^N} \chi_{\Delta_{X_{Q^N}}}
\le \sum_{Q^N } \chi_{C\,\Delta_{R(Q^N)}}
\le C\,\chi_{C'\,\Delta_{X_{\Delta^N}}}.
$$
Set $\Delta_0=C'\,\Delta_{X_{\Delta^N}}$ and observe that by the Harnack Chain property $\omega^{X_{\Delta^N}}\approx \omega^{X_{\Delta_0}}$. Thus
plugging these estimates into \eqref{RHp-discretize-N} we conclude that
\begin{multline*}
\int_{\Delta^N} \left(k_N^{X_{\Delta^N}}\right)^{\tilde{p}}\,d\sigma_N
\lesssim
\sum_{Q^N} \sigma_N(Q^N)\,\left(\frac{\omega^{X_{\Delta_0}}(\Delta_{X_{Q^N}} )}{\sigma(\Delta_{X_{Q^N}})}\right)^{\tilde{p}}
\lesssim
\sum_{Q^N} \int_{\Delta_{X_{Q^N}}} \left(k^{X_{\Delta_0}}\right)^{\tilde{p}}\,d\sigma
\\
\lesssim
\int_{\Delta_0}\left(k^{X_{\Delta_0}}\right)^{\tilde{p}}\,d\sigma
\le
\left(\int_{\Delta_0}\left(k^{X_{\Delta_0}}\right)^{p}\,d\sigma\right)^{\frac{\tilde{p}}{p}}\,\sigma(\Delta_0)^{\frac{p-\tilde{p}}{p}}
\lesssim
\sigma(\Delta_0)^{1-\tilde{p}}
\approx
\sigma_N(\Delta^N)^{1-\tilde{p}},
\end{multline*}
where we have used $\tilde{p}\le p$ and our hypothesis \eqref{eqn:scale-inva:converse} and where the involved constants are independent of $N$. This completes the case $r(\Delta^N)\gg 2^{-N}$.

To complete Step 1, we need to
establish \eqref{eqn:scale-inva:converse-approx}
for $r:=r(\Delta^N)\ll 2^{-N}$.  Set $\Delta^N=B(x_N,r)\cap\pom_N$.
Given $\Delta^N(x_N,C\,2^{-N})$ we consider the
associated $\mathbb{P}$ as before. For a fixed $y\in \Delta^N=B(x_N,r)\cap\pom_N$,
we have that $y\in \partial I^*$ with $\ell(I)\approx 2^{-N}$ and ${\rm int}(I^*)\subset \Omega_N$.   We define $\mathbb{P}_{I^*}$ as before
and recall that it is an NTA domain with constants independent of $N$.
 Let us note that
$5\Delta^N\subset \pom_N\cap \partial\mathbb{P}_{I^*}$,
since $r\ll2^{-N}$.
We let $Y_{\Delta^N}$ denote a Corkscrew point
with respect to $\Delta^N$, for the domain $\mathbb{P}_{I^*}$.
Let $c<1$ be small enough such that
$Y_{\Delta^N}\notin T_{\Delta^N(y,2\,c\,r)}^{\mathbb{P}_{I^*}}$,
where $T_{\Delta^N(y,2\,c\,r)}^{\mathbb{P}_{I^*}}$ is the Carleson box
with respect to $\Delta^N(y,2\,c\,r)$ relative to the domain
$\mathbb{P}_{I^*}$ (note that $c$ depends on the various geometric constants of $\mathbb{P}_{I^*}$ which are all independent of $N$, see \eqref{eq2.contain}). Notice that by Lemma \ref{lemma2.30}, $T_{\Delta^N(y,2\,c\,r)}^{\mathbb{P}_{I^*}}$ inherits the (interior) Corkscrew and Harnack Chain conditions
and also the ADR property
from  $\mathbb{P}_{I^*}$.
 Moreover, $T_{\Delta^N(y,2\,c\,r)}^{\mathbb{P}_{I^*}}$ also inherits
the exterior corkscrew condition, with uniform bounds.  Indeed, consider
a surface ball $\Delta_\star(z,\rho)\subset \partial T_{\Delta^N(y,2\,c\,r)}^{\mathbb{P}_{I^*}},$ with
$z\in \partial T_{\Delta^N(y,2\,c\,r)}^{\mathbb{P}_{I^*}}$, and $\rho\lesssim r
\approx \diam(T_{\Delta^N(y,2\,c\,r)}^{\mathbb{P}_{I^*}})$.   If $\dist(z,\partial\mathbb{P}_{I^*})
\leq \rho/100$, then $B(z,\rho)\cap\big(\ree\setminus T_{\Delta^N(y,2\,c\,r)}^{\mathbb{P}_{I^*}}\big)$ contains an exterior Corkscrew point inherited from $\mathbb{P}_{I^*}$.  On the other hand, if
$\dist(z,\partial\mathbb{P}_{I^*})
> \rho/100$, then $z$ lies on a face of some fattened
Whitney cube $J^*\in\W(\mathbb{P}_{I^*})$, of side length $\ell(J^*)\gtrsim \rho$, for which there is
some adjoining Whitney cube $J'$ containing an exterior Corkscrew point in
$B(z,\rho)\cap \big(\mathbb{P}_{I^*}\setminus T_{\Delta^N(y,2\,c\,r)}^{\mathbb{P}_{I^*}}\big)$.
Consequently $T_{\Delta^N(y,2\,c\,r)}^{\mathbb{P}_{I^*}}$ is an NTA domain with constants independent of $N$, $r$ and $y$. 

Set $u(Z)=G_N(Z,Y_{\Delta^N})$ and $v(Z)=G_{\mathbb{P}_{I^*}}(Z,Y_{\Delta^N})$ where $G_N$, $G_{\mathbb{P}_{I^*}}$ are respectively the Green functions associated with the domains $\Omega_N$ and $\mathbb{P}_{I^*}$. Then $u$ and $v$ are non-negative harmonic functions in $T_{\Delta^N(y,2\,c\,r)}^{\mathbb{P}_{I^*}}$ (since $Y_{\Delta^N}\notin T_{\Delta^N(y,2\,c\,r)}^{\mathbb{P}_{I^*}}$).
Notice that
for $c$ small, $r\ll2^{-N}$, and $y \in \Delta^N=\Delta^N(x_N,r)$ we have that
$$\Delta^N_1:= \Delta^N(y,cr):= \big(B(y,c\,r)\cap\pom_N\big)
\subset\left(\pom_N\cap\partial\mathbb{P}_{I^*}\cap
\partial T_{\Delta^N(y,2\,c\,r)}^{\mathbb{P}_{I^*}}\right)\,.$$
In particular, $u\big|_{\Delta^N_1}=v\big|_{\Delta^N_1}=0$.
As noted above, $T_{\Delta^N(y,2\,c\,r)}^{\mathbb{P}_{I^*}}$ is an NTA domain with constants independent of $N$ and thus by \cite[Lemma 4.10]{JK})
we have the comparison principle:
$$
\frac{u(Z)}{v(Z)}\approx \frac{u(X_0)}{v(X_0)}\,,\qquad \forall
Z\in B(y, cr/C)\cap T_{\Delta^N(y,2\,c\,r)}^{\mathbb{P}_{I^*}}\,,
$$
with $C$ sufficiently large but independent of $N$, and where $X_0\in
T_{\Delta^N(y,2\,c\,r)}^{\mathbb{P}_{I^*}}$ is a corkscrew point associated to the surface ball $\Delta_1^N$, relative to each of the domains $\Omega_N$, $\mathbb{P}_{I^*}$ and
$T_{\Delta^N(y,2\,c\,r)}^{\mathbb{P}_{I^*}}$.   By the Harnack chain condition in $\mathbb{P}_{I^*}$
and since
$$|X_0-Y_{\Delta^N}|\approx \dist(X_0,\partial\mathbb{P}_{I^*})
\approx \dist(Y_{\Delta^N},\partial\mathbb{P}_{I^*}) \approx r \,, $$
one can show that $u(X_0)/v(X_0)\approx 1$ which eventually  leads to $u(Z)\approx v(Z)$ for all $Z\in B(y, cr/C)\cap T_{\Delta^N(y,2\,c\,r)}^{\mathbb{P}_{I^*}}$.

We can now apply Lemma \ref{lemma2.cfms} (since, as observed above, $\Omega_N$ satisfies the required ``qualitative assumption''),
so that if we take  $s\ll r$ and write $\Delta^N(y,s)=B(y,s)\cap \Omega_N$, we have
$$
\frac{u(X_{\Delta^N(y,s)})}{s}=\frac{G_N(X_{\Delta^N(y,s)},X_{\Delta^N})}{s}\approx \frac{\omega_N^{X_{\Delta^N}}(\Delta^N(y,s))}{\sigma_N(\Delta^N(y,s))}.
$$
The same can be done with $G_{\mathbb{P}_{I^*}}$ (indeed $\mathbb{P}_{I^*}$ is NTA) after observing that  $\Delta^N(y,s)$ is also a surface ball for $\mathbb{P}_{I^*}$ and that, after using Harnack if needed, $X_{\Delta^N(y,s)}$ may be taken to be a corkscrew point
with respect to $\mathbb{P}_{I^*}$ for that surface ball. Then,
$$
\frac{v(X_{\Delta^N(y,s)})}{s}=\frac{G_{\mathbb{P}_{I^*}}(X_{\Delta^N(y,s)},X_{\Delta^N})}{s}\approx \frac{\omega_{\mathbb{P}_{I^*}}^{X_{\Delta^N}}(\Delta^N(y,s))}{\sigma_{\mathbb{P}_{I^*}}(\Delta^N(y,s))}
$$
Note that if $s$ is small enough then $Z=X_{\Delta^N(y,s)}\in B(y, cr/C)\cap T_{\Delta^N(y,2\,c\,r)}^{\mathbb{P}_{I^*}}$. This allows us to gather the previous estimates and conclude that
$$
\frac{\omega_N^{X_{\Delta^N}}(\Delta^N(y,s))}{\sigma_N(\Delta^N(y,s))}
\approx
\frac{\omega_{\mathbb{P}_{I^*}}^{X_{\Delta^N}}(\Delta^N(y,s))}{\sigma_{\mathbb{P}_{I^*}}(\Delta^N(y,s))}.
$$
Next we use the Lebesgue differentiation theorem and obtain that $k_N^{X_{\Delta^N}}(y)\approx k_{\mathbb{P}_{I^*}}^{X_{\Delta^N}}(y)$ for a.e. $y\in \Delta^N$. Then
\begin{multline*}
\left(\fint_{\Delta^N} \left(k_N^{X_{\Delta^N}}\right)^{\tilde{p}}\,d\sigma_N\right)^{\frac1{\tilde{p}}}
\lesssim
\left(\fint_{\Delta^N} \left(k_{\mathbb{P}_{I^*}}^{X_{\Delta^N}}\right)^{\tilde{p}}\,d\sigma_{\mathbb{P}_{I^*}}\right)^{\frac1{\tilde{p}}}
\\
\le
\left(\fint_{\Delta^N} \left(k_{\mathbb{P}_{I^*}}^{X_{\Delta^N}}\right)^{\hat{p}}\,d\sigma_{\mathbb{P}_{I^*}}\right)^{\frac1{\hat{p}}}
\lesssim
\sigma_{\mathbb{P}_{I^*}}(\Delta^N)^{-1}
\approx
\sigma_{N}(\Delta^N)^{-1}
\end{multline*}
where we have used that $\tilde{p}\le \hat{p}$ and \eqref{eqn:scale-inva:P}. This completes the proof of \eqref{eqn:scale-inva:converse-approx}

 \noindent{\it Remark.}  We notice that to obtain \eqref{eqn:scale-inva:P}, in place of using \cite{DJe}, we could have invoked \cite{VV}: $\mathbb{P}$ is a polyhedral domain
 (its boundary consists of a finite number of flat ``faces''), is  NTA and has the ADR property, thus $k_{\mathbb{P}}$ is a $RH_2$ weight. Thus $\hat{p}=2$ and  $\tilde{p}=\min\{p,2\}$.

\subsection{Step 2: Local $Tb$ theorem for square functions}\label{section-step2-Tb}

Having already established Step 1, we want to show that $\Omega_N$ has the UR property with uniform bounds. In the last step we shall  show that this ultimately implies that $\Omega$ inherits this property.
Therefore, in all the remaining steps, but the last one, we drop the already fixed subindex/superindex $N$ everywhere and write $\Omega$ to denote the corresponding approximating domain $\Omega_N$. Our main assumption is that \eqref{eqn:scale-inva:converse-approx} holds, and this rewrites as
\begin{equation}\label{eqn:scale-inva:converse-approx:wo-N}
\int_{\Delta} \left(k^{X_{\Delta}}\right)^{\tilde{p}} d\sigma \leq C\, \sigma(\Delta)^{1-{\tilde{p}}},
\end{equation}
where $\tilde{p}$ and $C$ are independent of $N$.

We introduce some notation. We have already defined $\W=\W(\Omega)$ the collection of Whitney boxes of $\Omega$. We also define $\W^{ext}=\W(\Omega_{ext})$ the collection of Whitney boxes of $\Omega_{ext}=\re^{n+1}\setminus\overline{\Omega}$. Set $\W_0=\W\cup\W^{ext}$ ---notice that we can get directly this collection by taking the Whitney decomposition of $\re^{n+1}\setminus\pom$. We can then consider as before $\W_Q$ with the same fixed $C_0$
as before (this guarantees among other things that $\W_Q\neq\emptyset$). With that fixed
$C_0$  we define analogously $\W_Q^{ext}$ (using only $I$'s in $\W^{ext}$). Notice that we have not assumed exterior corkscrew points for $\Omega_{ext}$ and therefore $\W_Q^{ext}$ might be the null set. We then define
$$
V_Q
=
\bigcup_{I\in \W_Q} I,
\qquad
\Lambda(x)
=
\bigcup_{x\in Q\in\dd} V_Q
\qquad
\Lambda_{Q_0}(x)
=
\bigcup_{x\in Q\in\dd_{Q_0}} V_Q
$$
and analogously
$$
V_Q^{ext}
=
\bigcup_{I\in \W_Q^{ext}} I,
\qquad
\Lambda^{ext}(x)
=
\bigcup_{x\in Q\in\dd} V_Q^{ext}
\qquad
\Lambda^{ext}_{Q_0}(x)
=
\bigcup_{x\in Q\in\dd_{Q_0}} V_Q^{ext},
$$
where $x\in\pom$ and $Q_0\in\dd$.
We also consider the ``two-sided''
cones
$\tilde{\Lambda}(x)=\Lambda(x)\cup \Lambda^{ext}(x)$ and $\tilde{\Lambda}_{Q_0}(x)=\Lambda_{Q_0}(x)\cup \Lambda_{Q_0}^{ext}(x)$.

In this step we are going to apply the following
local $Tb$ theorem for square functions in \cite{GM}:

\begin{theorem}[Local $Tb$ theorem for square functions, \cite{GM}]\label{theor:Tb}
Let $\Omega\subset \re^{n+1}$, $n\ge 2$, be a connected open set whose boundary $\partial\Omega$ is ADR.
We assume that there is an exponent $q\in (1,2]$, and a finite constant $\A_0>1$ such that for every $Q\in\dd$ there exists a function $b_Q$ satisfying
\begin{equation}\label{Tb-bQ-Lp}
\int_{\pom} |b_Q|^q\,d\sigma
\le
\A_0\,\sigma(Q)
\end{equation}
\begin{equation}\label{Tb-bQ-below}
\left|\int_Q b_Q\,d\sigma\right|\ge
\frac1{\A_0}\,\sigma(Q)
\end{equation}
\begin{equation}\label{Tb-S-bQ}
\int_{Q}\left(\iint_{\tilde{\Lambda}_{Q}(x)} |\nabla^2 \mathcal{S}b_Q(Y)|^2 \,\frac{dY}{\delta(Y)^{n-1}}
\right)^{\frac{q}2}\,d\sigma(x)
\le
\A_0\,\sigma(Q)
\end{equation}
Then,
\begin{equation}\label{Tb-conclusion}
\iint_{\re^{n+1}} |\nabla^2 \mathcal{S}f(Y)|^2 \,\delta(Y)\,dY
\le
C\,\|f\|_{L^2(\pom)}^2
\end{equation}
\end{theorem}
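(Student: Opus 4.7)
\emph{Strategy.} The plan is to reduce \eqref{Tb-conclusion} to a local Carleson-measure estimate for the measure $d\mu(Y):=|\nabla^2\mathcal{S}1(Y)|^2\,\delta(Y)\,dY$, namely $\mu(T_Q)\lesssim \sigma(Q)$ for every $Q\in\dd$, where $T_Q$ is the Carleson box. Once such a Carleson bound for $\nabla^2\mathcal{S}1$ is in hand, a standard $T1$-type argument (in the spirit of \cite{DS2} and using off-diagonal decay for the kernel $\nabla^2\mathcal{E}$, which is a standard Calder\'on--Zygmund kernel) upgrades it to the global $L^2$ bound \eqref{Tb-conclusion} via Carleson-embedding and martingale approximation.

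\emph{Main step.} To establish the Carleson bound, fix $Q\in\dd$, set $b:=b_Q$, and run a corona/stopping-time decomposition. Let $\mathcal{F}_Q$ be the maximal family of subcubes $Q'\subsetneq Q$ for which either $\bigl|\fint_{Q'} b\,d\sigma\bigr|$ drops below $\varepsilon_1/\A_0$, or $\fint_{Q'}|b|^q\,d\sigma$ exceeds $K\A_0$. A weak-type / John--Nirenberg argument based on \eqref{Tb-bQ-Lp}--\eqref{Tb-bQ-below} yields the packing estimate
\[
\sum_{Q'\in\mathcal{F}_Q}\sigma(Q')\le (1-\eta)\,\sigma(Q),
\]
for some $\eta>0$ depending only on $\A_0$, $q$, and the ADR constants. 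On the sawtooth region $\Omega_{\mathcal{F}_Q,Q}$ the averages of $b$ remain comparable to $\fint_Q b\,d\sigma$ at every scale, so one may substitute $1$ by $b/\fint_Q b\,d\sigma$ pointwise, up to a dyadic martingale error that is itself a Carleson-controlled object. The contribution of the main term to $\mu\bigl(\Omega_{\mathcal{F}_Q,Q}\bigr)$ is then bounded by the cone integral in \eqref{Tb-S-bQ}: the point is that as $x$ varies over $Q$, the cones $\tilde\Lambda_Q(x)$ sweep out the Carleson box $T_Q$ with bounded overlap (a Fubini/geometric argument based on the structure of $\W_Q$ and ADR), so
\[
\iint_{T_Q}|\nabla^2\mathcal{S}b(Y)|^2\,\delta(Y)\,dY\;\lesssim\;\int_Q\!\!\iint_{\tilde\Lambda_Q(x)}|\nabla^2\mathcal{S}b(Y)|^2\,\frac{dY}{\delta(Y)^{n-1}}\,d\sigma(x)\;\lesssim\;\A_0\,\sigma(Q).
\]
On the complement $\bigsqcup_{Q'\in\mathcal{F}_Q}T_{Q'}$, each stopping cube $Q'$ has its own test function $b_{Q'}$, so one iterates the same decomposition at scale $Q'$. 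Summing a geometric series in the packing parameter $\eta$ over successive generations of stopping cubes recovers $\mu(T_Q)\lesssim \sigma(Q)$.

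\emph{Main obstacle.} The principal difficulty is handling the \emph{two-sided} cone $\tilde\Lambda_Q=\Lambda_Q\cup \Lambda_Q^{ext}$ in \eqref{Tb-S-bQ}. Because no exterior Corkscrew condition is imposed on $\Omega$, the exterior cones $\Lambda_Q^{ext}(x)$ may degenerate for some $x$, and one must verify that the interior portion together with whatever exterior piece survives still covers $T_Q$ (and its exterior analogue) with the right multiplicity. A second delicate point is the bookkeeping in the corona step: replacing $1$ by $b/\fint_Q b\,d\sigma$ generates commutators of $\nabla^2\mathcal{S}$ with dyadic averaging operators, and controlling them pointwise by a dyadic square function requires the kernel regularity of $\nabla^2\mathcal{E}$ together with the full strength of ADR and the Christ cube structure. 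These are exactly the ingredients quantified in \eqref{Tb-bQ-Lp}--\eqref{Tb-S-bQ}, so the proof amounts to organizing them through the stopping-time machinery.
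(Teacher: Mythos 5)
The paper does not prove this theorem: it is imported as a black box from \cite{GM}, so there is no internal proof to compare your sketch against. Your outline does follow the standard template for a local $Tb$ argument for square functions (reduce to a Carleson bound for $|\nabla^2\mathcal{S}1|^2\delta\,dY$, run a stopping-time decomposition driven by the testing functions $b_Q$, replace $1$ by a normalized $b_Q$ up to a dyadic martingale error, and iterate over stopping generations), and the packing estimate and the $T1$-to-$L^2$ reduction are stated correctly.

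However, there is a genuine gap in your main step. You pass directly from hypothesis \eqref{Tb-S-bQ} to the bound
$$
\iint_{T_Q}|\nabla^2\mathcal{S}b_Q(Y)|^2\,\delta(Y)\,dY \;\lesssim\; \int_Q\iint_{\tilde\Lambda_Q(x)}|\nabla^2\mathcal{S}b_Q(Y)|^2\,\frac{dY}{\delta(Y)^{n-1}}\,d\sigma(x)\;\lesssim\;\A_0\,\sigma(Q),
$$
but the second inequality only follows from \eqref{Tb-S-bQ} when $q=2$: the hypothesis controls the $L^q$ norm (in $x$) of the conical square function of $b_Q$, with $q\in(1,2]$, and for $q<2$ this is strictly weaker than the $L^2$ bound your Fubini step requires. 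The case $q<2$ is not cosmetic here — it is the whole point of the $L^q$ testing condition, and it is essential to the application in this paper, where $q=\tilde p$ may be anywhere in $(1,2]$. Upgrading an $L^q$ testing estimate to the $L^2$ Carleson bound requires an additional self-improvement mechanism (a good-$\lambda$ inequality, Sawyer-type duality, or an extrapolation-from-Carleson argument); your sketch does not supply one. A secondary concern: the claim that the martingale error in the replacement $1\mapsto b_Q/\fint_Q b_Q$ ``is itself a Carleson-controlled object'' is the technical heart of any $Tb$ theorem and cannot simply be asserted — one must exhibit the paraproduct-type decomposition and verify quasi-orthogonality, which is precisely where the kernel regularity of $\nabla^2\mathcal{E}$ and the ADR/Christ-cube structure actually enter. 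By contrast, the concern you single out as the ``main obstacle'' (degeneracy of the exterior cones $\Lambda_Q^{\mathrm{ext}}$) is not a real one: the cones are built from the Whitney decomposition of $\ree\setminus\pom$, which covers both sides of $\pom$ automatically, and if $\W_Q^{\mathrm{ext}}$ is empty the corresponding contribution to the integral simply vanishes.
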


We observe that we can easily show
$$
\int_{\re^{n+1}} |\nabla^2 \mathcal{S}f(Y)|^2 \,\delta(Y)\,dY
\approx
\int_{\pom}\int_{\tilde{\Lambda}(x)} |\nabla^2 \mathcal{S}f(Y)|^2\,\frac{dY}{\delta(Y)^{n-1}}\,d\sigma(x)
$$
which is nothing but the comparability of the ``vertical'' and ``conical'' square functions. In this way, we see \eqref{Tb-S-bQ} as an $L^q$-testing condition  for the local (conical) square function and the conclusion states the $L^2$ boundedness of the (conical or vertical) square function.

In order to apply this result and define the functions $b_Q$ we shall require some geometric preliminaries. Given $Q\in\dd$, we recall that there exists a surface ball $\Delta_Q=\Delta(x_Q,r_Q)$ such that $\Delta_Q\subset Q\subset C\,\Delta_Q=B(x_Q,C\,r_Q)\cap\pom$ with $r_Q\approx\ell(Q)$.
Let $\kappa_1$ be large enough such that $C\,r_Q<\kappa_1\ell(Q)$ and also with $\kappa_1>\kappa_0$ where $\kappa_0$ is given in \eqref{condition:kappa0} below. Set
$\tilde{B}_Q=B(x_Q,\kappa_1\,\ell(Q))$.
We notice that if $x\in Q$ and $Y\in \Lambda_Q(x)$ then $Y\in V_{Q'}$
with $x\in Q'\in\dd_{Q}$ and $Y\in I\in \W_{Q'}$. Thus,
$$
|Y-x_{Q}|
\le
\ell(I)+d(I,Q')+\ell(Q')+|x-x_Q|
\lesssim
C_0 \ell(Q')+\ell(Q)\lesssim
C_0 \ell(Q).
$$
The same can be done for $Y\in \Lambda^{ext}_Q(x)$ and therefore by taking $\kappa_1$
sufficiently large  (depending on $K_0)$ we have
\begin{equation}\label{local-cone-ball}
\bigcup_{x\in Q} \tilde{\Lambda}(x)
\subset
\tilde{B}_Q.
\end{equation}
Next we set $\hat{B}_Q=\kappa_2\,\tilde{B}_Q$, with $\kappa_2$ large enough so that $\hat{X}_Q$, the corkscrew point relative to $\hat{\Delta}_Q=\hat{B}_Q\cap\Omega$, satisfies $\hat{X}_Q\notin 6\,\tilde{B}_Q$ (it suffices to take $\kappa_2=6/c$ with $c$ the constant that appears in the corkscrew condition).

Bearing in mind the previous considerations, we are ready to define our functions $b_Q$
as follows: we set $b_Q=\sigma(Q)\,\eta_Q\,k^{\hat{X}_Q}$ where $\eta_Q$ is a smooth cut-off (defined in $\re^{n+1}$) with $0\le \eta_Q\le 1$,
$\supp \eta_Q\subset 5\,\hat{B}_Q$, $\eta_Q\equiv 1$ in $4\,\hat{B}_Q$,
and $\|\nabla \eta_Q\|_\infty\lesssim \ell(Q)^{-1}$. We also take $q=\tilde{p}$ and we recall that $1<\tilde{p}\le p\le 2$.

Using Harnack's inequality, that $q=\tilde{p}$ and \eqref{eqn:scale-inva:converse-approx:wo-N} we obtain \eqref{Tb-bQ-Lp}:
\begin{multline*}
\int_{\pom} |b_Q|^q\,d\sigma
\le
\sigma(Q)^q\,\int_{5\,\hat{\Delta}_Q} \left(k^{\hat{X}_Q}\right)^q\,d\sigma
\lesssim
\sigma(Q)^q\,\int_{5\,\hat{\Delta}_Q} \left(k^{X_{5\hat{\Delta}_Q}}\right)^q\,d\sigma
\\
\lesssim
\tilde{C}\,\sigma(Q)^q\,\sigma(5\,\hat{\Delta}_Q)^{1-q}
\lesssim
\sigma(Q).
\end{multline*}
Regarding, \eqref{Tb-bQ-below} we have by Lemma \ref{Bourgainhm} and the Harnack chain condition that
$$
\left|\int_Q b_Q\,d\sigma\right|
=
\sigma(Q)\,\int_Q \eta_Q\,k^{\hat{X}_Q}\,d\sigma
\gtrsim
\sigma(Q)\,\omega^{X_Q}(\Delta_Q)
\ge
\sigma(Q)/C.
$$

Next we show \eqref{Tb-S-bQ}. Let $X\in \hat{B}_Q\cap \Omega_{ext}$. Then
\begin{align*}
\nabla_X^2\mathcal{S}b_Q(X)
&=
\int_{\pom}\nabla_X^2\mathcal{E}(X-y)\,b_Q(y)\,d\sigma(y)
\\
&
=
\sigma(Q)\,\int_{\pom}\nabla_X^2\mathcal{E}(X-y)\,\eta_Q(y)\,d\omega^{\hat{X}_Q}(y)
\\
&
=
\sigma(Q)\,\int_{\pom}\nabla_X^2\mathcal{E}(X-y)\,(\eta_Q(y)-1)\,d\omega^{\hat{X}_Q}(y)
\\
&\qquad\qquad
+
\sigma(Q)\,\int_{\pom}\nabla_X^2\mathcal{E}(X-y)\,d\omega^{\hat{X}_Q}(y)
\\
&
=
I_1(X)+I_2(X).
\end{align*}
For $I_2$ we observe that $u(Z)=\nabla^2_X \mathcal{E}(X-Z)$ is harmonic in $\Omega$ and $C^2(\bar{\Omega})$  since $X\in \Omega_{ext}$. Thus, for every $Z\in\Omega$ we have
\begin{equation}\label{eq2.17}
\nabla^2_X \mathcal{E}(X-Z)
=
u(Z)
=
\int_{\pom} u(y)\,d\omega^{Z}(y)
=
\int_{\pom}\nabla_X^2\mathcal{E}(X-y)\,d\omega^{Z}(y).
\end{equation}
We would like to point out that we have implicitly used uniqueness of the solution which follows from the maximum principle even for an unbounded domain since $\nabla^2_X \mathcal{E}(X-Z)\to 0$ as $Z\to \infty$.
We apply \eqref {eq2.17}
with $Z=\hat{X}_Q$ and note that since
$|X-\hat{X}_Q|\approx \ell(Q)$ for every  $X\in \hat{B}_Q\cap \Omega_{ext}$, we therefore obtain
$$
|I_2(X)|
=
\sigma(Q)|\nabla^2_X \mathcal{E}(X-\hat{X}_Q)|
\lesssim
\sigma(Q)\,|X-\hat{X}_Q|^{-(n+1)}
\approx
\ell(Q)^{-1}.
$$

For $I_1$, we observe that $\Omega$ is an approximating domain whose boundary consists of portions of faces of fattened Whitney cubes of size comparable to $2^{-N}$.  Thus, its (outward) unit normal $\nu$ is well defined a.e. on $\pom$, and we can apply the divergence theorem to obtain
\begin{align*}
I_1(X)
&
=
\sigma(Q)\,\nabla_X^2 \int_{\pom}\mathcal{E}(X-y)\,(\eta_Q(y)-1)\,d\omega^{\hat{X}_Q}(y)
\\
&=
\sigma(Q)\,\nabla_X^2 \int_{\pom}\mathcal{E}(X-y)\,(\eta_Q(y)-1)\,\nabla_Y G(\hat{X}_Q,y)\cdot \nu(y)\,d\sigma(y)
\\
&=
\sigma(Q)\,\nabla_X^2 \iint_{\Omega}\div_Y\big(\mathcal{E}(X-Y)\,(\eta_Q(Y)-1)\,\nabla_Y G(\hat{X}_Q,Y)\big)\,dY
\\
&=
\sigma(Q)\,\nabla_X^2 \iint_{\Omega}\nabla_Y\big(\mathcal{E}(X-Y)\big)\,(\eta_Q(Y)-1)\,\nabla_Y G(\hat{X}_Q,Y)\,dY
\\
&\qquad+
\sigma(Q)\,\nabla_X^2 \iint_{\Omega}\mathcal{E}(X-Y)\nabla\eta_Q(Y)\,\nabla_Y G(\hat{X}_Q,Y)\,dY
\\
&\qquad\qquad+
\sigma(Q)\,\nabla_X^2 \iint_{\Omega}\mathcal{E}(X-Y)(\eta_Q(Y)-1)\,\div_Y(\nabla_Y G(\hat{X}_Q,Y))\,dY
\\
&=:
I_{11}(X)+I_{12}(X)+0\,,
\end{align*}
where we have used that the term in the next-to-last line vanishes,
since $\eta_Q-1$ is supported in $\re^{n+1}\setminus 4\,\hat{B}_Q$,
and since $\hat{X}_Q\in \hat{B}_Q$ implies that $G(\hat{X}_Q,\cdot)$ is harmonic in $\Omega\setminus \hat{B}_Q$. Notice that the integration by parts can be justified even
when $\Omega$ is unbounded, since $\mathcal{E}$ and $G$ have sufficient decay at infinity.

We estimate the terms $I_{11}$ and $I_{12}$ in turn. First, notice that if $X\in \hat{B}_Q$ and $Y\in \Omega\setminus 4\,\hat{B}_Q$ we have $r(\hat{B}_Q)\lesssim |Y-x_Q|\approx |Y-\hat{X}_Q|\approx |Y-X|$. Then, writing $S_k(Q)=2^{k+1}\,\hat{B}_Q\setminus 2^{k}\,\hat{B}_Q$, $k\ge 2$, and using that
$\eta_Q-1$ is supported in $\re^{n+1}\setminus 4\,\hat{B}_Q$, we have that for every $X\in \hat{B}_Q$,
\begin{align}
|I_{11}(X)|
&\le
\sigma(Q)\,\iint_{\Omega}|\nabla_X^2\nabla_Y\mathcal{E}(X-Y)|\,|\eta_Q(Y)-1|\,|\nabla_Y G(\hat{X}_Q,Y)|\,dY \label{I11}
\\
&
\lesssim
\sigma(Q)\iint_{\Omega\setminus 4\,\hat{B}_Q} |Y-X|^{-(n+2)}\,|\nabla_Y G(\hat{X}_Q,Y)|\,dY\nonumber
\\
&
\lesssim
\sigma(Q)\sum_{k=2}^\infty \iint_{\Omega\cap S_k(Q)} |Y-x_Q|^{-(n+2)}\,|\nabla_Y G(\hat{X}_Q,Y)|\,dY \nonumber
\\
&
\lesssim
\ell(Q)^{-2}
\sum_{k=2}^\infty 2^{-(n+2)\,k}\iint_{\Omega\cap S_k(Q)} |\nabla_Y G(\hat{X}_Q,Y)|\,dY\nonumber
\\
&
=:
\ell(Q)^{-2}
\sum_{k=2}^\infty 2^{-(n+2)\,k}\mathcal{I}_k
. \nonumber
\end{align}
To estimate $\mathcal{I}_k$ we cover $S_k(Q)$ by a purely dimensional number of balls meeting $S_k(Q)$ and whose radii are $2^{k-5}\,r(\hat{B}_Q)$. Then, it suffices to get an estimate with the integral restricted to such a ball $B_k$. We may assume without loss of generality that $2\,B_k\not\subset \Omega_{ext}$ for otherwise we have that $\Omega\cap S_k(Q)\cap B_k=\emptyset$. We then have two cases: $2\,B_k\subset \Omega$ and $2\,B_k\not\subset \Omega$. In the second case,
since $2\,B_k$ is neither contained in $\Omega$ nor in $\Omega_{ext}$, there exists
$y_k\in \pom\cap 2\,B_k$.  We then set $\tilde{B}_k=B(y_k,3\,r(B_k))$, and observe that this ball is centered on $\pom$ and contains $B_k$.  Since
$G(\hat{X}_Q,\cdot)$ is harmonic in $2\tilde{B}_k\cap\Omega$ and vanishes
on $\pom$, and since $\Omega$ is an approximating domain in which the Gauss/Green theorem
holds, we can use Caccioppoli's inequality at the boundary
to write
\begin{align*}
\mathcal{I}_{k,B_k}
&
:=
\iint_{\Omega\cap S_k(Q)\cap B_k} |\nabla_Y G(\hat{X}_Q,Y)|\,dY
\\
&\le
\iint_{\Omega\cap \tilde{B}_k} |\nabla_Y G(\hat{X}_Q,Y)|\,dY
\\
&
\lesssim
|\tilde{B}_k|^{\frac12}
\,
\left(\iint_{\Omega\cap \tilde{B}_k} |\nabla_Y G(\hat{X}_Q,Y)|^2\,dY\right)^{1/2}
\\
&
\lesssim
(2^k\,\ell(Q))^{\frac{n+1}2}
\,
r(\tilde{B}_k)^{-1}\,
\left(\iint_{\Omega\cap 2\,\tilde{B}_k} |G(\hat{X}_Q,Y)|^2\,dY\right)^{1/2}
\\
&
\lesssim
(2^k\,\ell(Q))^{\frac{n+1}2-1}
(2^k\,\ell(Q))^{1-n}\,|\tilde{B}_k|^{\frac12}
\\
&
=2^k\,\ell(Q)\,,
\end{align*}
by the size estimates for the Green function plus the fact that $|\hat{X}_Q-Y|\ge 2^{k-2}\,r(\hat{B}_Q)$ for $Y\in \Omega\cap 2\,\tilde{B}_k$.

In the other case, i.e., when $2\,B_k\subset \Omega$, the situation is simpler:  we just use the (interior) Caccioppoli inequality and repeat the same computations with $B_k$ in place of $\tilde{B}_k$.  Summing over the collection of $B_k$'s covering $S_k(Q)$, which for fixed $k$ is a family of uniformly bounded cardinality, we conclude that
\begin{equation}\label{Ik}
\mathcal{I}_k
\lesssim
2^k\,\ell(Q).
\end{equation}
We plug this estimate into \eqref{I11} to obtain
$$
|I_{11}(X)|
\lesssim
\ell(Q)^{-2}\sum_{k=2}^\infty 2^{-(n+2)\,k}\mathcal{I}_k
\lesssim
\ell(Q)^{-1}.
$$

Let us estimate $I_{12}(X)$. We notice that $\nabla\eta_Q$ is supported in $5\,\hat{B}_Q\setminus 4\,\hat{B}_Q$. Take $Y\in 5\,\hat{B}_Q\setminus 4\,\hat{B}_Q$. If $X\in \hat{B}_Q$ then $|X-Y|\approx |\hat{X}_Q-Y|\approx \ell(Q)$ which yields $|\nabla_X^2 \mathcal{E}(X-Y)|\lesssim|X-Y|^{-(n+1)}\approx \ell(Q)^{-(n+1)}$. Then using \eqref{Ik} with $k=2$ we obtain
$$
|I_{12}(X)|
\lesssim
\ell(Q)^{-2}\iint_{\Omega\cap5\,\hat{B}_Q\setminus 4\,\hat{B}_Q} |\nabla_Y G(\hat{X}_Q,Y)|\,dY
\lesssim
\ell(Q)^{-2}\mathcal{I}_2
\lesssim
\ell(Q)^{-1}.
$$

Collecting our estimates for $I_{11}$ and $I_{12}$ we have shown that $|I_1(X)|\lesssim \ell(Q)^{-1}$. Thus, for all $X\in \hat{B}_Q\cap\Omega_{ext}$ we have
$$
|\nabla_X^2\mathcal{S}b_Q(X)|
\le
C\,\ell(Q)^{-1}.
$$

\begin{remark}\label{remark:I1}
In the previous argument, for the estimate of $I_1$ we have only used that $X\in\hat{B}_Q$ and we have not used that $X\in\Omega_{ext}$.
\end{remark}

Next we let $X\in \hat{B}_Q\cap\Omega$, and suppose first that
$\delta(X)\gtrsim \ell(Q)$. If $y\in 5\hat{\Delta}_Q$ we have
$|\nabla_X^2\mathcal{E}(X-y)|\lesssim |X-y|^{-(n+1)}\le \delta(X)^{-(n+1)}\lesssim \ell(Q)^{-(n+1)}$.
Consequently,
$$
|\nabla_X^2\mathcal{S}b_Q(X)|
=
\left|\int_{\pom}\nabla_X^2\mathcal{E}(X-y)\,b_Q(y)\,d\sigma(y)\right|
\lesssim
\ell(Q)^{-1}  \omega^{\hat{X}_Q}(5\,\hat{\Delta}_Q)
\le
\ell(Q)^{-1}.
$$

It remains to treat the case $X\in \hat{B}_Q\cap\Omega$, with $\delta(X)\le c \ell(Q)$, where $c$ is to be chosen. Notice that
$$
2\,\ell(Q)/C\le \delta(\hat{X}_Q)\le
|\hat{X}_Q-X|+\delta(X)
\le
|\hat{X}_Q-X|+c\,\ell(Q).
$$
Then we pick $c<C^{-1}$ we obtain $|\hat{X}_Q-X|>\ell(Q)/C$. Next, we write as before $\nabla_X^2\mathcal{S}b_Q(X)=I_1(X)+I_2(X)$. For $I_1$, since $X\in \hat{B}_Q$, by Remark \ref{remark:I1} we conclude that $|I_1(X)|\lesssim \ell(Q)^{-1}$.
For $I_2$ we observe that \eqref{eq2.greendef} gives
\begin{multline*}
|I_2(X)|
=
\sigma(Q)\,\left|\int_{\pom}\nabla_X^2\mathcal{E}(X-y)\,d\omega^{\hat{X}_Q}(y)\right|
=
\sigma(Q)\,\big|\nabla_X^2 \big(\mathcal{E}(X-\hat{X}_Q)-G(X,\hat{X}_Q)\big)\big|
\\
\lesssim
\sigma(Q)\,|X-\hat{X}_Q|^{-(n+1)}+\sigma(Q)\,|\nabla_X^2 G(X,\hat{X}_Q)|
\lesssim
\ell(Q)^{-1}+\sigma(Q)\,|\nabla_X^2 G(X,\hat{X}_Q)|.
\end{multline*}
Therefore, collecting all of our estimates, we have shown that for every $X\in \hat{B}_Q$,
\begin{equation}\label{estimate-sf}
|\nabla_X^2\mathcal{S}b_Q(X)|
\le
C\,\ell(Q)^{-1}+\sigma(Q)\,|\nabla_X^2 G(X,\hat{X}_Q)|\,\chi_{\hat{B}_Q\cap\Omega\cap\{\delta(X)\le c\ell(Q)\}}(X).
\end{equation}

Notice that if $x\in \pom$ then
\begin{multline*}
\iint_{\tilde{\Lambda}_{Q}(x)}\,\frac{dY}{\delta(Y)^{n-1}}
\le
\sum_{x\in Q'\in\dd_Q}\,\sum_{I\in \W_{Q'}\cup\W_{Q'}^{ext}} \iint_{I}\,\frac{dY}{\delta(Y)^{n-1}}
\\
\approx
\sum_{x\in Q'\in\dd_Q}\,\sum_{I\in \W_{Q'}\cup\W_{Q'}^{ext}} \frac{\ell(I)^{n+1}}{\ell(I)^{n-1}}
\lesssim
\sum_{x\in Q'\in\dd_Q}\,\ell(Q')^2
\lesssim
\ell(Q)^2
\end{multline*}
where we have used that $\# (\W_{Q'}\cup\W_{Q'}^{ext})\lesssim C_{C_0}$. Therefore, using \eqref{local-cone-ball} and \eqref{estimate-sf}
we have
\begin{align*}
&\int_{Q}\left(\iint_{\tilde{\Lambda}_{Q}(x)} |\nabla^2 \mathcal{S}b_Q(Y)|^2 \,\frac{dY}{\delta(Y)^{n-1}}
\right)^{\frac{q}2}\,d\sigma(x)
\\
&\lesssim
\ell(Q)^{-q}
\int_{Q}\left(\iint_{\tilde{\Lambda}_{Q}(x)}\,\frac{dY}{\delta(Y)^{n-1}}
\right)^{\frac{q}2}\,d\sigma(x)
\\
&\qquad+
\sigma(Q)^q\,
\int_{Q}\left(\iint_{\Lambda_{Q}(x)\cap \{\delta(Y)\le c\ell(Q)\}} |\nabla_Y^2 G(Y,\hat{X}_Q)|^2\,\frac{dY}{\delta(Y)^{n-1}}
\right)^{\frac{q}2}\,d\sigma(x)
\\
&\lesssim
\sigma(Q)
+
\sigma(Q)^q\,
\int_{Q} S_{Q} u(x)^q\,d\sigma(x)
\end{align*}
where we set $u(Y)=\nabla_Y G(Y,\hat{X}_Q)$
and
$$
S_{Q}u(x)
=
\left(\iint_{\Gamma_{Q}(x)} |\nabla u(Y)|^2\,\frac{dY}{\delta(Y)^{n-1}}\right)^{\frac12},
\qquad
\Gamma_{Q}(x):=  \bigcup_{x\in Q'\in\dd_{Q}} U_{Q'}.
$$
We claim that
\begin{equation}\label{desired-S-Green}
\int_{Q} S_{Q} u(x)^q\,d\sigma(x)
\lesssim
\sigma(Q)^{1-q}.
\end{equation}
Assuming this momentarily we obtain as desired \eqref{Tb-S-bQ}:
$$
\int_{Q}\left(\iint_{\tilde{\Lambda}_{Q}(x)} |\nabla^2 \mathcal{S}b_Q(Y)|^2 \,\frac{dY}{\delta(Y)^{n-1}}
\right)^{\frac{q}2}\,d\sigma(x)
\lesssim
\sigma(Q).
$$
Modulo the claim \eqref{desired-S-Green}, we have now verified all the hypotheses of
Theorem \ref{theor:Tb} and we therefore conclude that \eqref{Tb-conclusion} holds.
The latter in turn implies that $\pom$ is UR (see \cite[p. 44]{DS2}).

To complete this stage of our proof, it remains to establish \eqref{desired-S-Green}.  We do this
in Section \ref{section:proof-S}, after first proving a bound for the square function in terms of the non-tangential maximal function in Section \ref{section:good-lambda}.
As mentioned before, at present, $\Omega$ is actually an approximating domain (i.e., $\Omega$ stands for $\Omega_N$ with $N$ large enough). Thus, the conclusion that we have obtained in the current step is that for every $N\gg 1$, we have that $\Omega_N$ has the UR property with uniform constants. In
Section \ref{section:UR-general} we show that this property may be transmitted to $\Omega$,
thereby completing the proof of Theorem \ref{theorem:converse}.

\subsection{Step 3: Good-$\lambda$ inequality for the square function and the non-tangential maximal function}\label{section:good-lambda}
We recall that $U_Q=\bigcup_{\mathcal{W}_Q^*} I^*$ with  $I^*=(1+\lambda)I$,
where $0<\lambda\le \lambda_0/2$ was fixed above. As mentioned in Remark \ref{remark:lambda0}, since $0<2\,\lambda\le \lambda_0$, the fattened Whitney boxes $I^{**}=(1+2\,\lambda)I$,
and corresponding Whitney regions $U_Q^*:=\bigcup_{\mathcal{W}_Q^*} I^{**}$ enjoy the same properties as do $I$ and $U_Q$.

Let us set some notation:  for fixed $Q_0\in\dd$,  and for $x\in Q_0$,
$$
S_{Q_0}u(x)
=
\left(\iint_{\Gamma_{Q_0}(x)} |\nabla u(Y)|^2\,\frac{dY}{\delta(Y)^{n-1}}\right)^{\frac12},
\qquad\qquad
\tilde{N}_{Q_0,*} u(x)
=
\sup_{Y\in \tilde{\Gamma}_{Q_0}(x)} |u(Y)|
$$
where
$$
\Gamma_{Q_0}(x):=  \bigcup_{x\in Q\in\dd_{Q_0}} U_Q,
\qquad\qquad
\tilde{\Gamma}_{Q_0}(x):= \bigcup_{x\in Q\in\dd_{Q_0}} U_Q^*.
$$
We also define a localized dyadic maximal operator
$$
\mathcal{M}_{Q_0} f(x)
=
\sup_{x\in Q\in\dd_{Q_0}} \fint_{Q} |f(y)|\,d\sigma(y).
$$
Given any $Q\in\dd_{Q_0}$,  for any family $\F\subset\dd_{Q_0}$ of disjoint dyadic cubes we define the fattened versions of the Carleson box $T_Q$ and the local sawtooth  $\Omega_{\F,Q_0}$ by
$$
\tilde{T}_{Q_0}=
{\rm int } \left(\bigcup_{Q\in\dd_{Q_0}} U_Q^*\right),
\qquad
\tilde{\Omega}_{\F,Q_0}
=
{\rm int } \left(\bigcup_{Q\in\dd_{\F,Q_0}} U_Q^*\right).
$$
It is straightforward to show that there exists $\kappa_0$ (depending on $K_0$) large enough such that
\begin{equation}\label{condition:kappa0}
\overline{\tilde{T}_{Q_0}}\subset B(x_{Q_0}, \kappa_0\,\ell(Q_0))\cap\overline{\Omega}.
\end{equation}
Let us write $B^{\kappa_0}_{Q_0}=B(x_{Q_0}, \kappa_0\,\ell(Q_0))$ and note that in particular, for every $x\in Q_0$ and every pairwise disjoint family
$\F\subset\dd_{Q_0}$, we have
\begin{equation}\label{condition:kappa0-cones-sawtooth}
\overline{\Gamma_{Q_0}(x)}\subset
\overline{\tilde{\Gamma}_{Q_0}(x)}
\subset
\overline{\tilde{T}_{Q_0}}\subset
B^{\kappa_0}_{Q_0}\cap\overline{\Omega}\,,
\qquad
\overline{\tilde{\Omega}_{\F,Q_0}}\subset \overline{\tilde{T}_{Q_0}}\subset  B^{\kappa_0}_{Q_0}\cap\overline{\Omega}.
\end{equation}

\begin{proposition}\label{prop:good-lambda}
Given $Q_0\in\dd$, let $u$ be harmonic in $2B^{\kappa_0}_{Q_0}\cap\Omega$. Then, for every $1<q<\infty$ we have
\begin{equation}\label{Sf-NT}
\|S_{Q_0} u\|_{L^q(Q_0)}
\le
C\,\|\tilde{N}_{Q_0,*} u\|_{L^q(Q_0)},
\end{equation}
provided the left hand side is finite.
\end{proposition}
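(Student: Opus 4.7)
The plan is to follow the classical good-$\lambda$ strategy adapted to the dyadic, truncated setting. Specifically, I would establish
\begin{equation*}
\sigma\bigl(\{x\in Q_0: S_{Q_0}u(x)>2t,\ \tilde N_{Q_0,*}u(x)\le\gamma t\}\bigr)\le C\gamma^{2}\sigma\bigl(\{x\in Q_0: S_{Q_0}u(x)>t\}\bigr)
\end{equation*}
for every $t>0$ and all sufficiently small $\gamma\in(0,1)$, then integrate against $q\,t^{q-1}\,dt$. Choosing $\gamma$ small enough that $2^{q}C\gamma^{2}\le 1/2$, and invoking the hypothesis $\|S_{Q_0}u\|_{L^{q}(Q_0)}<\infty$ to absorb the $S_{Q_0}u$-term on the right into the left, yields \eqref{Sf-NT}.

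For the good-$\lambda$ step, fix $t>0$ and let $\mathcal F=\{Q_j\}\subset\mathbb{D}_{Q_0}$ be the maximal dyadic subcubes of $Q_0$ contained in $E_{t}:=\{S_{Q_0}u>t\}$. By maximality each parent $\hat Q_j$ contains a point $x_j^{\#}\notin E_{t}$, and for every $x\in Q_j$ the strict ancestors of $Q_j$ in $\mathbb{D}_{Q_0}$ all contain both $x$ and $x_j^{\#}$, so their contribution to $S_{Q_0}u(x)^{2}$ is bounded by $S_{Q_0}u(x_j^{\#})^{2}\le t^{2}$. Hence whenever $x\in Q_j$ satisfies $S_{Q_0}u(x)>2t$ the local piece
\begin{equation*}
S_j^{\mathrm{loc}}u(x)^{2}\;:=\;\sum_{Q\in\mathbb{D}_{Q_j},\,Q\ni x}\iint_{U_{Q}}|\nabla u|^{2}\,\delta^{1-n}\,dY
\end{equation*}
must exceed $3t^{2}$. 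If $Q_j\cap\{\tilde N_{Q_0,*}u\le\gamma t\}$ is empty there is nothing to estimate on $Q_j$; otherwise fix a reference point $x_j^{*}$ in this intersection, and note that $U_{Q}^{*}\subset\tilde\Gamma_{Q_0}(x_j^{*})$ for every $Q\in\mathbb{D}_{Q_j}$, hence $|u|\le\tilde N_{Q_0,*}u(x_j^{*})\le\gamma t$ throughout the fattened Carleson box $\tilde T_{Q_j}$.

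By Chebyshev it therefore suffices to prove the Carleson-measure bound
\begin{equation*}
\int_{Q_j}\bigl|S_j^{\mathrm{loc}}u\bigr|^{2}\,d\sigma\;\approx\;\iint_{T_{Q_j}}|\nabla u|^{2}\,\delta\,dY\;\lesssim\;(\gamma t)^{2}\sigma(Q_j),
\end{equation*}
where the first comparison follows from Fubini together with $\sigma(Q)\approx\ell(Q)^{n}\approx\delta^{n}$ on $U_{Q}$ and the bounded overlap of $\{U_{Q}\}_{Q\in\mathbb{D}_{Q_j}}$. The right-hand inequality is the classical Carleson-measure estimate for bounded harmonic functions, which I would derive via the identity $\Delta(u^{2})=2|\nabla u|^{2}$: with a smooth cutoff $\psi\equiv 1$ on $T_{Q_j}$, supported in $\tilde T_{Q_j}$, $|\nabla\psi|\lesssim\ell(Q_j)^{-1}$, and with $\delta$ replaced by a harmless smoothing if needed, integrating by parts twice on the region where $u$ is harmonic (guaranteed by the hypothesis and \eqref{condition:kappa0}) yields
\begin{equation*}
2\iint|\nabla u|^{2}\psi^{2}\delta\,dY \;=\; \iint u^{2}\,\Delta(\psi^{2}\delta)\,dY\;+\;\text{boundary on }\pom.
\end{equation*}
The bound $|\Delta(\psi^{2}\delta)|\lesssim\ell(Q_j)^{-1}$ is supported on a transition set of volume $\approx\ell(Q_j)^{n+1}$; the boundary integral on $\pom$ is controlled by the piecewise-flat structure of $\Omega_N$. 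Both contributions are $\lesssim M^{2}\sigma(Q_j)$ with $M=\gamma t$, establishing the Carleson bound. Summing the resulting $\gamma^{2}\sigma(Q_j)$ over $j$ and using $\sum_{j}\sigma(Q_j)\le\sigma(E_{t})$ completes the good-$\lambda$ inequality.

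The main obstacle is the Carleson-measure bound itself: a naive scale-by-scale Caccioppoli diverges logarithmically, since $\iint_{T_{Q_j}}\delta^{-1}\,dY=\infty$, so one really needs the $\Delta(u^{2})$-identity (or an equivalent global device) to extract the cancellation built into harmonicity. The piecewise-flat structure of the approximating domain $\Omega_N$ is what makes all boundary terms on $\pom$ manageable, and the pre-arranged fattening $U_{Q}^{*}\supset U_{Q}$ baked into the definition of $\tilde N_{Q_0,*}$ (Remark~\ref{remark:lambda0}) is exactly what lets $|u|$ be controlled on the whole $\tilde T_{Q_j}$ by a single non-tangential value; all constants are uniform in $Q_0$ and in the approximation parameter $N$.
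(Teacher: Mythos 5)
Your good-$\lambda$ skeleton and the Fubini identity $\int_{Q_j}|S_j^{\mathrm{loc}}u|^{2}\,d\sigma\approx\iint_{T_{Q_j}}|\nabla u|^{2}\,\delta\,dY$ are fine, but the argument breaks at the claim that ``$U_{Q}^{*}\subset\tilde\Gamma_{Q_0}(x_j^{*})$ for every $Q\in\mathbb{D}_{Q_j}$.'' The cone $\tilde\Gamma_{Q_0}(x_j^{*})$ is the union of $U_{Q'}^{*}$ over cubes $Q'$ \emph{containing} $x_j^{*}$; a generic $Q\in\mathbb{D}_{Q_j}$ is not an ancestor of $x_j^{*}$, so $U_{Q}^{*}$ need not lie in the cone, and the single value $\tilde N_{Q_0,*}u(x_j^{*})\le\gamma t$ gives you no control on $u$ over the bulk of $\tilde T_{Q_j}$. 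To get $|u|\le\gamma\lambda$ on a region, the paper runs a stopping-time construction: it fixes a compact $F\subset F_\lambda$, stops at maximal cubes $Q_k$ with $Q_k\cap F=\emptyset$, and then controls $u$ only on the \emph{sawtooth} $\tilde\Omega_{\mathcal F,P_0}$, where every $Q\in\mathbb D_{\mathcal F,P_0}$ automatically meets $F$ and hence carries a witness point. Absent that device, the Carleson-measure bound you invoke simply has nothing to cling to.

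Even with the sawtooth in hand, your route to the Carleson bound via $\mathcal L(u^{2})=2|\nabla u|^{2}$ with the weight $\delta$ (or a smooth regularization $\rho\approx\delta$) does not close: the term $\psi^{2}\,\Delta\rho$ contributes at the order $\delta^{-1}$ across the whole region, reproducing exactly the logarithmic divergence you flag. In the half-space this vanishes because $\Delta t=0$; in a rough ADR domain it does not. Indeed the ``classical Carleson-measure estimate for bounded harmonic functions'' you cite is, in this generality, essentially equivalent to uniform rectifiability and cannot be taken off the shelf here. The paper instead uses the Green's function $\tilde G_{\star}(X_0,\cdot)$ of the sawtooth as the weight: integrating $\mathcal L(u^{2})$ against $\tilde G_{\star}$ produces the clean identity $\iint|\nabla u|^{2}\tilde G_\star\,dY=-\tfrac12 u(X_0)^{2}+\tfrac12\int u^{2}\,d\tilde\omega_\star^{X_0}$ with no error term, and by the CFMS-type comparison $\tilde G_\star/\tilde\delta_\star\approx$ (Poisson kernel density), the Fubini must then be taken against the sawtooth harmonic measure $\tilde\omega_\star$, yielding $\tilde\omega_\star(F)\lesssim\gamma^{2}$. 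Converting this back to $\sigma(F)\lesssim\gamma^{2\vartheta/\theta}\sigma(P_0)$ is where the paper's Lemma~\ref{lemma:self-impro-RHp} and the projection lemma \cite[Lemma 6.15]{HM-I} enter, and this step \emph{uses the standing reverse-H\"older hypothesis on the Poisson kernel}, which your proposal never invokes. That omission is the clearest sign that the proposed pure real-variable argument cannot work: the good-$\lambda$ inequality for the conical square function against surface measure in an arbitrary $1$-sided NTA domain is not a free lunch, and the analytic hypothesis of the theorem is indispensable in the paper's proof of this very proposition.
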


\noindent{\it Remark.} Let us emphasize that in this part of the argument, $u$ is allowed to be any
harmonic function in $2\,B^{\kappa_0}_{Q_0}\cap\Omega$, not only $\nabla G(\cdot,\hat{X}_Q)$
as above.

\begin{proof}
The proof is based on the standard ``good-$\lambda$'' argument of \cite{DJK}
(see also \cite{Br}), but adjusted to our setting.  Fix $1<q<\infty$ and assume (qualitatively)  that the left hand side of \eqref{Sf-NT} is finite. In particular we have that
$$
\fint_{Q_0} S_{Q_0} u\,d\sigma
\le
\left(\fint_{Q_0} (S_{Q_0} u)^q\,d\sigma\right)^{\frac1q}
<\infty.
$$
We consider first the case that $\lambda>\fint_{Q_0} S_{Q_0} u\,d\sigma$.
Set $$E_\lambda=\{x\in Q_0: \mathcal{M}_{Q_0}(S_{Q_0} u)(x)>\lambda\}\,.$$
Then by the usual Calder\'{o}n-Zygmund decomposition argument,
there exists a pairwise disjoint family of cubes $\{P_j\}_j\subset\dd_{Q_0}\setminus\{Q_0\}$,
which are maximal with respect to the property that
$\fint_{P_j} S_{Q_0} u\,d\sigma >\lambda$, such that $E_\lambda=\cup_j P_j$.
Let $\tilde{P}_j$ denote the dyadic parent of $P_j$. Then
$\tilde{P}_j$ must contain a subset of positive $\sigma$-measure
on which $S_{Q_0}u(x)\le \lambda$, by maximality of $P_j$.
Fix a cube in $\{P_j\}_j$, say $P_0$, and set
$$
F_\lambda
=
\{x\in P_0: S_{Q_0} u(x)>\beta\,\lambda, \tilde{N}_{Q_0,*} u(x) \le \gamma\,\lambda\}
$$
where $\beta>1$ is to be chosen and $0<\gamma<1$. We are going to show that
$$
\sigma(F_\lambda)\le C\,\gamma^{\theta}\,\sigma(P_0).
$$
Assume that $\sigma(F_\lambda)>0$ otherwise there is nothing to prove. We claim
that if $\beta$ is large enough, depending only on the ADR constants,
then $\sigma(F_\lambda)<\sigma(P_0)$. Otherwise, we would have $S_{Q_0} u(x)>\beta\,\lambda$ for a.e. $x\in P_0$ and then by the maximality of $P_0$ and the fact that $\pom$ is ADR,
$$
\beta\,\lambda
<
\fint_{P_0} S_{Q_0} u d\sigma
\le
C_1\,\fint_{\tilde{P}_0} S_{Q_0} u d\sigma
\le C_1\,\lambda.
$$
Choosing $\beta>C_1$, we obtain a contradiction.
Thus,  $\sigma(F_\lambda)<\sigma(P_0)$, so by the inner regularity of $\sigma$,
there exists a compact set $F\subset F_\lambda\subset P_0$ such that
$$
0<\frac12\,\sigma(F_\lambda)\le \sigma(F)\le \sigma(F_\lambda)<\sigma(P_0).
$$
Since $\sigma(F)<\sigma(P_0)$, it follows that  ${\rm int}(P_0)\setminus F$ is a non-empty open set.
By a standard stopping time procedure, we may then subdivide $P_0$ dyadically, to extract a pairwise disjoint family
of cubes $\F=\{Q_j\}_j\subset\dd_{P_0}\setminus\{P_0\}$, which are maximal with respect to the property that $Q_j\cap F=\emptyset$.

We see next that $F=P_0\setminus(\cup_\F Q_j)$.  If $x\in F\subset P_0$ and $x\in Q_j$ then $x\in Q_j\cap F$ which contradicts the fact that $Q_j\cap F=\emptyset$. On the other hand, let $x\in P_0\setminus(\cup_\F Q_j)$. Pick $Q_k^x\in\dd_{P_0}$ the unique cube containing $x$ with $\ell(Q_k^x)=2^{-k}\,\ell(P_0)$, $k\ge 1$. Note that $Q_k^x\cap F\neq\emptyset$, otherwise $Q_k^x\subset Q_j$ for some $j$ and we would have $x\in Q_j$. Let $x_k\in Q_k^x\cap F$. Then $|x_k-x|\lesssim \ell(Q_k^x)=2^{-k}\,\ell(P_0)$ and therefore $x_k\to x$ as $k\to\infty$. Since $F$ is closed and $x_k\in F$ we conclude that $x\in F$ as desired.

Notice that for any $x\in F=P_0\setminus(\cup_\F Q_j)$ and for any $z\in \tilde{P}_0$ we have that
\begin{multline*}
\Gamma_{Q_0}(x)
=
\bigcup_{x\in Q\in\dd_{Q_0}} U_Q
=
\bigg(\bigcup_{x\in Q\in\dd_{Q_0}, Q\subset P_0} U_Q\bigg)
\bigcup
\bigg(\bigcup_{x\in Q\in\dd_{Q_0}, Q\supsetneq P_0} U_Q\bigg)
\\
=
\Gamma_{P_0}(x)\bigcup
\bigg(\bigcup_{Q\in\dd_{Q_0}, Q\supset \tilde{P}_0} U_Q\bigg)
\subset
\Gamma_{P_0}(x)\bigcup
\bigg(\bigcup_{z\in Q\in\dd_{Q_0}} U_Q\bigg)
=
\Gamma_{P_0}(x)\cup \Gamma_{Q_0}(z).
\end{multline*}
Let us recall that $P_0$ is a Calder\'{o}n-Zygmund cube in $E_\lambda$ and that we can pick $z_0\in \tilde{P}_0$ with $S_{Q_0} u(z_0)\le \lambda$ (indeed we know that this happens in a set of positive measure in $\tilde{P}_0$.) Then for any $x\in F\subset F_\lambda$ we have
\begin{align}\label{local-Sf}
\beta\,\lambda
&
<
S_{Q_0} u(x)
=
\left(\iint_{\Gamma_{Q_0}(x)} |\nabla u(Y)|^2\,\frac{dY}{\delta(Y)^{n-1}}\right)^{\frac12}
\\
&
\le
\left(\iint_{\Gamma_{P_0}(x)} |\nabla u(Y)|^2\,\frac{dY}{\delta(Y)^{n-1}}\right)^{\frac12}
+
\left(\iint_{\Gamma_{Q_0}(z_0)} |\nabla u(Y)|^2\,\frac{dY}{\delta(Y)^{n-1}}\right)^{\frac12}
\nonumber
\\
&=
S_{P_0} u(x)+ S_{Q_0}u(z_0)
\nonumber
\\
&\le
S_{P_0} u(x)+\lambda \nonumber
\end{align}
and therefore $ F\subset \{x\in P_0: S_{P_0} u(x)>(\beta-1)\,\lambda\}$.

Next we claim that
\begin{equation}\label{cones-sawtooth}
\bigcup_{x\in F} \Gamma_{P_0}(x)
\,\subset\,
\bigcup_{Q\in\dd_{\F,P_0}} U_Q
\,\subset\,
\tilde{\Omega}_{\F,P_0}
\end{equation}
where $\tilde{\Omega}_{\F,P_0}$ is the fattened version of $\Omega_{\F,P_0}$ defined above.
The second containment in \eqref{cones-sawtooth} is trivial (since $U_Q\subset U_Q^*$),
so let us verify the first.
We take $Y\in \Gamma_{P_0}(x)$ with $x\in F$. Then, $Y\in U_Q$  where $x\in Q\in\dd_{P_0}$. Since $x\in F=P_0\setminus(\cup_\F Q_j)$ we must have $Q\in \dd_\F$ (otherwise $Q\subset Q_j$ for some $Q_j\in\F$ and this would imply that $x\in Q_j$) and therefore $Q\in \dd_{\F,P_0}$ which gives the first inclusion.

Write $\tomt=\tomt^{X_0}$ to denote the harmonic measure for the domain $\tilde{\Omega}_{\F,P_0}$ where $X_0=A_{P_0}$ is given in \cite[Proposition 6.4]{HM-I} and \cite[Corollary 6.6]{HM-I}
(which we may apply to $\tilde{\Omega}_{\F,P_0}$ in place of
$\Omega_{\F,P_0}$
since $0<2\,\lambda\le\lambda_0$, see Remark \ref{remark:lambda0}).
Let us also write $\tilde{\delta}_\star(Y)$ to denote the distance from $Y$ to $\partial\tilde{\Omega}_{\F,P_0}$, and $\tilde{G}_\star$ to denote the corresponding Green function.   Given $Y\in \tilde{\Omega}_{\F,P_0}$, let us choose $y_Y\in
\partial\tilde{\Omega}_{\F,P_0}$ such that $|Y-y_Y|= \tilde{\delta}_\star(Y)$.
By definition, for $x\in F$ and $Y\in\Gamma_{P_0}(x)$,
there is a $Q\in\dd_{P_0}$ such that $Y\in U_Q$ and $x\in Q$.
Thus, by the triangle inequality, and the definition of $U_Q$, we have that for $Y\in\Gamma_{P_0}(x)$,
\begin{equation}\label{eq2.29}
|x-y_Y| \leq |x-Y|+  \tilde{\delta}_\star(Y) \approx \delta(Y) +  \tilde{\delta}_\star(Y) \approx
\tilde{\delta}_\star(Y)\,,
\end{equation}
where in the last step we have used that
\begin{equation}\label{eq2.30}
\delta(Y) \approx  \tilde{\delta}_\star(Y) \quad {\rm for}\,\,\, Y\in\bigcup_{Q\in\dd_{\F,P_0}} U_Q\,.
\end{equation}
Then,  since $F=P_0\setminus(\cup_\F Q_j)\subset\pom\cap\partial \tilde{\Omega}_{\F,P_0}$, see  \cite[Proposition 6.1]{HM-I},
we have
\begin{align}\label{sf-L2}
&\tomt(F)\,(\lambda\,(\beta-1))^2
=
\int_F (\lambda\,(\beta-1))^2\,d\tomt
\le
\int_F S_{P_0} u(x)^2\,d\tomt(x)
\\
&
\qquad=
\int_F \iint_{\Gamma_{P_0}(x)} |\nabla u(Y)|^2\,\frac{dY}{\delta(Y)^{n-1}}\,d\tomt(x)
\nonumber
\\
&
\qquad\lesssim
\iint_{\bigcup\limits_{Q\in\dd_{\F,P_0}}\!\!\!\! U_Q} |\nabla u(Y)|^2\,\delta(Y)\, \left(\frac1{\delta(Y)^n}\,\int_{F\cap B(y_Y,C\,\tilde{\delta}_\star(Y))} \,d\tomt(x) \right)\,dY
\nonumber
\\
&
\qquad\lesssim
\iint_{\tilde{\Omega}_{\F,P_0}} |\nabla u(Y)|^2\,\tilde{\delta}_\star(Y)\,\frac{\tomt^{X_0}(\tilde{\Delta}_\star(y_Y,C\,\tilde{\delta}_\star(Y))}{(C\,\tilde{\delta}_\star(Y))^n}\,dY,
\nonumber
\end{align}
where we have used  \eqref{eq2.29}, \eqref{cones-sawtooth}and \eqref{eq2.30}.

We now claim that for $Y\in \tilde{\Omega}_{\F,P_0}$, we have
\begin{equation}\label{eq2.32}
\frac{\tomt^{X_0}(\tilde{\Delta}_\star(y_Y,C\,\tilde{\delta}_\star(Y))}{(C\,\tilde{\delta}_\star(Y))^n}
\lesssim
\frac{\tilde{G}_\star(X_0,Y)}{\tilde{\delta}_\star(Y)}.
\end{equation}
Indeed,  if $\tilde{\delta}_\star(Y)< \tilde{\delta}_\star(X_0)/(2\,C),$ then \eqref{eq2.32} is immediate by Lemma \ref{lemma2.cfms}  and \eqref{eq2.green4}.   Otherwise, we have
$ \tilde{\delta}_\star(Y)\approx \tilde{\delta}_\star(X_0)\approx\ell(P_0)\gtrsim |X_0-Y|,$
whence \eqref{eq2.32} follows directly from \eqref{eq2.green2} and the Harnack Chain condition,
and the fact that harmonic measure is a probability measure.

We recall that by hypothesis, $u$ is harmonic in $2B^{\kappa_0}_{Q_0}\cap\Omega\supset\tilde{\Omega}_{\F,P_0}$ (cf. \eqref{condition:kappa0-cones-sawtooth}).
Let $\mathcal{L} := \nabla\cdot\nabla$ denote the usual Laplacian in $\ree$, so that
$\mathcal{L} (u^2)= 2\,|\nabla u|^2$ in
$\tilde{\Omega}_{\F,P_0}$.
Combining these observations with
\eqref{sf-L2}-\eqref{eq2.32}, we see that
\begin{align}\label{eq2.33}
\tomt(F)\,(\lambda\,(\beta-1))^2
&\lesssim
\iint_{\tilde{\Omega}_{\F,P_0}} |\nabla u(Y)|^2\,\tilde{G}_\star(X_0,Y)\,dY
\\[4pt]
&=\frac12
\iint_{\tilde{\Omega}_{\F,P_0}} \mathcal{L}(u^2)(Y)\,\tilde{G}_\star(X_0,Y)\,dY \nonumber
\\[4pt]
&=-\frac12 u(X_0)^2+ \frac12
\int_{\partial\tilde{\Omega}_{\F,P_0}} u(y)^2 d\tomt^{X_0}(y). \nonumber
\end{align}
where the last step is a well known identity obtained using properties of the Green function.

Let $Y\in\tilde{\Omega}_{\F,P_0}$, so that $Y\in U_Q^*$ for some $Q\in\dd_{\F,P_0}$. By definition of $\dd_{\F,P_0}$, this $Q$ cannot be contained in any $Q_j\in\F$.  Therefore, $Q\cap F\neq\emptyset$.
Indeed, otherwise $Q\cap F=\emptyset$, which by maximality of the cubes in $\F$, would imply that $Q\subset Q_j$ for some $Q_j\in\F$, a contradiction. Thus, there is some $x\in Q\cap F$ which then satisfies $x\in P_0\setminus (\cup_\F Q_j)$ and $x\in Q\in\dd_{P_0}$. Hence, $Y\in U_Q^*\subset\tilde{\Gamma}_{P_0}(x)$ with $x\in F$. Since  $F\subset F_\lambda$ we have that
$$
|u(Y)|
\le
\sup_{Z\in \tilde{\Gamma}_{P_0}(x)}|u(Z)|
\le
\tilde{N}_{Q_0,*} u(x)\le \gamma\,\lambda.
$$
Thus, $|u(Y)|\le \gamma\,\lambda$ and in particular $u(Y)\ge -\gamma\,\lambda$, for all $Y\in\tilde{\Omega}_{\F,P_0}$. Next, we apply \cite[Theorem 6.4]{JK} to the (qualitative)
NTA domain $\tilde{\Omega}_{\F,P_0}$
(we recall that, in the present stage of the argument,
$\Omega$ is actually $\Omega_N$ for some large $N$, which satisfies the qualitative
exterior Corkscrew condition (Definition \ref{qualextCS});  thus, the bounded domain
$\tilde{\Omega}_{\F,P_0}$ enjoys an
exterior Corkscrew condition with constants that may depend very badly on $N$.)
Of course, the interior Corkscrew and Harnack chain constants, as well as the ADR constants,
are controlled uniformly in $N$).
Consequently,
we obtain that $u$ has non-tangential limit $\tomt$-a.e.. Since $|u(Y)|\le \gamma\,\lambda$ for every $Y\in\tilde{\Omega}_{\F,P_0}$, its non-tangential limit therefore
satisfies this same bound $\tomt$-a.e., i.e., for $\tomt$-a.e. $y\in \partial\tilde{\Omega}_{\F,P_0}$ we have $|u(y)|\le \gamma\,\lambda$. Consequently, by \eqref{eq2.33}, we conclude that
$
\tomt(F)\,(\lambda\,(\beta-1))^2
\lesssim
(\gamma\,\lambda)^2
$
and then $\tomt(F)\lesssim\gamma^2$.

We use the notation of \cite[Lemma 6.15]{HM-I} (with $\tilde{\Omega}_{\F,P_0}$ and $\tomt^{X_0}$ in place of $\Omega_{\F,Q_0}$ and $\hm_\star^{X_0}$). Since $P_0$ is not contained in any $Q_j\in\F$,
\begin{align*}
\P_\F\nu(P_0)
&=
\tomt^{X_0}\Big(P_0\setminus (\cup_\F Q_j)\Big) +\sum_{Q_j\in\F, Q_j\subsetneq Q} \frac{\sigma(P_0\cap Q_j)}{\sigma(Q_j)}\,
\tomt^{X_0}(P_j)
\\
&=
\tomt^{X_0} \Big(P_0\setminus (\cup_\F Q_j)\Big) +\sum_{Q_j\in\F, Q_j\subsetneq P_0} \tomt^{X_0}(P_j)
\\
&
\gtrsim
\tomt^{X_0}\Big(P_0\setminus (\cup_\F Q_j)\Big) + \sum_{Q_j\in\F, Q_j\subsetneq Q}
\tomt^{X_0}\left(B(x_j^\star,r_j)\cap \partial \Omega_{\F,P_0}\right)
\\
&
\geq
\tomt^{X_0}(
\tilde{\Delta}_\star^{P_0}),
\end{align*}
where in the third line we have used the doubling property of $\tomt^{X_0}$ (plus a subdivision and Harnack Chain argument if $\ell(Q_j)\approx \ell(P_0)$), and in the last line we have used
\cite[Proposition 6.12]{HM-I}, along with
\cite[Proposition 6.1]{HM-I} and \cite[Proposition 6.3]{HM-I} and the doubling property to ignore
the difference between $Q\setminus (\cup_\F Q_j)$ and $Q\cap\partial\Omega_{\F,Q_0}$. Also,
$\tilde{\Delta}_\star^{P_0}=\tilde{\Delta}_\star^{P_0}(\tilde{x}_{P_0}^\star,t_{P_0})$ is a surface ball such that
$\tilde{x}_{P_0}^\star\in \partial \tilde{\Omega}_{\F,P_0}$,
$t_{P_0}\approx \ell(P_0)$, $\dist(P_0, \tilde{\Delta}_\star^{P_0})\lesssim \ell(P_0)$, and the implicit constants may depend upon $K_0$, see \cite[Proposition 6.12]{HM-I}. Using Lemma \ref{Bourgainhm}, Harnack chain and \cite[Corollary 6.6]{HM-I} it is immediate to show that $\tomt^{X_0}(
\tilde{\Delta}_\star^{P_0})\ge C$ and therefore $\P_\F\nu(P_0)\ge C$. On the other hand, since $F=P_0\setminus (\cup_\F Q_j)$ we conclude that
$$
\frac{\P_\F \nu(F)}{\P_\F \nu(P_0)}
\lesssim
\P_\F \nu(F)
=
\tomt^{X_0}(F)
\lesssim
\gamma^2
.
$$
Thus Harnack chain, \cite[Corollary 6.6]{HM-I} and \cite[Lemma 6.15]{HM-I} imply
\begin{equation}\label{smallness-omega}
\omega^{X_{\hat{\Delta}_{P_0}}}(F)\le \frac{\omega^{X_{\hat{\Delta}_{P_0}}}(F)}{\omega^{X_{\hat{\Delta}_{P_0}}}(P_0)}
\approx
\frac{\omega^{X_0}(F)}{\omega^{X_0}(P_0)}
=
\frac{\P_\F \omega^{X_0}(F)}{\P_\F \omega^{X_0}(P_0)}
\lesssim
\left(\frac{\P_\F \nu(F)}{\P_\F \nu(P_0)}\right)^{\frac1{\theta}}
\lesssim
\gamma^{2/\theta},
\end{equation}
where we recall that $\Delta_{P_0}\subset P_0\subset \hat{\Delta}_{P_0}$.

We need the following auxiliary result, the proof is given below.

\begin{lemma}\label{lemma:self-impro-RHp}
Assume \eqref{eqn:scale-inva:converse-approx:wo-N}.
Then, given a surface ball $\Delta_0=B_0\cap\pom$ for every $\Delta=B\cap \pom$ with $B\subset B_0$ we have
\begin{equation}\label{RHP-rest}
\left(\fint_{\Delta} \left(k^{X_{\Delta_0}}\right)^{\tilde{p}} d\sigma\right)^{1/\tilde{p}} \leq C\,\fint_{\Delta}k^{X_{\Delta_0}}\,d\sigma.
\end{equation}
\end{lemma}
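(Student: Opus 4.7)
The plan is to convert the scale-invariant bound \eqref{eqn:scale-inva:converse-approx:wo-N}, which only controls $k^{X_\Delta}$ with its own corkscrew pole, into the analogous reverse H\"older estimate with the fixed pole $X_{\Delta_0}$. The main tool is the change-of-pole formula from Corollary \ref{cor2.poles}, which applies since $\Omega$ is really the approximating domain $\Omega_N$ and therefore satisfies the qualitative exterior Corkscrew condition. I will split according to whether $X_{\Delta_0}$ lies outside or inside $2\kappa_0 B$.

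In the ``far pole'' regime $X_{\Delta_0}\in \Omega\setminus 2\kappa_0 B$, applying Corollary \ref{cor2.poles} to an arbitrary subsurface ball $\Delta'=B'\cap\pom$ with $B'\subset B$ yields
$$\omega^{X_{\Delta_0}}(\Delta')\,\approx\, \omega^{X_{\Delta_0}}(\Delta)\,\omega^{X_\Delta}(\Delta').$$
Dividing by $\sigma(\Delta')$ and letting $r(B')\downarrow 0$, the Lebesgue differentiation theorem gives the pointwise comparability
$$k^{X_{\Delta_0}}(y)\,\approx\,\omega^{X_{\Delta_0}}(\Delta)\,k^{X_\Delta}(y),\qquad \sigma\text{-a.e.\ } y\in\Delta.$$
Inserting this identity on both sides of \eqref{RHP-rest} reduces the desired estimate to $\bigl(\fint_\Delta (k^{X_\Delta})^{\tilde p}\,d\sigma\bigr)^{1/\tilde p}\lesssim \sigma(\Delta)^{-1}$, which is an immediate rewriting of \eqref{eqn:scale-inva:converse-approx:wo-N}.

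In the ``close pole'' regime $X_{\Delta_0}\in 2\kappa_0 B$, the combination $\delta(X_{\Delta_0})\approx r(B_0)\ge r(B)$ with $|X_{\Delta_0}-x_B|\le 2\kappa_0 r(B)$ forces $r(B)\approx r(B_0)$, so $X_{\Delta_0}$ and $X_\Delta$ both sit at distance $\approx r(B)$ from $\pom$ and are separated by a distance $\lesssim r(B)$. The Harnack Chain condition then connects them with a chain of uniformly bounded length, so Harnack's inequality applied to the nonnegative harmonic function $X\mapsto \omega^X(A)$, $A\subset\pom$ Borel, yields $\omega^{X_{\Delta_0}}(A)\approx \omega^{X_\Delta}(A)$ and hence $k^{X_{\Delta_0}}\approx k^{X_\Delta}$ a.e.\ on $\Delta$. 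Then \eqref{eqn:scale-inva:converse-approx:wo-N} controls the left-hand side of \eqref{RHP-rest} by $C\sigma(\Delta)^{-1}$, while on the right-hand side Lemma \ref{Bourgainhm} gives $\omega^{X_\Delta}(\Delta)\ge 1/C$, so $\fint_\Delta k^{X_{\Delta_0}}\,d\sigma\approx \fint_\Delta k^{X_\Delta}\,d\sigma\gtrsim \sigma(\Delta)^{-1}$.

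I do not foresee any substantive obstacle; the only mildly delicate point is the case split itself, namely ensuring that the change-of-pole formula applies when $X_{\Delta_0}$ is far from $B$, and that the complementary case collapses to a bounded Harnack chain. All constants will be uniform in $N$, as required for the sequel, since they depend only on the ADR, $1$-sided NTA, and $\tilde p$, $C$ constants in \eqref{eqn:scale-inva:converse-approx:wo-N}, each of which is independent of $N$.
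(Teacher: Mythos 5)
Your proof is correct and follows the paper's own route: reduce to the change-of-pole estimate of Corollary \ref{cor2.poles} in the regime where $X_{\Delta_0}$ lies outside $2\kappa_0 B$ (so that $k^{X_{\Delta_0}}\approx\omega^{X_{\Delta_0}}(\Delta)\,k^{X_\Delta}$ a.e.\ on $\Delta$ and \eqref{RHP-rest} self-quotients to \eqref{eqn:scale-inva:converse-approx:wo-N}), and to Harnack plus Lemma \ref{Bourgainhm} in the complementary regime. The paper phrases the dichotomy as $r_B\ll r_{B_0}$ versus $r_B\approx r_{B_0}$, but as you observe the condition $X_{\Delta_0}\in 2\kappa_0 B$ forces $r_B\approx r_{B_0}$, so the two case splits are interchangeable.
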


Notice that this result says that $k^{X_{\Delta_0}}\in RH_{\tilde{p}}(\Delta_0)\subset A_\infty(\Delta_0)$ for all $\Delta_0$ and the constants are uniform in $\Delta_0$. Since $A_\infty(\Delta_0)$ defines an equivalence relationship
(on the set of doubling measures on $\Delta_0$)
we obtain that $\sigma\in A_\infty(\Delta_0,\omega^{X_{\Delta_0}})$ for all $\Delta_0$ and the constants are uniform in $\Delta_0$. Thus, there exist positive constants $C$ and $\vartheta$ such that
for every $\Delta_0=B_0\cap\pom$, $\Delta=B\cap \pom$ with $B\subset B_0$ and every Borel set $E$ we have
$$
\frac{\sigma(E)}{\sigma(\Delta)}
\le
C\,
\left(\frac{\omega^{X_{\Delta_0}}(E)}{\omega^{X_{\Delta_0}}(\Delta)} \right)^{\vartheta}
$$
We apply this with $\Delta=\Delta_0=\hat{\Delta}_{P_0}$ and with $E=F\subset P_0\subset \hat{\Delta}_{P_0}$. Then,
Lemma \ref{Bourgainhm} and \eqref{smallness-omega} imply
$$
\frac{\sigma(F)}{\sigma(P_0)}
\approx
\frac{\sigma(F)}{\sigma(\hat{\Delta}_{P_0})}
\lesssim
\left(\frac{\omega^{X_{\hat{\Delta}_{P_0}}}(F)}{\omega^{X_{\hat{\Delta}_{P_0}}}(\hat{\Delta}_{P_0})} \right)^{\vartheta}
\lesssim
\omega^{X_{\hat{\Delta}_{P_0}}}(F)^{\vartheta}
\lesssim
\gamma^{2\,\vartheta/\theta}.
$$
Let us recall that $\sigma(F_\lambda)\le 2\,\sigma(F)$ and therefore we have obtained
\begin{equation}\label{good-lambda:local:P0}
\sigma\big(\{x\in P_0: S_{Q_0} u(x)>\beta\,\lambda, \tilde{N}_{Q_0,*} u(x) \le \gamma\,\lambda\}\big)
\le
C\,\gamma^{2\,\vartheta/\theta}
\sigma(P_0),
\end{equation}
where all the constants are independent of $P_0$.

We recall that $P_0$ is an arbitrary cube in $\{P_j\}_j$, which is a family of Calder\'{o}n-Zygmund cubes associated with $E_\lambda$ for $\lambda>\fint_{Q_0} S_{Q_0} u\,d\sigma$.  In addition,
for $\sigma$-a.e. $x\in Q_0$ such that $S_{Q_0} u(x)>\beta\,\lambda$, we have $\lambda<\beta\,\lambda<S_{Q_0}u(x)\le \mathcal{M}_{Q_0} \left(S_{Q_0}u\right)(x)$
and therefore $x\in E_\lambda$. Using these observations and \eqref{good-lambda:local:P0} in each $P_j$,  we obtain
\begin{align*}
&\sigma\big(\{x\in Q_0: S_{Q_0} u(x)>\beta\,\lambda, \tilde{N}_{Q_0,*} u(x) \le \gamma\,\lambda\}\big)
\\
&
\qquad\qquad=
\sigma\big(\{x\in Q_0: S_{Q_0} u(x)>\beta\,\lambda, \tilde{N}_{Q_0,*} u(x) \le \gamma\,\lambda\}\cap E_\lambda\big)
\\
&
\qquad\qquad=
\sum_j\sigma\big(\{x\in P_j: S_{Q_0} u(x)>\beta\,\lambda, \tilde{N}_{Q_0,*} u(x) \le \gamma\,\lambda\}\big)
\\
&
\qquad\qquad\le
C\,\gamma^{2\,\vartheta/\theta}\sum_j \sigma(P_j)
\\
&
\qquad\qquad=
C\,\gamma^{2\,\vartheta/\theta}\sigma(E_\lambda)
\\
&
\qquad\qquad=
C\,\gamma^{2\,\vartheta/\theta} \sigma\big(\{x\in Q_0: \mathcal{M}_{Q_0}(S_{Q_0} u)(x)>\lambda\}\big).
\end{align*}
Thus, we have shown that for every $\lambda>\fint_{Q_0} S_{Q_0} u\,d\sigma$ we have
\begin{multline}\label{good-lambda:lambda-big}
\sigma\big(\{x\in Q_0: S_{Q_0} u(x)>\beta\,\lambda, \tilde{N}_{Q_0,*} u(x) \le \gamma\,\lambda\}\big)
\\
\le
C\,\gamma^{\theta'} \sigma\big(\{x\in Q_0: \mathcal{M}_{Q_0}(S_{Q_0} u)(x)>\lambda\}\big).
\end{multline}

Let us now consider the case $\lambda\le \fint_{Q_0} S_{Q_0} u\,d\sigma$. We are going to show that
\begin{equation}\label{good-lambda:lambda-small}
\sigma\big(\{x\in Q_0: S_{Q_0} u(x)>\beta\,\lambda, \tilde{N}_{Q_0,*} u(x) \le \gamma\,\lambda\}\big)
\le
C\,\gamma^{\theta'} \sigma(Q_0).
\end{equation}

We repeat the previous computations with $P_0=Q_0$ with the main difference that $P_0$ is no longer a maximal Calder\'{o}n-Zygmund cube. We take the same set $F_\lambda$ and we assume that $\sigma(F_\lambda)>0$, otherwise the desired estimate is trivial.  Before we used the maximality of $P_0$ to show that $\sigma(F_\lambda)<\sigma(P_0)$ for $\beta$ large enough and this was only used to obtain that $\sigma(F)<\sigma(P_0)$. Here we cannot do that and we proceed as follows.
By the inner regularity of $\sigma$ we can find a compact set $\tilde{F}$,  such that $\emptyset\neq \tilde{F}\subset F_\lambda\subset P_0$ and
$$
0<\frac12\,\sigma(F_\lambda)\le \sigma(\tilde{F})\le\sigma(F_\lambda)\le\sigma(P_0).
$$
If $\sigma(F_\lambda)<\sigma(P_0)$ or $\sigma(\tilde{F})<\sigma(F_\lambda)$ we set $F=\tilde{F}$ and we have $\sigma(F)<\sigma(P_0)$. Otherwise, i.e., if
$\sigma(\tilde{F})=\sigma(F_\lambda)=\sigma(P_0)$, we use the following lemma:

\begin{lemma}\label{lemma:dyadic-small-measure}
Let $\Omega$ have the ADR property with constant $C_1$, i.e.,
$$
C_1^{-1}\,r^n\le \sigma(\Delta(x,r))\le C_1\,r^n
$$
for all $x\in\pom$ and $0<r<\diam \pom$. Let $Q\in\dd$ and recall that there exists $\Delta_Q=\Delta(x_Q,r_Q)$ with $r_Q\approx \ell(Q)$ with the property that $\Delta_Q\subset Q\subset \Delta(x_Q,C_2\,r_Q)$ for some uniform constant $C_2\ge 1$. Take $\tau_0=(2\,C_1^2)^{-1/n}$ and set $\Delta_Q^{\tau_0}=\Delta(x_Q,\tau_0\,r_Q)$. Then,
$$
\overline{\Delta_Q^{\tau_0}}\subset \Delta_Q\subset Q
\qquad\mbox{and}\qquad
(2\,C_1^4\,C_2^n)^{-1}\sigma(Q)\le \sigma\big(\overline{\Delta_Q^{\tau_0}}\big)\le \frac34\,\sigma(Q).
$$
\end{lemma}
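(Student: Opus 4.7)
The proof is a direct bookkeeping computation from the two ADR bounds, calibrated by the choice of $\tau_0$. First, since $C_1\ge 1$ one has $\tau_0=(2C_1^2)^{-1/n}<1$, so the closed Euclidean ball $\overline{B}(x_Q,\tau_0 r_Q)$ lies strictly inside the open ball $B(x_Q,r_Q)$. Intersecting with the closed set $\pom$ gives
$$
\overline{\Delta_Q^{\tau_0}}\,\subset\,\overline{B}(x_Q,\tau_0 r_Q)\cap\pom\,\subset\, B(x_Q,r_Q)\cap\pom = \Delta_Q \,\subset\, Q,
$$
which yields the first conclusion of the lemma.

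For the upper bound on $\sigma(\overline{\Delta_Q^{\tau_0}})$, I would apply the ADR upper bound, extended to closed surface balls by the elementary observation $\overline{B}(x_Q,\tau_0 r_Q)\cap\pom\subset \Delta(x_Q,\tau_0 r_Q+\eps)$ for every $\eps>0$ together with continuity of $\sigma$ from above (so that closed surface balls satisfy the same upper bound, with the same constant $C_1$, as open ones). This gives
$$
\sigma\bigl(\overline{\Delta_Q^{\tau_0}}\bigr)\,\le\, C_1(\tau_0 r_Q)^n \,=\, \frac{r_Q^n}{2C_1},
$$
by the choice of $\tau_0$. Combining with the ADR lower bound applied through $\Delta_Q\subset Q$,
$$
\sigma(Q)\,\ge\, \sigma(\Delta_Q) \,\ge\, C_1^{-1} r_Q^n,
$$
we conclude $\sigma(\overline{\Delta_Q^{\tau_0}})\le \tfrac12\sigma(Q)\le \tfrac34\sigma(Q)$.

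For the lower bound, I would estimate from below using the open surface ball $\Delta_Q^{\tau_0}\subset \overline{\Delta_Q^{\tau_0}}$:
$$
\sigma\bigl(\overline{\Delta_Q^{\tau_0}}\bigr)\,\ge\,\sigma\bigl(\Delta_Q^{\tau_0}\bigr)\,\ge\, C_1^{-1}(\tau_0 r_Q)^n \,=\,\frac{r_Q^n}{2C_1^3},
$$
and use the containment $Q\subset \Delta(x_Q,C_2 r_Q)$ with the ADR upper bound to get $\sigma(Q)\le C_1 C_2^n r_Q^n$. Dividing produces $\sigma(\overline{\Delta_Q^{\tau_0}})/\sigma(Q)\ge (2C_1^4 C_2^n)^{-1}$, as claimed.

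There is no real obstacle: the entire lemma is encoded in the choice $\tau_0=(2C_1^2)^{-1/n}$, engineered so that the ADR upper bound $C_1\tau_0^n=(2C_1)^{-1}$ is exactly half of the ADR lower bound $C_1^{-1}$ on $\sigma(\Delta_Q)$. The only minor point to verify is the ADR upper bound on a closed surface ball, which follows immediately from the stated open-ball version by continuity of measure, and the tacit assumption that the relevant radii $\tau_0 r_Q$ and $C_2 r_Q$ are admissible scales (which can always be arranged, since for the largest cubes one may simply replace $\Delta(x_Q,C_2 r_Q)$ by $\pom$ and use that $\sigma(\pom)\lesssim (\diam\pom)^n$).
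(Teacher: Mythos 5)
Your proposal is correct and follows essentially the same route as the paper: both are direct ADR computations calibrated by the choice of $\tau_0$, with the lower bound argued identically. The only small deviation is in bounding $\sigma\bigl(\overline{\Delta_Q^{\tau_0}}\bigr)$ from above, where you take an $\eps\to0$ limit (yielding the sharper $\tfrac12\sigma(Q)$), whereas the paper instead includes $\overline{\Delta_Q^{\tau_0}}$ in the open surface ball of radius $(3/2)^{1/n}\tau_0 r_Q$, which is exactly how it arrives at the stated constant $\tfrac34$.
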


Assuming this momentarily (the proof is given below), we apply this result to $P_0$. Set $F=\tilde{F}\cap \overline{\Delta_{P_0}^{\tau_0}}\subset \tilde{F}$ which is a compact set.
As we have $\tilde{F}\subset P_0$ and $\sigma(\tilde{F})=\sigma(F_\lambda)=\sigma(P_0)$ we obtain
\begin{multline*}
(2\,C_1^4\,C_2^n)^{-1}\sigma(F_\lambda)
=
(2\,C_1^4\,C_2^n)^{-1}\sigma(P_0)
\le
\sigma\big(\overline{\Delta_{P_0}^{\tau_0}}\big)
=
\sigma\big(\overline{\Delta_{P_0}^{\tau_0}}\cap \tilde{F}\big)
+
\sigma\big(\overline{\Delta_{P_0}^{\tau_0}}\setminus \tilde{F}\big)
\\
\le
\sigma(F)+\sigma(P_0\setminus \tilde{F})
=
\sigma(F)
\le
\sigma\big(\overline{\Delta_{P_0}^{\tau_0}}\big)
\le
\frac34\,\sigma(P_0)
=
\frac34\,\sigma(F_\lambda).
\end{multline*}
Thus, we have found a compact set $F$ such that $\emptyset\neq F\subset F_\lambda\subset P_0$ and
$$
0<(2\,C_1^4\,C_2^n)^{-1}\sigma(F_\lambda)\le\sigma(F)<\sigma(F_\lambda)=\sigma(P_0).
$$
This allows us to run the stopping time argument and find the family $\F$ as before. We then
continue the argument and notice that in \eqref{local-Sf}, we used the maximality of $P_0$.
Here, the analogous estimate is trivial:   since $P_0=Q_0$, it follows that
for any $x\in F\subset F_\lambda$, we have
$S_{P_0}u(x)=S_{Q_0}u(x)>\beta\,\lambda>(\beta-1)\,\lambda$ and
therefore $ F\subset \{x\in P_0: S_{P_0} u(x)>(\beta-1)\,\lambda\}$.
From this point the proof continues without change (except for the fact that we have $\sigma(F_\lambda)\lesssim\sigma(F)$ in place of $\sigma(F_\lambda)\le 2\,\sigma(F)$), so that
\eqref{good-lambda:local:P0} holds, and in the present case
this is our desired estimate \eqref{good-lambda:lambda-small}.

Let $1<q<\infty$ and write $a_{Q_0}=\fint_{Q_0} S_{Q_0} u\,d\sigma<\infty$. Then
\begin{align*}
\|S_{Q_0} u\|_{L^q(Q_0)}^q
&=
\beta^q\,\int_{0}^\infty q\,\lambda^q\sigma\big(\{x\in Q_0: S_{Q_0} u(x)>\beta\,\lambda\}\big)\,\frac{d\lambda}{\lambda}
\\
&\le
\beta^q\,\int_{0}^\infty q\,\lambda^q\sigma\big(\{x\in Q_0: S_{Q_0} u(x)>\beta\,\lambda, \tilde{N}_{Q_0,*} u(x) \le \gamma\,\lambda\}\big)\,\frac{d\lambda}{\lambda}
\\
&\qquad+
(\beta/\gamma)^q\,\|\tilde{N}_{Q_0,*} u\|_{L^q(Q_0)}^q
\\
&
=\beta^q\,\int_0^{a_{Q_0}}\cdots\frac{d\lambda}{\lambda}+\beta^q\,\int_{a_{Q_0}}^\infty\cdots\frac{d\lambda}{\lambda} +(\beta/\gamma)^q\,\|\tilde{N}_{Q_0,*} u\|_{L^q(Q_0)}^q
\\
&
=I+II+(\beta/\gamma)^q\,\|\tilde{N}_{Q_0,*} u\|_{L^q(Q_0)}^q.
\end{align*}
For $I$ we use \eqref{good-lambda:lambda-small} and Jensen's inequality:
$$
I
\le
C\,\gamma^{\theta'} \beta^q\,\sigma(Q_0)\,\int_0^{a_{Q_0}} q\,\lambda^q\,\frac{d\lambda}{\lambda}
=
C\,\gamma^{\theta'} \beta^q\,\sigma(Q_0)\,a_{Q_0}^q
\le
C\,\gamma^{\theta'} \,\beta^q\,\,\|S_{Q_0} u\|_{L^q(Q_0)}^q.
$$
For $II$ we use \eqref{good-lambda:lambda-big} and the fact that $\mathcal{M}_{Q_0}$ is bounded on $L^q(Q_0)$ (notice that $\mathcal{M}_{Q_0}$ is the localized dyadic Hardy-Littlewood maximal function and we can insert the characteristic function of $Q_0$ in the argument for free):
\begin{multline*}
II
\le
C\,\gamma^{\theta'}\, \beta^q\,\int_{a_{Q_0}}^{\infty} q\,\lambda^q\,\sigma\big(\{x\in Q_0: \mathcal{M}_{Q_0}(S_{Q_0} u)(x)>\lambda\}\big)\,\frac{d\lambda}{\lambda}
\\=
C\,\gamma^{\theta'}\, \beta^q\,\|\mathcal{M}_{Q_0}(S_{Q_0} u)\|_{L^q(Q_0)}^q
\le
C\,\gamma^{\theta'}\, \beta^q\,\|S_{Q_0} u\|_{L^q(Q_0)}^q.
\end{multline*}
Gathering the obtained estimates we conclude that
$$
\|S_{Q_0} u\|_{L^q(Q_0)}^q
\le
C\,\gamma^{\theta'}\, \beta^q\,\|S_{Q_0} u\|_{L^q(Q_0)}^q
+
(\beta/\gamma)^q\,\|\tilde{N}_{Q_0,*} u\|_{L^q(Q_0)}^q.
$$
Choosing  $\gamma$  small enough so that $C\,\gamma^{\theta'}\, \beta^q<\frac12$ the first term in the right hand side can be hidden in the left hand side (since it is finite by assumption) and we conclude as desired \eqref{Sf-NT}.
\end{proof}

\begin{proof}[Proof of Lemma \ref{lemma:self-impro-RHp}]
Notice that in proving \eqref{RHP-rest}, we may suppose that the balls $B$ and $B_0$ have respective radii $r_B\ll r_{B_0}$;  otherwise, if $r_B\approx r_{B_0}$, then \eqref{RHP-rest} reduces
immediately to \eqref{eqn:scale-inva:converse-approx:wo-N}.
We now may proceed as in the first part of the proof of Proposition \ref{prop:RHp-scale-N} in order to
use Corollary \ref{cor2.poles}: we take $\Delta=B\cap\Omega$ and
for every  $Y\in\Omega\setminus 2\kappa_0 B$
we have
$$
\fint_{\Delta'} k^Y\,d\sigma
\approx
\omega^Y(\Delta)\,\fint_{\Delta'} k^{X_{\Delta}}\,d\sigma
\qquad B'\subset B.
$$
By the Harnack chain condition this estimate holds for $Y=X_{\Delta_0}$.
Then for $\sigma$-a.e. $x\in\Delta$ we take $B'=B(x,r)$ and let $r\to 0$ to obtain $k^{X_{\Delta_0}}(x)\approx \omega^{X_{\Delta_0}}(\Delta)\,k^{X_{\Delta}}(x)$. Consequently, by \eqref{eqn:scale-inva:converse-approx:wo-N} we have
$$
\left(\fint_{\Delta} \left(k^{X_{\Delta_0}}\right)^{\tilde{p}}\,d\sigma\right)^{\frac1{\tilde{p}}}
\lesssim
\omega^{X_{\Delta_0}}(\Delta)
\,
\left(\fint_{\Delta} \left(k^{X_{\Delta}}\right)^{\tilde{p}}\,d\sigma\right)^{\frac1{\tilde{p}}}
\lesssim
\frac{\omega^{X_{\Delta_0}}(\Delta)}{\sigma(\Delta)}
=
\fint_{\Delta} k^{X_{\Delta_0}}\,d\sigma
.
$$
\end{proof}

\begin{proof}[Proof of Lemma \ref{lemma:dyadic-small-measure}]
The proof is almost trivial. What $\overline{\Delta_Q^{\tau_0}}\subset \Delta_Q$ follows at once from the fact that $\tau_0<1$. On the other hand, let us notice that $\overline{\Delta_Q^{\tau_0}}\subset \Delta(x_Q, (3/2)^{1/n} \,\tau_0\,r_Q)$ and therefore the ADR property gives
$$
\sigma\big(\overline{\Delta_Q^{\tau_0}}\big)
\le
\sigma\big(\Delta(x_Q, (3/2)^{1/n} \,\tau_0\,r_Q))
\le
C_1\,\frac{3}{2}\,\tau_0^n\,r_Q^n
=
\frac34\,C_1^{-1}\,r_Q^n
\le
\frac34\,\sigma(\Delta_Q)
$$
and
$$
\sigma(Q)
\le
\sigma(\Delta(x_Q,C_2\,r_Q))
\le C_1\,C_2^n\,r^n
\le
C_1^2\,C_2^n\,\tau_0^{-n}
\sigma(\Delta_Q^{\tau_0})
\le
2\,C_1^4\,C_2^n\,\sigma\big(\overline{\Delta_Q^{\tau_0}}\big).
$$
\end{proof}

\subsubsection{Good-$\lambda$ inequality for truncated cones}
In order to apply Proposition \ref{prop:good-lambda} we need to know a priori that $\|S_{Q_0} u\|_{L^q(Q_0)}<\infty$ (qualitatively), which can be verified as follows.  Since $\Omega_N$ satisfies the exterior corkscrew condition at small scales, we may invoke \cite{JK}  to control the square function by the non-tangential maximal operator with constants that may depend on $N$ (we recall that at this stage $\Omega=\Omega_N$), but this gives, in particular,  qualitative finiteness of the square function.
The results of \cite{JK} apply to square functions and non-tangential maximal functions defined
via the classical cones $\widetilde{\Gamma}_\alpha (x):=
\{Y \in\Omega : |Y-x|<(1+\alpha)\,\delta(Y)\}$, but our dyadic cones may be compared to the classical cones by varying the aperture parameter $\alpha$.  Thus, we may infer also that the our square function has
(qualitatively) finite $L^p$ norm, given finiteness of our non-tangential maximal function, so
we may then
apply Proposition \ref{prop:good-lambda} to obtain that
\eqref{Sf-NT} holds with uniform bounds.

There is, however, a different approach that consists of working with cones
that are ``truncated'' so that they stay away from the boundary. This in turns implies
that the truncated square function is bounded and therefore that the corresponding
left hand side is finite. Passing to the limit we conclude that the a priori finiteness
hypothesis can be removed from Proposition  \ref{prop:good-lambda}. We present
this argument for the sake of self-containment, and because the argument is of independent interest.

Before stating the precise result we introduce some notation. Given $Q_0$, we take $k\ge k(Q_0)+2 $ (recall that $k(Q_0)$ is defined in such a way $\ell(Q_0)=2^{-k(Q_0)}$) a large enough integer  (eventually, $k\uparrow\infty$). We define $\Gamma_{Q_0}^{k}(x)$  to be the truncated cone where the cubes in the union satisfy  additionally  $\ell(Q)\ge 2^{-k}$. The corresponding truncated square function is written as $S_{Q_0}^{k}$.

\begin{proposition}\label{prop:good-lambda-truncated}
Given $Q_0\in\dd$, let $u$ be harmonic in $2B^{\kappa_0}_{Q_0}\cap\Omega$. Then, for every $1<q<\infty$ and for every $k\ge k(Q_0)+2$ we have
\begin{equation}\label{Sf-NT-trunct}
\|S_{Q_0}^k u\|_{L^q(Q_0)}
\le
C\,\|\tilde{N}_{Q_0,*} u\|_{L^q(Q_0)},
\end{equation}
where the constant $C$ is independent of $k$ and consequently \eqref{Sf-NT} holds whether or not the left hand side is finite.
\end{proposition}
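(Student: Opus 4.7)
The plan is to repeat the argument of Proposition \ref{prop:good-lambda} verbatim, but with $S_{Q_0}^{k}$ in place of $S_{Q_0}$, and check that the a priori finiteness is automatic and that every constant produced along the way is independent of $k$. First, for fixed $k$ the quantity $\|S_{Q_0}^{k} u\|_{L^q(Q_0)}$ is trivially finite: on the truncated cone one has $\delta(Y)\gtrsim 2^{-k}$, so by interior gradient estimates for harmonic functions applied on $2B^{\kappa_0}_{Q_0}\cap\Omega$, $|\nabla u(Y)|^2\,\delta(Y)^{1-n}$ is bounded by a constant depending on $k$ and on $\sup_{B^{\kappa_0}_{Q_0}\cap\Omega}|u|$, while $\Gamma_{Q_0}^{k}(x)$ has uniformly bounded volume; hence $S_{Q_0}^{k}u\in L^\infty(Q_0)$.

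With this qualitative finiteness in hand, I would re-run the Calder\'on--Zygmund/stopping-time machinery of the proof of Proposition \ref{prop:good-lambda}, replacing $S_{Q_0}u$ by $S_{Q_0}^{k}u$ and $S_{P_0}u$ by its truncated analogue $S_{P_0}^{k}u$. The subadditivity inequality \eqref{local-Sf} is insensitive to the truncation because the splitting $\Gamma_{Q_0}(x)\subset \Gamma_{P_0}(x)\cup\Gamma_{Q_0}(z_0)$ restricts to $\Gamma_{Q_0}^{k}(x)\subset\Gamma_{P_0}^{k}(x)\cup\Gamma_{Q_0}^{k}(z_0)$. Similarly, the containment \eqref{cones-sawtooth} persists after truncation (one just unions over a subfamily of $\dd_{\F,P_0}$), so the integration-by-parts argument culminating in \eqref{eq2.33} runs unchanged, with the integration performed on the truncated region $\tilde{\Omega}_{\F,P_0}^{k}$ (the subset of $\tilde{\Omega}_{\F,P_0}$ consisting of fattened Whitney cubes of size $\geq 2^{-k}$); this subregion is contained in $\tilde{\Omega}_{\F,P_0}$, so the same Green function/harmonic measure estimates apply, and the same non-tangential bound $|u(Y)|\le\gamma\lambda$ for $Y$ in the region still holds pointwise (no need to invoke non-tangential limits, since the truncated region is strictly interior). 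The ensuing good-$\lambda$ estimate, the passage through $A_\infty$ of $\sigma$ with respect to $\omega^{X_{\hat\Delta_{P_0}}}$ via Lemma \ref{lemma:self-impro-RHp}, and the dichotomy $\lambda>\fint_{Q_0}S_{Q_0}^{k}u\,d\sigma$ versus $\lambda\leq\fint_{Q_0}S_{Q_0}^{k}u\,d\sigma$ all produce constants that depend only on the allowable geometric parameters, not on $k$.

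Plugging the good-$\lambda$ inequality for $S_{Q_0}^{k}u$ into the standard layer-cake/distribution-function argument at the end of the proof of Proposition \ref{prop:good-lambda}, one concludes
\begin{equation*}
\|S_{Q_0}^{k} u\|_{L^q(Q_0)}\le C\,\|\tilde N_{Q_0,*} u\|_{L^q(Q_0)},
\end{equation*}
with $C$ independent of $k$; here the a priori finiteness needed to hide the term is supplied automatically by the truncation. Finally, since $\Gamma_{Q_0}^{k}(x)\nearrow \Gamma_{Q_0}(x)$ as $k\to\infty$, the monotone convergence theorem yields $S_{Q_0}^{k}u(x)\nearrow S_{Q_0}u(x)$ pointwise, so Fatou's lemma gives the untruncated bound \eqref{Sf-NT} with the same constant, regardless of whether its left-hand side is a priori finite.

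The main technical point to verify carefully is that every geometric construction in the original proof (the sawtooth $\tilde{\Omega}_{\F,P_0}$, the NTA character, the harmonic-measure doubling and change-of-pole estimates, the Green-function identity, and the $A_\infty$ relation between $\sigma$ and harmonic measure) remains intact when one restricts attention to truncated cones; no constant should pick up a dependence on the truncation level $k$. This is ensured by the fact that the truncation only shrinks the domain of integration, and all the sawtooth-based estimates are monotone in the region, so they can be applied on the full sawtooth and then restricted.
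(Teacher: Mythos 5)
Your overall strategy matches the paper's: establish that $S_{Q_0}^k u\in L^\infty(Q_0)$ because the truncated cones stay at distance $\gtrsim 2^{-k}$ from $\pom$, re-run the good-$\lambda$ machinery, use the a priori finiteness to hide the square-function term, and let $k\to\infty$ by monotone convergence. However, there is a genuine gap in the way you propose to handle the integration-by-parts step.

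You claim one should perform the Green-function integration by parts over the truncated region $\tilde{\Omega}_{\F,P_0}^{k}$, and that this makes the non-tangential limit step (via \cite[Theorem 6.4]{JK}) unnecessary because the truncated region is strictly interior. This does not work. The identity underlying \eqref{eq2.33},
\begin{equation*}
\iint_{\tilde{\Omega}_{\F,P_0}}\mathcal{L}(u^2)(Y)\,\tilde{G}_\star(X_0,Y)\,dY
= -\,u(X_0)^2 + \int_{\partial\tilde{\Omega}_{\F,P_0}}u(y)^2\,d\tomt^{X_0}(y),
\end{equation*}
is the Green representation for the \emph{full} sawtooth, and it relies on the fact that $\tilde{G}_\star(X_0,\cdot)$ vanishes on $\partial\tilde{\Omega}_{\F,P_0}$. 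If you instead integrate $\mathcal{L}(u^2)\,\tilde{G}_\star$ over the proper subdomain $\tilde{\Omega}_{\F,P_0}^{k}$, the resulting boundary integral lives on $\partial\tilde{\Omega}_{\F,P_0}^{k}$, where $\tilde{G}_\star$ and $\nabla\tilde{G}_\star$ do \emph{not} vanish, and you pick up terms you have no estimate for. If instead you try to use the Green function of the truncated domain, then in place of $\tomt(F)$ you would have to use the harmonic measure of $\tilde{\Omega}_{\F,P_0}^{k}$ evaluated on $F$; but $F\subset\pom$ is disjoint from $\partial\tilde{\Omega}_{\F,P_0}^{k}$ (which sits at distance $\gtrsim 2^{-k}$ from $\pom$), so the whole left-hand side of the chain \eqref{sf-L2} becomes meaningless. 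Either way the argument breaks down.

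The paper's fix is much simpler and you should adopt it: after the Calder\'on--Zygmund decomposition (done with the truncated maximal function $\mathcal{M}_{Q_0}^k$, noting that $S_{Q_0}^k u$ is constant on cubes of side $2^{-k}$), one simply uses the pointwise inequality $S_{P_0}^k u(x)\le S_{P_0}u(x)$ to obtain \eqref{sf-L2} for the \emph{untruncated} square function, and from that point the proof of Proposition~\ref{prop:good-lambda} runs verbatim --- including the non-tangential limit step via \cite{JK}, which is not avoidable but is also harmless. The truncation is used only at the two places you correctly identified: to guarantee $S_{Q_0}^k u\in L^\infty$ at the outset, and to hide $\|S_{Q_0}^k u\|_{L^q(Q_0)}$ at the end. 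Your assertion that the truncation removes the need for non-tangential limits is incorrect, and the attempted truncated Green identity that it rests on does not hold.
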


\begin{proof}

It is straightforward to see that the truncated cones are away from the boundary: the cubes that define the cones have side length at least $2^{-k}$ and thus the corresponding Whitney boxes have side length at lest $C\,2^{-k}$, thus $\dist(\Gamma_{Q_0}^k,\partial\Omega)\gtrsim 2^{-k}$. Note that $u$ is harmonic in $2\,B^{\kappa_0}_{Q_0}\cap\Omega$ and therefore smooth in $\frac32\,B^{\kappa_0}_{Q_0}\cap\{Y\in\Omega:\delta(Y)\gtrsim 2^{-k}\}$. Thus,
\eqref{condition:kappa0-cones-sawtooth} implies $|\nabla u|\le C_{Q_0,k,u}$ in $\Gamma_{Q_0}^{k}(x)$, $x\in Q_0$,
and consequently $S_{Q_0}^k u\in L^\infty(Q_0)$. The bound may depend of $k$, $Q_0$ but we will use this qualitatively and a not quantitatively. Note also, that if $Q\in\dd_{Q_0}$ with $\ell(Q)=2^{-k}$ then $\Gamma_{Q_0}^k(x)$ does not depend on $x\in Q$ (i.e. $\Gamma_{Q_0}^k(x)=\Gamma_{Q_0}^k(x')$ for every $x$, $x'\in Q$) and thus $S_{Q_0}^k u$ is constant on $Q$.

Let us write $\mathcal{M}_{Q_0}^k$ for the truncated dyadic maximal function where the cubes in the sup have side length at least $2^{-k}$. Associated to $S_{Q_0}^k u$,  and for each $\lambda>\fint_{Q_0} S_{Q_0}^k u\,d\sigma$ we define the corresponding $E_\lambda$; we clearly have $E_\lambda=\cup_j P_j$ with $P_j$ being maximal cubes as before satisfying further $\ell(P_j)\ge 2^{-k}$ (note that as we have observed that $S_{Q_0}^k u$ is constant on cubes of size $2^{-k}$ we could have taken $\mathcal{M}_{Q_0}$ obtaining the same family $\{P_j\}$ with the same properties.) We also observe that $S_{Q_0}^k u \le \mathcal{M}_{Q_0}^k(S_{Q_0}^k u)$ a.e.~in $Q_0$.
Fix one of these cubes $P_0$  and define $F_\lambda$ with the truncated square function $S_{Q_0}^k$ replacing $S_{Q_0}$ ---we do not truncate the non-tangential maximal operator.  Then we proceed as  before, assume that $\sigma(F_\lambda)>0$ and find the corresponding set $F$, and a family $\F$ with the same properties as before. We can easily see that $\Gamma_{Q_0}^k(x)\subset \Gamma_{P_0}^k(x)\cup \Gamma_{Q_0}^k(z)$ for every
$x\in F$ and $z\in\tilde{P}_0$. Then, since $P_0$ is a Calder\'{o}n-Zygmund cube in $E_\lambda$, we can pick $z_0\in \tilde{P}_0$ with $S_{Q_0}^k u(z)\le \lambda$ (indeed we know that this happens in a set of positive measure in $\tilde{P}_0$.) Then for any
$x\in F\subset F_\lambda \,,$  we have
\begin{align}\label{local-Sf-k}
\beta\,\lambda
&
<
S_{Q_0}^k u(x)
=
\left(\iint_{\Gamma_{Q_0}^k(x)} |\nabla u(Y)|^2\,\frac{dY}{\delta(Y)^{n-1}}\right)^{\frac12}
\\
&
\le
\left(\iint_{\Gamma_{P_0}^k(x)} |\nabla u(Y)|^2\,\frac{dY}{\delta(Y)^{n-1}}\right)^{\frac12}
+
\left(\iint_{\Gamma_{Q_0}^k(z_0)} |\nabla u(Y)|^2\,\frac{dY}{\delta(Y)^{n-1}}\right)^{\frac12}
\nonumber
\\
&=
S_{P_0}^k u(x)+ S_{Q_0}^k u(z_0)
\nonumber
\\
&\le
S_{P_0}^k u(x)+\lambda \nonumber
\end{align}
and therefore $ F\subset \{x\in P_0: S_{P_0}^k u(x)>(\beta-1)\,\lambda\}$.

Next, using that $S_{P_0}^k u(x)\le S_{P_0} u(x)$ we easily obtain \eqref{sf-L2} and from this point the argument goes through without change. Thus, we obtain the following analog of \eqref{good-lambda:local:P0}
\begin{equation}\label{good-lambda:local:P0:truncated}
\sigma(\{x\in P_0: S_{Q_0}^k u(x)>\beta\,\lambda, \tilde{N}_{Q_0,*} u(x) \le \gamma\,\lambda\})
\le
C\,\gamma^{2\,\vartheta/\theta}
\sigma(P_0),
\end{equation}
where all the constants are independent of $P_0$ and $k$.

Let us consider the case $\lambda\le \fint_{Q_0} S_{Q_0}^k u\,d\sigma$. The same argument as before works in this case and we easily obtain
\begin{equation}\label{good-lambda:lambda-small:truncated}
\sigma(\{x\in Q_0: S_{Q_0}^k u(x)>\beta\,\lambda, \tilde{N}_{Q_0,*} u(x) \le \gamma\,\lambda\})
\le
C\,\gamma^{\theta'} \sigma(Q_0).
\end{equation}

Gathering \eqref{good-lambda:local:P0:truncated} and \eqref{good-lambda:lambda-small:truncated} and repeating the computations above we conclude that
$$
\|S_{Q_0}^k u\|_{L^q(Q_0)}^q
\le
C\,\gamma^{\theta'}\, \beta^q\,\|S_{Q_0}^k u\|_{L^q(Q_0)}^q
+
(\beta/\gamma)^q\,\|\tilde{N}_{Q_0,*} u\|_{L^q(Q_0)}^q,
$$
where all the constants are independent of $k$. As observed, the fact that we are working with truncated square functions gives us that $S_{Q_0}^k u\in L^\infty(Q_0)$ (we use this in a qualitative way but not quantitatively). Thus,
choosing  $\gamma$  small enough so that $C\,\gamma^{\theta'}\, \beta^q<\frac12$ the first term in the right hand side can be hidden in the left hand side. Hence, we obtain as desired \eqref{Sf-NT-trunct}
$$
\|S_{Q_0}^k u\|_{L^q(Q_0)}^q
\le
C\, \|\tilde{N}_{Q_0,*} u\|_{L^q(Q_0)}^q
$$
where $C$ is independent of $k$. Letting $k\uparrow \infty$, the monotone convergence theorem gives \eqref{Sf-NT} without assuming that the left hand side is finite. Thus we have proved a version of Proposition \ref{prop:good-lambda} where there is no need to assume that the left hand side is finite.
\end{proof}

\subsection{Step 4: Proof of \eqref{desired-S-Green}}\label{section:proof-S}

In order to obtain \eqref{desired-S-Green} we shall use Proposition \ref{prop:good-lambda}, applied to $u =\nabla G$,
with our exponent $q=\tilde{p}$. Thus, we need to study the non-tangential maximal function of $u$,
and it suffices to do this for each component of $u$, i.e., for any given partial derivative of $G$.   More precisely, we set
$u(Y)=\partial_{Y_j} G(Y,\hat{X}_{Q})$,  where $\hat{X}_{Q}$ is the corkscrew point associated to $\hat{\Delta}_{Q}$  defined in Section \ref{section-step2-Tb}. As mentioned there,  $\hat{X}_{Q}\notin 6\,\tilde{B}_Q$ with $\tilde{B}_Q=B(x_Q, \kappa_1\,\ell(Q))$ and $\kappa_1>\kappa_0$. In particular  $\hat{X}_Q\notin 6\,B^{\kappa_0}_Q$,
so that $u$ is harmonic in $2B^{\kappa_0}_Q\cap\Omega$.

\begin{proposition}\label{prop:NT-Green}
We have
$$
\|\tilde{N}_{Q,*} u\|_{L^{q}(Q)}^q
\le
C\,\sigma(Q)^{1-q}\,.
$$
\end{proposition}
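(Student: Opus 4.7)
The plan is to establish, for each $x\in Q$, a pointwise bound
$$
\tilde{N}_{Q,*}u(x)\,\lesssim\,\mathcal{M}\!\left(k^{\hat{X}_Q}\,\chi_{5\hat{\Delta}_Q}\right)\!(x),
$$
where $\mathcal{M}$ is the Hardy--Littlewood maximal operator on $\pom$; then the proposition will follow by the $L^q$-boundedness of $\mathcal{M}$ (recall $q=\tilde{p}>1$) and an application of the hypothesis \eqref{eqn:scale-inva:converse-approx:wo-N} to the surface ball $5\hat{\Delta}_Q$.  Note first that, by the choice of $\hat{X}_Q$ as a Corkscrew point for $\hat{\Delta}_Q=\kappa_2\tilde{B}_Q\cap\pom$ with $\kappa_2$ chosen so that $\hat{X}_Q\notin 6\tilde{B}_Q$, and since $\tilde{\Gamma}_Q(x)\subset \overline{\tilde{T}_{Q_0}}\subset B^{\kappa_0}_Q\cap\overline{\Omega}\subset \tilde{B}_Q$, one has $|Y-\hat{X}_Q|\approx \ell(Q)$ for every $Y\in \tilde{\Gamma}_Q(x)$; in particular $G(\cdot,\hat{X}_Q)$ is harmonic on $B(Y,\delta(Y)/2)$.

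The pointwise bound is obtained in two steps.  First, the standard interior gradient estimate for harmonic functions gives
$$
|\nabla_Y G(Y,\hat{X}_Q)|\,\lesssim\,\delta(Y)^{-1}\sup_{Z\in B(Y,\delta(Y)/4)}G(Z,\hat{X}_Q).
$$
Next, for any such $Z$, letting $\hat{z}\in\pom$ satisfy $|Z-\hat{z}|=\delta(Z)\approx \delta(Y)$, $Z$ may be viewed (up to dimensional constants) as a Corkscrew point for $\Delta_Z:=\Delta(\hat{z},c\,\delta(Z))$, while $\hat{X}_Q$ lies well outside $B(\hat{z},2c\,\delta(Z))$ since $|\hat{X}_Q-\hat{z}|\gtrsim \ell(Q)\gg \delta(Z)$.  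Because $\Omega=\Omega_N$ satisfies the qualitative exterior Corkscrew condition, Lemma \ref{lemma2.cfms} applies and yields
$$
G(Z,\hat{X}_Q)\,\lesssim\, \delta(Z)^{1-n}\,\omega^{\hat{X}_Q}(\Delta_Z).
$$
Combining these two estimates, and using $d\omega^{\hat{X}_Q}=k^{\hat{X}_Q}\,d\sigma$ together with the ADR property, one obtains
$$
|\nabla_Y G(Y,\hat{X}_Q)|\,\lesssim\, \fint_{\Delta(\hat{y},C\delta(Y))}k^{\hat{X}_Q}\,d\sigma,
$$
where $\hat{y}\in\pom$ realizes $\delta(Y)$.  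Since $Y\in \tilde{\Gamma}_Q(x)$ forces $|x-\hat{y}|\lesssim \delta(Y)\lesssim \ell(Q)$, the surface ball $\Delta(\hat{y},C\delta(Y))$ is contained in $\Delta(x,C'\delta(Y))\subset 5\hat{\Delta}_Q$, so taking the supremum in $Y$ and in $j$ yields the claimed pointwise bound.

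With the pointwise estimate in hand, the proof concludes quickly:
$$
\int_Q\big(\tilde{N}_{Q,*}u\big)^q\,d\sigma\,\lesssim\,\int_{\pom}\Big|\mathcal{M}\!\left(k^{\hat{X}_Q}\,\chi_{5\hat{\Delta}_Q}\right)\Big|^q\,d\sigma\,\lesssim\,\int_{5\hat{\Delta}_Q}\big(k^{\hat{X}_Q}\big)^q\,d\sigma,
$$
using the $L^q(\pom,\sigma)$ boundedness of $\mathcal{M}$.  The Harnack Chain condition and Harnack inequality give $k^{\hat{X}_Q}\approx k^{X_{5\hat{\Delta}_Q}}$ $\sigma$-a.e.\ on $5\hat{\Delta}_Q$ (since $\hat{X}_Q$ and $X_{5\hat{\Delta}_Q}$ are both Corkscrew points at scale $\approx \ell(Q)$, joined by a chain of uniformly bounded length), so by the hypothesis \eqref{eqn:scale-inva:converse-approx:wo-N} applied to $5\hat{\Delta}_Q$,
$$
\int_{5\hat{\Delta}_Q}\big(k^{\hat{X}_Q}\big)^q\,d\sigma\,\lesssim\,\int_{5\hat{\Delta}_Q}\big(k^{X_{5\hat{\Delta}_Q}}\big)^q\,d\sigma\,\lesssim\,\sigma(5\hat{\Delta}_Q)^{1-q}\,\approx\,\sigma(Q)^{1-q},
$$
which is the desired conclusion.

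The main point to check carefully is the CFMS-type estimate at the local scale $\delta(Y)$: this requires both the qualitative exterior Corkscrew condition (available since we work in $\Omega_N$ rather than $\Omega$) and the geometric fact that $\hat{X}_Q$ is separated from the relevant small surface ball by a distance $\gtrsim \ell(Q)$, which is exactly why $\hat{X}_Q$ was taken to lie outside $6\tilde{B}_Q$ in the construction of Section \ref{section-step2-Tb}.  Everything else is a combination of standard harmonic-analytic estimates (interior gradient bound, maximal function, Harnack) and the $L^q$ reverse H\"{o}lder hypothesis for the Poisson kernel of the approximating domain.
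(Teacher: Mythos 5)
Your argument is correct and follows essentially the same route as the paper: interior gradient/Caccioppoli estimate for harmonic functions, the CFMS-type bound of Lemma \ref{lemma2.cfms} at scale $\delta(Y)$ to pass from $G$ to $\omega^{\hat{X}_Q}$, domination of $\tilde{N}_{Q,*}u$ by a Hardy--Littlewood maximal function of $k^{\hat{X}_Q}\chi_{C\hat{\Delta}_Q}$, and then the $L^q$-boundedness of the maximal operator together with Harnack and \eqref{eqn:scale-inva:converse-approx:wo-N}. The only cosmetic differences (sup-norm gradient bound rather than an $L^2$-average plus Harnack, and switching the pole to $X_{C\hat{\Delta}_Q}$ after rather than before the maximal-function step) do not change the substance of the proof.
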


\begin{proof}
Let $x\in Q$ and $Y\in\tilde{\Gamma}_{Q}(x)$. By \eqref{condition:kappa0-cones-sawtooth} applied to $Q$ in place of $Q_0$ we have $\tilde{\Gamma}_{Q}(x)\subset B^{\kappa_0}_{Q}\cap\Omega$ and therefore $u$ is harmonic in $B(Y,3\,\delta(Y)/4)\subset \frac74B^{\kappa_0}_{Q}$. We then obtain by the mean value property of harmonic functions, Caccioppoli's inequality, the Harnack chain condition and Lemma \ref{lemma2.cfms}, that
\begin{multline*}
|u(Y)|
\le
\fint_{B(Y,\delta(Y)/2)} |\nabla_Z G(Z,\hat{X}_{Q})|\,dZ
\lesssim
\delta(Y)^{-1}\,
\left(
\fint_{B(Y,\frac34\delta(Y))} G(Z,\hat{X}_{Q})^2\,dZ
\right)^{\frac12}
\\
\lesssim
\frac{G(Y,\hat{X}_{Q})}{\delta(Y)}
\approx
\frac{G(X_{\Delta_Y},\hat{X}_{Q})}{\delta(Y)}
\approx
\frac{\omega^{\hat{X}_{Q}}( \Delta_Y)}{\delta(Y)^n}
\end{multline*}
where $\Delta_Y=\Delta(y,\delta(Y))$, with $y\in\pom$ such that $|Y-y|=\delta(Y)$. It is easy to see that we can find $C$ large enough so that $x\in\Delta(y, C\,\delta(Y))$ and $\Delta_Y\subset C\,\hat{\Delta}_{Q}$.  As usual, we let $X_{C\,\hat{\Delta}_{Q}}$ denote a Corkscrew point with respect to $C\,\hat{\Delta}_{Q}$.
Then,
\begin{multline*}
|u(Y)|
\lesssim
\frac1{\delta(Y)^n}\,\int_{\Delta_Y} k^{\hat{X}_{Q}}\,d\sigma
\le
\frac1{\delta(Y)^n}\,\int_{\Delta(y,C\,\delta(Y))} k^{\hat{X}_{Q}}\,\chi_{C\,\hat{\Delta}_{Q}}\,d\sigma
\\[4pt]\lesssim \, \frac1{\delta(Y)^n}\,
\int_{\Delta(y,C\,\delta(Y))} k^{X_{C\,\hat{\Delta}_{Q}}}\,\chi_{C\,\hat{\Delta}_{Q}}\,d\sigma
\,\lesssim\,
M(k^{X_{C\,\hat{\Delta}_{Q}}}\,\chi_{C\,\hat{\Delta}_{Q}})(x)\,,
\end{multline*}
where in the next-to-last inequality we have used the Harnack Chain condition.
Therefore,  using that $q=\tilde{p}$ and \eqref{eqn:scale-inva:converse-approx:wo-N}, we obtain
\begin{multline*}
\|\tilde{N}_{Q,*} u\|_{L^{q}(Q)}^q
\,\lesssim\,
\|M(k^{X_{C\,\hat{\Delta}_{Q}}}\,\chi_{C\,\hat{\Delta}_{Q}})\|_{L^{q}(Q)}^q
\,\lesssim\,
\int_{C\,\hat{\Delta}_{Q}} \left(k^{X_{C\,\hat{\Delta}_{Q}}}\right)^q\,d\sigma
\\[4pt]
\lesssim
\sigma(C\,\hat{\Delta}_{Q})^{1-q}
\lesssim
\sigma(Q)^{1-q}.
\end{multline*}
\end{proof}

Now we are ready to establish \eqref{desired-S-Green}. Given $Q\in\dd$, we first apply Proposition \ref{prop:good-lambda-truncated} (or else Proposition \ref{prop:good-lambda}) with $q$, with $Q$ in place of $Q_0$, and with $u = \partial_{Y_j} G(\cdot,\hat{X}_Q)$ as above, since
the latter is harmonic in $2B^{\kappa_0}_Q\cap\Omega$.  Then, using
Proposition \ref{prop:NT-Green}, we obtain as desired that
$$
\|S_{Q} u\|_{L^q(Q)}^q
\le
C\,\|\tilde{N}_{Q,*} u\|_{L^q(Q)}^q
\le
C\,\sigma(Q)^{1-q} \,.
$$

\subsection{UR for $\Omega$}\label{section:UR-general}
To conclude the proof of our main result we see that the UR property for the approximating domains $\Omega_N$ with uniform bounds passes to $\Omega$. As observed before, as a consequence of the $Tb$ theorem Theorem \ref{theor:Tb} we have obtained that \eqref{Tb-conclusion} holds
for $\Omega_N$ with uniform bounds. This in turn implies that $\pom_N$ is UR (see \cite[p. 44]{DS2}) with uniform bounds. To obtain that this property is preserved when passing to the limit we use an argument,
based on ideas of Guy David, along the lines of \cite[Appendix C]{HM-I} (indeed the present situation is easier) where we have to switch the roles of $\Omega$ and $\Omega_N$.

To show that $\Omega$ has the UR property we use the singular integral characterization. We recall that a closed, $n$-dimensional ADR set $E\subset \ree$ is UR if and only if for all singular kernels $K$ as below, and corresponding truncated singular integrals $T_\eps$,  we have that
\begin{equation}\label{UR-SIO}
\sup_{\eps>0}\int_{E} |T_\eps f|^2\,dH^n\leq C_K \int_E|f|^2\,dH^n.
\end{equation}
Here,
$$
T_{E,\eps}f(x)=T_{\eps} f(x):= \int_{E} K_\eps (x-y)\,f(y)\,dH^n(y)\,,
$$
where $K_\eps(x) := K (x)\,\Phi(|x|/\eps)$,  with $0\leq \Phi\leq 1$,
$\Phi(\rho)\equiv 1$ if $ \rho\geq 2,$ $\Phi(\rho) \equiv 0$ if $\rho\leq 1$, and $\Phi \in
C^\infty(\mathbb{R})$, and where
the singular kernel $K $ is an odd function, smooth on $\ree\setminus\{0\}$,
and satisfying
\begin{align}\label{K:size}
|K(x)|\,&\leq\, C\,|x|^{-n}
\\[4pt]\label{grad-K:size}
|\nabla^m K(x)|\,&\leq \,C_m\,|x|^{-n-m}\,,\qquad\forall m=1,2,3,\dots \,.
\end{align}

We also introduce the following extension of these operators
\begin{equation}\label{T:boundary-to-domain}
\T_{E} f(X):= \int_{E} K(X-y)\,f(y)\,dH^n(y)\,,\qquad X \in \ree\setminus E.
\end{equation}
We define non-tangential approach regions $\Upsilon^E_\tau(x)$ as follows.  Let
$\mathcal{W}_E$ denote the collection of cubes in the Whitney decomposition
of $\ree\setminus E$, and set $\mathcal{W}_\tau(x):= \{I\in \mathcal{W}_E: \dist(I,x) <\tau \ell(I)\}$.
We then define
$$
\Upsilon^E_\tau(x):=\bigcup_{I\in \mathcal{W}_\tau(x)} I^*
$$
(thus, roughly speaking, $\tau$ is the ``aperture'' of $\Upsilon^E_\tau(x)$).  For
$F\in \mathcal{C}(\ree\setminus E)$ we
may then also define the non-tangential maximal function
$$
N^E_{*,\tau} (F)(x):= \sup_{Y\in \Upsilon^E_\tau(x)}|F(Y)|.
$$

Let us recall \cite[Lemma C.5]{HM-I} which states that if $E\subset \ree$ is
$n$-dimensional UR, we then have
\begin{equation}\label{Non-tan:UR}
\int_E \left(N^E_{*,\tau}\left(\T_E f\right)\right)^2\, dH^n \,\leq\, C_{\tau,K} \int_E |f|^2 dH^n.
\end{equation}
for every $0<\tau<\infty$ and with $C_{\tau,K}$ depending only on $n$, $\tau$, $K$ and the UR constants.

After these preliminaries, in order to show that $\pom$ is UR we take one of the previous kernels $K$ and form the corresponding operators $T_\eps=T_{\pom, \eps}$. Fix $\eps>0$ and $N\ge 1$ such that $\eps\gg 2^{-N}$. We write $\pom=\cup_j Q_j$ with $Q_j\in\dd_N(\pom)$. For fixed $j$, if we write $\tilde{Q}_j$ for the dyadic parent of $Q_j$, we have by  construction that
$X_{\tilde{Q}_j}\in \interior(U_{\tilde{Q}_j})\subset \Omega_N$. Then in the segment joining $x_j$ (the center of $Q_j$) and $X_{\tilde{Q}_j}$ there exists a point $\hat{x}_j\in \pom_N$. Next we take $Q_j(N)$ the unique cube in $\dd_N(\pom_N)$ such that $\hat{x}_j\in Q_j(N)$. Note that we have $\dist(Q_j, Q_j(N))\approx 2^{-N}$. Since $\pom_N$ is ADR with uniform bounds in $N$, any
given $Q(N)\in \dd(\pom_N)$ can serve in
this way for at most a bounded number of $Q_j\in\dd(\pom)$. Thus we have
\begin{equation}\label{bded-overlap-cubes}
\sum_{Q_j\in\dd_N(\pom)} 1_{Q_j(N)}(x)\leq C\,,\qquad\forall x\in\pom_N.
\end{equation}

As usual, we set $\sigma:=H^n|_{\pom}$, and we now also let
$\sigma_N:=H^n|_{\pom_N}$.  For $\tau$ large enough, we have that if $x'\in Q_j(N)$ and $x\in Q_j$, then $x\in\Upsilon^{\,\pom_N}_\tau(x')$.
Thus,
\begin{multline*}
\int_{\partial\Omega} |\T_{\partial\Omega_N} f|^2\, d\sigma
=
\sum_{Q_j\in\dd_N(\pom)} \int_{Q_j} |\T_{\partial\Omega_N} f|^2\, d\sigma
\\
=
\sum_{Q_j\in\dd_N(\pom)} \frac1{\sigma_N(Q_j(N))}\int_{Q_j(N)}
\int_{Q_j} |\T_{\pom_N} f(x)|^2\, d\sigma(x)\,d\sigma_N(x')
\\
\lesssim \sum_{Q_j\in\dd_N(\pom)} \int_{Q_j(N)}
\left(N^{\pom_N}_{*,\tau}\left(\T_{\pom_N} f\right)\right)^2\, d\sigma_N
 \leq C_{\tau,K} \int_{\pom_N}
|f|^2 d\sigma_N\,,
\end{multline*}
    where in the last line we have used first the ADR properties of $\pom_N$ and
$\pom$, and then \eqref{bded-overlap-cubes} and \eqref{Non-tan:UR} with $E=\pom_N$ (as we may do,
since $\pom_N$ is UR with uniform bounds).
In particular we observe that $C_{\tau,K}$ is independent of $N$.

Thus we have shown that $\T_{\pom_N}:L^2(\pom_N)\to L^2(\partial\Omega)$.  Since the kernel
$K$ is odd, we therefore obtain by duality that
\begin{equation}\label{estimate-cal:T}
\T_{\pom }:L^2(\pom)\to L^2(\pom_N).
\end{equation}
Let us now observe that $K_\eps$ is odd, smooth away the origin and satisfies \eqref{K:size}, \eqref{grad-K:size} uniformly in $\eps$. Thus \eqref{estimate-cal:T} applies to the corresponding operator $\T_{\pom,\eps}$ defined by means of $K_\eps$, that is, we have $\T_{\pom,\eps}:L^2(\pom)\to L^2(\pom_N)$ with bounds that are uniform on $N$ and $\eps$. Then we proceed as in Case 2 of \cite[Appendix C]{HM-I} and write
\begin{align*}
&\int_{\partial\Omega} |T_{\pom,\eps} f|^2\, d\sigma
=
\sum_{Q_j\in\dd_N(\pom)} \int_{Q_j}|T_{\pom,\eps} f|^2\, d\sigma
\\
&
\quad=
\sum_{Q_j\in\dd_N(\pom)} \frac1{\sigma_N(Q_j(N))}\int_{Q_j(N)}
\int_{Q_j}|T_{\pom,\eps} f(x)|^2\, d\sigma(x)\,d\sigma_N(x')
\\
&
\quad\lesssim
\sum_{Q_j\in\dd_N(\pom)} \frac1{\sigma_N(Q_j(N))}\int_{Q_j(N)}
\int_{Q_j}|T_{\pom,\eps} f(x)-\T_{\pom,\eps} f(x')|^2\, d\sigma(x)\,d\sigma_N(x')
\\
&
\qquad\qquad+
\sum_{Q_j\in\dd_N(\pom)} \int_{Q_j(N)}
|\T_{\pom,\eps} f(x')|^2\,d\sigma_N(x'),
\\
&\quad=I+II,
\end{align*}
where we have use the ADR property for both $\pom$ and $\pom_N$. To estimate $II$ we use \eqref{bded-overlap-cubes} and \eqref{estimate-cal:T} applied to $\T_{\pom,\eps}$:
$$
II\lesssim
\int_{\pom_N}|\T_{\pom,\eps} f|^2\,d\sigma_N
\le
C_{\tau,K}\,\int_{\pom}|f|^2\,d\sigma.
$$
On the other hand, standard Calder\'{o}n-Zygmund arguments that are left to the interested reader give that \eqref{K:size} and \eqref{grad-K:size} imply
$$
|T_{\pom,\eps} f(x)-\T_{\pom,\eps} f(x')|
\lesssim
M_{\pom} f(x),
\qquad
x\in Q_j,\ x'\in Q_j(N),
$$
where $M_{\pom}$ is the Hardy-Littlewood maximal operator on $\pom$ and the constants are uniform in $\eps$ and $N$,
and where we have used that $2^{-N}\ll\eps$.
Hence,
$$
I
\lesssim
\sum_{Q_j\in\dd_N(\pom)}
\int_{Q_j} M_{\pom} f(x)^2\,d\sigma(x)
=
\int_{\pom}M_{\pom} f(x)^2\,d\sigma(x)
\lesssim
\int_{\pom}|f(x)|^2\,d\sigma(x).
$$
Gathering our estimates, we conclude that
$$
\int_{\partial\Omega} |T_{\pom,\eps} f|^2\, d\sigma
\lesssim
\int_{\pom}|f|^2\,d\sigma,
$$
where the implicit
constants are independent of $\eps$ and $N$, which gives at once that $\pom$ is UR.\qed

\begin{remark}
Gathering together this section with \cite[Appendix C]{HM-I} we conclude that $\pom$ is UR if and only if $\Omega_N$ is UR with uniform bounds for all $N\gg 1$: here  we have shown the right-to-left implication and the converse argument is given in \cite[Appendix C]{HM-I}.
\end{remark}

\end{document}